\title[Maxwell on Kerr]{Uniform energy bound and asymptotics for the Maxwell field on a slowly rotating Kerr black hole exterior}
\author[L. Andersson]{Lars Andersson}
\email{laan@aei.mpg.de}
\address{Albert Einstein Institute, Am M\"uhlenberg 1, D-14476 Potsdam,
Germany}
\author[P. Blue]{Pieter Blue}
\email{P.Blue@ed.ac.uk}
\address{The School of Mathematics and the Maxwell Institute, University
of Edinburgh,James Clerk Maxwell Building,
The King's Buildings,
Mayfield Road,
Edinburgh,
Scotland EH9 3JZ, UK}
\date{February 2014}
\newcommand{\Reals}{\mathbb{R}}
\newcommand{\Complex}{\mathbb{C}}
\newcommand{\Naturals}{\mathbb{N}}
\renewcommand{\Re}{\mathrm{Re}}
\renewcommand{\Im}{\mathrm{Im}}
\newcommand{\supp}{\mathrm{supp}}
\newcommand{\cf}{cf.\ }
\newcommand{\eg}{e.g.\ }
\newcommand{\ie}{i.e.\ }
\newcommand{\apriori}{{a priori}}
\theoremstyle{plain}
\newtheorem{theorem}{Theorem}[section]
\newtheorem{corollary}[theorem]{Corollary}
\newtheorem{lemma}[theorem]{Lemma}
\newtheorem{definition}[theorem]{Definition}
\newtheorem{prop}[theorem]{Proposition}
\newtheorem{remark}[theorem]{Remark}
\newcommand{\vecFont}[1]{{\mathbf{#1}}}
\newcommand{\vecX}{\vecFont{X}}
\newcommand{\vecY}{\vecFont{Y}}
\newcommand{\Tnormalised}{\hat{\vecFont{T}}_{\text{PNV}}}
\newcommand{\Rnormalised}{\hat{\vecFont{R}}}
\newcommand{\Hnormalised}{\hat{\vecFont{\Theta}}}
\newcommand{\Pnormalised}{\hat{\vecFont{\Phi}}_{\text{PNV}}}
\newcommand{\vecLPNV}{\vecFont{l}_{\text{PNV}}}
\newcommand{\vecNPNV}{\vecFont{n}_{\text{PNV}}}
\newcommand{\vecMPNV}{\vecFont{m}_{\text{PNV}}}
\newcommand{\vecMbPNV}{\bar{\vecFont{m}}_{\text{PNV}}}
\newcommand{\vecH}{\vecFont{\Theta}}
\newcommand{\vecPPNV}{\vecFont{\Phi}_{\text{PNV}}}
\newcommand{\vecTperp}{\vecFont{T}_{\perp}}
\newcommand{\pt}{\partial_t}
\newcommand{\pr}{\partial_r}
\newcommand{\ph}{\partial_\theta}
\newcommand{\pp}{\partial_\phi}
\newcommand{\pAng}{\not\!\nabla}
\newcommand{\vecTBlend}{\vecFont{T}_\chi}
\newcommand{\vecTPerp}{\vecFont{T}_\perp}
\newcommand{\fnBlend}{\chi}
\newcommand{\vecTPNV}{\vecFont{T}_\text{PNV}}
\newcommand{\omegaBlend}{\omega_\chi}
\newcommand{\omegaPerp}{\omega_\perp}
\newcommand{\omegaPNV}{\omega_\text{PNV}}
\newcommand{\rp}{r_+}
\newcommand{\KDelta}{\Delta}
\newcommand{\KSigma}{\Sigma}
\newcommand{\KPi}{\Pi}
\newcommand{\Kp}{p}
\newcommand{\potlL}{V_{L}}
\newcommand{\potlFI}{V_{FI}}
\newcommand{\CarterQ}{\hat{Q}}
\newcommand{\OpQ}{\mathcal{Q}}
\newcommand{\MaxF}{\textrm{F}}
\newcommand{\MaxFStatic}{\textrm{F}_{\text{stationary}}}
\newcommand{\MaxFTotal}{\textrm{F}_{\text{total}}}
\newcommand{\phip}{\phi_1}
\newcommand{\phiz}{\phi_0}
\newcommand{\phim}{\phi_{-1}}
\newcommand{\phipm}{\phi_{\pm1}}
\newcommand{\phii}{\phi_i}
\newcommand{\solu}{\Upsilon}
\newcommand{\solub}{\bar{\Upsilon}}
\newcommand{\soluStatic}{\Upsilon_{\text{stationary}}}
\newcommand{\soluTotal}{\Upsilon_{\text{total}}}
\newcommand{\Electric}{\vecFont{E}}
\newcommand{\Magnetic}{\vecFont{B}}
\newcommand{\ElectricStatic}{\vecFont{E}_{\text{stationary}}}
\newcommand{\MagneticStatic}{\vecFont{B}_{\text{stationary}}}
\newcommand{\ChargeConstant}{q}
\newcommand{\ChargeE}{q_E}
\newcommand{\ChargeB}{q_B}
\newcommand{\phiUglyp}{\phi_{NP,0}}
\newcommand{\phiUglyz}{\phi_{NP,1}}
\newcommand{\phiUglym}{\phi_{NP,2}}
\newcommand{\CKalpha}{\alpha}
\newcommand{\CKrho}{\rho}
\newcommand{\CKsigma}{\sigma}
\newcommand{\CKalphab}{\underline{\alpha}}
\newcommand{\CKBundlepm}{V_{\pm1}}
\newcommand{\EnergyCrudeF}{E_{}[\MaxF]}
\newcommand{\EnergyCrudeFI}{E_{\text{FI}}[\solu]}
\newcommand{\EnergyCrudeFArg}[1]{E_{}[#1]}
\newcommand{\EnergyCrudeFIArg}[1]{E_{\text{FI}}[#1]}
\newcommand{\BulkMorawetzF}{B_{\pm}}
\newcommand{\BulkFILowerOrder}{B_{0}}
\newcommand{\BulkFIOne}{B_1}
\newcommand{\BulkFITwo}{B_{2,0}}
\newcommand{\inthst}[1]{\int_{\hst{#1}}}
\newcommand{\intBulk}[1]{\int_0^{#1}\inthst{t}}
\newcommand{\intSSurface}{\int_{\TwoSfc}}
\newcommand{\di}{\mathrm{d}}
\newcommand{\diNormal}[2]{\vecFont{n}^{#1}\di^3\mu_{#2}}
\newcommand{\diThreeNaive}{r^2\di r\di\omega}
\newcommand{\diThreeNaiveFI}{\di r\di\omega}
\newcommand{\diomega}{\sin\theta\di\theta\di\phi}
\newcommand{\diFourNatural}{\di^4\mu}
\newcommand{\diFourFI}{\di^4\mu_{\mathrm{FI}}}
\newcommand{\diFourSpectral}{\di\tilde{\mu}_{\mathrm{FI}}}
\newcommand{\Hodge}{*}
\newcommand{\insertion}[1]{i_{#1}}
\newcommand{\LeviCivita}{\epsilon}
\newcommand{\Lie}{\mathcal{L}}
\newcommand{\epsilonMain}{\frac{|a|}{M}}
\newcommand{\epsilonSlowRotationIntro}{\bar{\epsilon}_a}
\newcommand{\ConstantIntro}{C}
\newcommand{\Exterior}{\mathcal{M}}
\newcommand{\BulkZone}[2]{\Omega[#1,#2]}
\newcommand{\hst}[1]{\Sigma_{#1}}
\newcommand{\hsr}[1]{\tilde{\Sigma}_{#1}}
\newcommand{\bif}{\hsr{\rp}}
\newcommand{\spacelikeinfty}{\hsr{\infty}}
\newcommand{\TwoSfc}{\mathcal{S}^2}
\newcommand{\Stwo}{\mathbb{S}^2}
\newcommand{\KgMetric}{g}
\newcommand{\gMetric}{g}
\newcommand{\rfar}{r_1}  
\newcommand{\chirfar}{\chi_{|r-3M|\geq\rfar}}
\newcommand{\MaxH}{H}
\newcommand{\psii}{\psi_i}
\newcommand{\psiz}{\psi_0}
\newcommand{\Lagrangian}{\mathrm{L}}
\newcommand{\EMS}{\mathrm{T}}
\newcommand{\GenMomentum}[1]{\mathrm{P}_{(#1)}}
\newcommand{\GenEnergy}[1]{\mathrm{E}_{#1}}
\newcommand{\GenBulk}[1]{\mathrm{Bulk}_#1}
\newcommand{\vecMorawetzF}{\vecFont{A}}
\newcommand{\fnMorawetzF}{f}
\newcommand{\PDmat}{\mathrm{P}}
\newcommand{\geodesice}{e}
\newcommand{\geodesiclz}{\ell_z}
\newcommand{\geodesicQ}{Q}
\newcommand{\DCurlyRT}{\tilde{\CurlyR}'}
\newcommand{\DDCurlyRTT}{\tilde{\tilde{\CurlyR}}''}
\newcommand{\fnMorawetzS}{\mathcal{F}}
\newcommand{\fnMorawetzSA}{f_{1}}
\newcommand{\fnMorawetzSB}{f_{2}}
\newcommand{\fnMorawetzSC}{\mathcal{F}_{3}} 
\newcommand{\fnMorawetzSAA}{f_{1,1}}
\newcommand{\fnMorawetzSAB}{f_{1,2}}
\newcommand{\fnMorawetzSBA}{f_{2,1}}
\newcommand{\fnMorawetzSBB}{f_{2,2}}
\newcommand{\chimid}{\chi_{\text{mid}}}
\newcommand{\fnMorawetzSCA}{\mathcal{F}_{3,1}}
\newcommand{\fnMorawetzSCB}{f_{3,2}}
\newcommand{\vecMorawetzBasic}{\vecFont{A}}
\newcommand{\fnqS}{q}
\newcommand{\spectralt}{e}
\newcommand{\spectralp}{\ell_z}
\newcommand{\spectralQ}{Q}
\newcommand{\epsilondtsquared}{\epsilon_{\pt^2}}
\newcommand{\CurlyR}{\mathcal{R}}
\newcommand{\rorbit}{r_{\text{root}}}
\newcommand{\FinalTime}{T}
\newcommand{\ChiT}{\tilde{\chi}_{[0,\FinalTime]}}
\newcommand{\soluCut}{\solu_{\chi}}
\newcommand{\soluCutb}{{\bar\solu}_{\chi}}
\newcommand{\soluPartialFourier}{\hat{\solu}}  
\newcommand{\spectralcollection}{k}
\newcommand{\soluS}{\tilde{\solu}}
\newcommand{\soluSb}{\bar{\tilde{\solu}}}
\newcommand{\spectralspace}{\mathcal{K}}
\newcommand{\fullspectralspace}{(\rp,\infty)\times\spectralspace}
\newcommand{\dispectral}{\di\spectralcollection}  
\newcommand{\potlFIz}{V_{FI,0}}
\newcommand{\ErrorSCut}{\tilde{J}_{\chi}}
\newcommand{\ErrorSIm}{\tilde{J}_{\Im}}
\newcommand{\EnergyS}{\mathrm{E}_{(\fnMorawetzS,\fnqS)}}
\newcommand{\BulkSMain}{\mathrm{Bulk}_{\mathrm{main}}}
\newcommand{\BulkSIm}{\mathrm{Bulk}_{\mathrm{Im}}}
\newcommand{\EMSSpec}{\mathrm{T}_{rr}}
\newcommand{\GenMomentumS}{\mathrm{P}_r}
\newcommand{\epsilondtsquaredMax}{\bar{\epsilon}_{\partial_t^2}}
\newcommand{\modspectralcollection}{|\spectralcollection|_{\epsilondtsquared}}
\newcounter{MyStep}
\newcommand{\StartStep}{\setcounter{MyStep}{0}}
\newcommand{\Step}[1]{\noindent\stepcounter{MyStep}\textbf{Step \theMyStep: #1:}}
\newcommand{\smallparameter}{\max\left(\frac{|a|}{\epsilondtsquared},\frac{\epsilondtsquared}{M}\right)}  
\newcommand{\Hardya}{\mathtt{a}}
\newcommand{\Hardyb}{\mathtt{b}}
\newcommand{\Hardyc}{\mathtt{c}}
\newcommand{\Hardyd}{\mathtt{d}}
\newcommand{\Hardyalpha}{\alpha}
\newcommand{\Hardybeta}{\beta}
\newcommand{\HardyX}{X}
\newcommand{\HardyY}{Y}
\newcommand{\HardyZ}{Z}
\newcommand{\HardyprCoeff}{A}
\newcommand{\HardyPotl}{V_\mathrm{Hardy}}
\newcommand{\HardyPotlRed}{W}
\newcommand{\HardysoluRed}{\psi}
\newcommand{\HardyODEsolu}{v}
\newcommand{\HardyvarRed}{x}
\newcommand{\partialHardy}{\partial_\HardyvarRed}
\newcommand{\smallparameterwithrfar}{\max\left(\frac{a}{\epsilondtsquared},\frac{a}{M},\frac{\epsilondtsquared}{M},\frac{\rfar}{M}\right)}
\newcommand{\fnMorawetzSApprox}{\mathcal{F}_{\text{approx}}}
\newcommand{\fnqSApprox}{q_{\text{approx}}}
\newcommand{\ExponentRefined}{1/2}
\newcommand{\ExponentRefinedk}{k/2}
\newcommand{\ExponentRefinedThree}{3/2}
\newcommand{\chiTworfar}{\chi_{|r-3M|<2\rfar}}
\newcommand{\rescaledr}{\modspectralcollection^{\ExponentRefined}M^{-1}(r-\rorbit)}
\newcommand{\modrescaledr}{\modspectralcollection^{\ExponentRefined}M^{-1}|r-\rorbit|}
\newcommand{\chiThreerfar}{\chi_{|r-3M|<3\rfar}}
\newcommand{\rfarratio}{\bar{\epsilon}_1}
\newcommand{\hypothesisA}{For $|a|/M$ sufficiently small as in definition \ref{def:asmall}}
\newcommand{\hypothesisB}{\hypothesisA}
\newcommand{\hypothesisC}{For any projection away from the orbiting null geodesics and $|a|/M$ sufficiently small as in definition \ref{def:anyprojectionaway}}
\newcommand{\hypothesisD}{For $\max(|a|/\epsilondtsquared,\epsilondtsquared/M)$ sufficiently small as in definition \ref{def:epsilondtsquaredalsosmall}}
\newcommand{\hypothesisE}{There is a projection away from the orbiting null geodesic such that for $\max(|a|/\epsilondtsquared,\epsilondtsquared/M)$ sufficiently small as in definition \ref{def:chooseprojection}}
\newcommand{\hypothesisF}{For any projection away from the orbiting null geodesics and $\max(|a|/\epsilondtsquared,\epsilondtsquared/M)$ sufficiently small as in definition \ref{def:anyprojectionawayalso}}
\newcommand{\hypothesisFMax}{if $\MaxF$ is a regular solution of the Maxwell equation \eqref{eq:Maxwell}}
\newcommand{\hypothesisFsolu}{if $\MaxF$ is a regular solution of the Maxwell equation \eqref{eq:Maxwell} and $\solu=\solu[\MaxF]$}
\newcommand{\hypothesisFsoluS}{if $\MaxF$ is a regular, charge-free solution of the Maxwell equation \eqref{eq:Maxwell}, $\solu=\solu[\MaxF]$,  $\FinalTime>0$, and $\soluS$ is the $\FinalTime$-spectral transform of $\solu$}
\newcommand{\hypothesisFsoluSNoCharge}{if $\MaxF$ is a regular, charge-free solution of the Maxwell equation \eqref{eq:Maxwell}, $\solu=\solu[\MaxF]$, $\FinalTime>0$, and $\soluS$ is the $\FinalTime$-spectral transform of $\solu$}
\begin{document}
\begin{abstract}
We consider the Maxwell equation in the exterior of a very slowly rotating Kerr black hole. For this system, we prove the boundedness of a positive definite energy on each hypersurface of constant $t$. We also prove the convergence of each solution to a stationary Coulomb solution. We separate a general solution into the charged, Coulomb part and the uncharged part. Convergence to the Coulomb solutions follows from the fact that the uncharged part satisfies a Morawetz estimate, \ie that a spatially localised energy density is integrable in time. For the unchanged part, we study both the full Maxwell equation and the Fackerell-Ipser equation for one component. To treat the Fackerell-Ipser equation, we use a Fourier transform in $t$. For the Fackerell-Ipser equation, we prove a refined Morawetz estimate that controls $3/2$ derivatives with no loss near the orbiting null geodesics. 
\end{abstract}

\maketitle

\tableofcontents


\section{Introduction}
For the Maxwell equation in the exterior of a slowly rotating Kerr black hole, this paper provides both a bound for a positive definite energy and a Morawetz estimate. The Kerr family of solutions to Einstein's equation is parameterised by a mass parameter $M$ and a rotation parameter $a$. For $|a|\leq M$, these solutions describe black hole space-times \cite{HawkingEllis}. We consider the very slowly rotating case $|a|\ll M$, so that we can use $|a|/M$ as a small parameter in a boot-strap argument. 

The exterior region of the Kerr space-time in Boyer-Lindquist coordinates $(t,r,\theta,\phi)=(t,r,\omega)$ is the region $\Exterior=\Reals\times(\rp,\infty)\times\Stwo$ with the Lorentzian metric
\begin{align}
\gMetric&=-\left(1-\frac{2Mr}{\KSigma}\right)\di t^2 -\frac{4aMr}{\KSigma}\di\phi\di t +\frac{\KPi}{\KSigma}\sin^2\theta\di\phi^2 +\KSigma\di\theta^2+\frac{\KSigma}{\KDelta}\di r^2 ,
\label{eq:KMetric}
\end{align}
where
\begin{align*}
\KDelta&=r^2-2Mr+a^2, &
\KSigma&=r^2+a^2\cos^2\theta =|\Kp|^2,\\
\KPi&=(r^2+a^2)^2-a^2\KDelta\sin^2\theta , \\
\potlL&=\frac{\KDelta}{(r^2+a^2)^2},&
\Kp&=r-ia\cos\theta  .
\end{align*}
The Maxwell equation for a two-form (antisymmetric tensor) $\MaxF\in\wedge^2(\Exterior)$ can be expressed in terms of differential forms or tensors as
\begin{subequations}
\label{eq:Maxwell}
\begin{align}
\di\Hodge\MaxF&=0,&\text{or}&&\nabla^\alpha\MaxF_{\alpha\beta}&= 0,\\
\di\MaxF&=0,&&&\nabla_{[\gamma}\MaxF_{\alpha\beta]}&= 0.
\end{align}
\end{subequations}
We consider real Maxwell fields. Although the exterior region of the
Kerr black hole can be analytically extended, it is globally
hyperbolic and foliated by Cauchy surfaces of constant $t$, which we
denote by $\hst{t}$. The Maxwell equation is hyperbolic. Thus, the
Maxwell equation with initial data given at $t=0$ forms a well posed
initial value problem.

The electric and magnetic components of the Maxwell field strength tensor are defined with respect to a unit, time-like vector. In section \ref{s:FurtherGeometryOfKerr}, we will consider many different vectors, but at this point, it is sufficient to consider
\begin{align*}
\Tnormalised&=\frac{r^2+a^2}{\sqrt{\KSigma\KDelta}}\pt +\frac{a}{\sqrt{\KSigma\KDelta}}\pp ,&
\Rnormalised&=\sqrt{\frac{\KDelta}{\KSigma}}\pr .
\end{align*}
The vector field $\Tnormalised$ is not orthogonal to the surfaces of constant $t$. Instead, it is chosen so that the vectors $\vecLPNV$ and $\vecNPNV$ in equation \eqref{eq:PNV} below are principle null vectors. 
From $\Tnormalised$, the electric and magnetic vectors are defined
by
\begin{align*}
\Electric&= -\insertion{\Tnormalised}\MaxF  &\Electric_\alpha&=\MaxF_{\alpha\beta}\Tnormalised^\beta ,\\
\Magnetic&= -\insertion{\Tnormalised}\Hodge\MaxF  &\Magnetic_\alpha&=\frac12 \LeviCivita_{\alpha\beta}{}^{\gamma\delta}\MaxF_{\gamma\delta}\Tnormalised^\beta .
\end{align*}
The electric and magnetic components are both orthogonal to $\Tnormalised$ and, hence, space-like. A complex combination of the radial components
\begin{align*}
\solu&=\solu[\MaxF]=\Kp\sqrt{2}\insertion{\Rnormalised}(\Electric+i\Magnetic)
=(r-ia\cos\theta)\sqrt{2}(\Electric_\alpha+i\Magnetic_\alpha)\Rnormalised^\alpha 
\end{align*}
satisfies the Fackerell-Ipser equation \cite{FackerellIpser}
\begin{align*}
(\nabla^\alpha\nabla_\alpha-\potlFI)\solu&=0 ,
\end{align*}
where $\potlFI=-2M\Kp^{-3}=-2M/(r-ia\cos\theta)^3$. The potential $\potlFI$ is complex, which makes it significantly more difficult to treat than a real-valued potential.

To state the first of our main results in a simple way, we introduce a non-negative energy for each of $\MaxF$, treated as a solution of the Maxwell equation, and $\solu$, treated as a solution of a wave-like equation, 
\begin{subequations}
\label{eq:EnergyCrudeDefn}
\begin{align}
\EnergyCrudeF(t)&=\inthst{t} \left(|\Electric|^2+|\Magnetic|^2\right) \diThreeNaive,  \\
\EnergyCrudeFI(t)&= \inthst{t} \left(\frac{r^2+a^2}{\KDelta}|\pt\solu|^2 +\frac{\KDelta}{r^2+a^2}|\pr\solu|^2 +\frac{|\pAng\solu|^2}{r^2} +\frac{1}{r^2}|\solu|^2\right) \diThreeNaive ,
\end{align}
\end{subequations}
where $\pAng$ is the differential operator mapping $C^\infty$ to $C^\infty\times C^\infty$ defined by $\pAng\solu=(\ph\solu,(\sin\theta)^{-1}\pp\solu)$, and $\di\omega=\sin\theta\di\theta\di\phi$. 

\begin{theorem}[Uniform energy bound and decay to Coulomb solution]
\label{thm:EnergyBound}
There are positive constants $C$ and $\epsilonSlowRotationIntro$ and, for each choice of $M$ and $a$, a two parameter family of $t$ independent Coulomb solutions such that if $M>0$, $a\in[-\epsilonSlowRotationIntro,\epsilonSlowRotationIntro]$, and $\MaxF$ is a regular\footnote{Regular is defined in subsection \ref{ss:RegularityConditions}.} solution of the Maxwell equation \eqref{eq:Maxwell}, then $\forall t>0$
\begin{align*}
\EnergyCrudeF(t)+\EnergyCrudeFI(t)
&\leq \ConstantIntro\left(\EnergyCrudeF(0)+\EnergyCrudeFI(0)\right) . 
\end{align*}
Furthermore, for each $\MaxF$, there are parameters $\ChargeE$ and $\ChargeB$ determining a Coulomb solution, $\MaxFStatic$, with electric and magnetic vectors $\ElectricStatic$ and $\MagneticStatic$, such that
\begin{align*}
\int_{\Exterior} \frac{|\Electric-\ElectricStatic|^2+|\Magnetic-\MagneticStatic|^2}{r^2} \diThreeNaive\di t 
&\leq \ConstantIntro\left(\EnergyCrudeF(0)+\EnergyCrudeFI(0)\right) .
\end{align*}
More detailed space-time integral estimates are given in the Morawetz estimate \ref{thm:Morawetz}. 
\end{theorem}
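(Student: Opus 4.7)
The plan is to first isolate the stationary Coulomb part of $\MaxF$, then prove the energy bound and the local space-time decay for the uncharged remainder by combining a vector-field energy estimate with a Morawetz estimate for the Fackerell--Ipser scalar $\solu$. I expect the difficulty to concentrate on the Morawetz estimate at the photon sphere.

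First I would define the electric and magnetic charges
\begin{align*}
\ChargeE &= \frac{1}{4\pi}\int_{S} \Hodge\MaxF, &
\ChargeB &= \frac{1}{4\pi}\int_{S} \MaxF,
\end{align*}
on any topological sphere $S\subset\hst{t}$ enclosing the horizon. The Maxwell equations \eqref{eq:Maxwell} make $\di\MaxF=0$ and $\di\Hodge\MaxF=0$, so these numbers are independent of $S$ and of $t$. They select a unique spherically symmetric stationary Coulomb solution $\MaxFStatic$ with electric and magnetic vectors $\ElectricStatic,\MagneticStatic$. The difference $\MaxFTotal=\MaxF-\MaxFStatic$ is then charge-free, and the Fackerell--Ipser scalar $\solu[\MaxFTotal]$ has no $\ell=0$ spherical content, which is essential because the monopole mode would be trapped exactly at the photon sphere.

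Next, I would prove the uniform energy bound using a blended timelike multiplier $\vecTBlend$ that agrees with $\pt$ for $r$ larger than a fixed multiple of $M$ and with a linear combination $\pt+\omega\pp$ for $r$ close to $\rp$ chosen to remain timelike through the ergoregion. Contracting $\vecTBlend$ with the Maxwell stress-energy produces a boundary energy on $\hst{t}$ coercive over $\EnergyCrudeF(t)$ and a bulk error $\sim\int\di^{(\vecTBlend)}\pi\cdot \EMS$ which is supported in the blending region and carries a factor $|a|/M$ from the failure of $\vecTBlend$ to be Killing. The analogous calculation applied to the wave-like equation $(\nabla^\alpha\nabla_\alpha-\potlFI)\solu=0$ controls $\EnergyCrudeFI(t)$, with the complex potential $\potlFI=-2M(r-ia\cos\theta)^{-3}$ contributing only lower-order, decaying terms that can be absorbed once a Morawetz estimate is in hand. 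A bootstrap in $|a|/M$ then closes the bulk errors into a Morawetz space-time norm.

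The Morawetz estimate is the core analytic step. For the charge-free $\solu[\MaxFTotal]$ one constructs a multiplier $\vecMorawetzBasic=\fnMorawetzF(r)\pr$ together with a zeroth-order correction $\fnqS$ such that, after symmetrizing and integrating by parts, the bulk integrand is positive-definite in $|\pr\solu|^2$, $|\pAng\solu|^2$ and $|\solu|^2$, and degenerates to second order at the trapped radius $r_{\text{root}}$ of the orbiting null geodesics. Because $\MaxFTotal$ is charge-free, the $\ell=0$ obstruction at trapping is absent, and for $|a|/M$ small the band of trapped radii in Kerr is close enough to the Schwarzschild value $r=3M$ that the Schwarzschild multiplier still gives a positive bulk up to errors $O(|a|/M)$ that are absorbed by the same estimate. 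Combined with the energy identity above, this yields the Morawetz estimate \ref{thm:Morawetz}, and in particular controls
\begin{align*}
\int_\Exterior r^{-2}\bigl(|\Electric-\ElectricStatic|^2+|\Magnetic-\MagneticStatic|^2\bigr)\diThreeNaive\di t
\end{align*}
by $\EnergyCrudeF(0)+\EnergyCrudeFI(0)$, which is the second assertion of the theorem.

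The principal obstacle is the interplay of three features at the photon sphere: trapping, the complex-valued potential $\potlFI$, and the loss of separability on rotating Kerr. A standard Morawetz multiplier degenerating at $r_{\text{root}}$ loses half a derivative there, which is too weak to absorb the $|a|/M$ errors from the commutator of $\vecTBlend$ with the generator of time translations. The remedy, and the most delicate part of the paper, is to prove a refined Morawetz estimate controlling $3/2$ derivatives without loss near trapping. I would obtain it by cutting off in time, Fourier transforming in $t$, and performing a spectral WKB-type analysis of the separated radial equation near $r_{\text{root}}$, exploiting the parameter $\epsilondtsquared$ introduced for this purpose. The uncharged decomposition, the blended-energy identity, the refined Morawetz estimate, and the bootstrap in $|a|/M$ together close to give both conclusions of Theorem \ref{thm:EnergyBound}.
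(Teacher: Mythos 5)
Your proposal follows the same overall route as the paper: decompose off the stationary Coulomb part using charge conservation, bound the Maxwell and Fackerell--Ipser energies with the blended timelike multiplier $\vecTBlend$, close the bulk errors via a Morawetz estimate for the uncharged part, obtain the Morawetz estimate by Fourier--spectral multipliers gaining $3/2$ derivatives near trapping, and close with a bootstrap in $|a|/M$. This matches the paper's decomposition into proposition \ref{prop:ChargeDecomposition} and the five core estimates \eqref{eq:CoreI}--\eqref{eq:CoreV}.

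One conceptual misattribution is worth flagging because it is what actually makes the $3/2$-derivative refinement necessary. You say the half-derivative-losing Morawetz multiplier is ``too weak to absorb the $|a|/M$ errors from the commutator of $\vecTBlend$ with the generator of time translations.'' That is not the source of the difficulty. The blending of $\vecTBlend$ is supported on $r\in[10M,11M]$, placed deliberately far from trapping at $r\approx 3M$, so its deformation-tensor errors are handled by the basic Morawetz bulk (core estimate \eqref{eq:CoreIV}), which degenerates only near $3M$. The actual culprit is the imaginary part of $\potlFI$: it decays like $r^{-4}$ and is \emph{not} supported away from trapping, and its contribution to the pseudo-energy divergence, $\nabla^\alpha\EMS_{\alpha\beta}=(\Im\potlFI)\Im(\solub\nabla_\beta\solu)+\dots$, produces the first-order term $\Im(\solub\pt\solu)$ near $r=3M$ that the bulk quantity $\BulkFIOne$ is designed to control, and for which the $\modspectralcollection^{3/2}$ gain is exactly the mechanism (core estimate \eqref{eq:CoreV}). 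A smaller imprecision: in Kerr the charge-free condition does not kill the $\ell=0$ mode of $\phiz$; rather, the spherical mean of $\phiz$ is bounded by $|a|\sqrt{\potlL}$ times the extreme components $\phipm$ (lemma \ref{lemma:LowerBoundForAngularDerivatives}), so the Fackerell--Ipser Morawetz argument must carry $a^2|\phipm|^2$ error terms through a Hardy-type lower bound and absorb them into the Maxwell Morawetz bulk $\BulkMorawetzF$.
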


One can reasonably argue that the energies in theorem \ref{thm:EnergyBound} suffer from two flaws. First, they are \textit{ad hoc}, rather than geometrically defined using the vector-field method. Second, the energies in theorem \ref{thm:EnergyBound} are degenerate with respect to the naturally induced energies on each $\hst{t}$. Regarding the first point, we have chosen to state the results in this form for simplicity, and, in fact, the proof of the theorem relies on showing that these energies are equivalent to geometrically defined energies. The geometric energies that we use are constructed from a vector field that vanishes as $r\rightarrow\rp$ on surfaces of constant $t$. It is for this reason that the energies are degenerate. The degeneracy of these energies is somewhat concealed in equation \eqref{eq:EnergyCrudeDefn}, since it might not be immediately obvious to the reader that the $1$-form $\di r$ degenerates as $r\rightarrow\rp$ (\cf remark \ref{rem:EnergyDegeneracy}), and hence so does the volume integrals $\diThreeNaive$. The coefficients of the terms in $\EnergyCrudeFI$ are exactly such that they compensate for the behaviour as $r\rightarrow\rp$ of the corresponding vector fields, and there is no further degeneracy, other than the one arising from the degeneracy of the volume form. Finally, in response to the second point, we refer to a powerful result in \cite{DafermosRodnianski:LectureNotes}, which explains how this type of degeneracy can be removed for a very general class of matter models in general relativity, including solutions of the Maxwell system. 

To prove a uniform bound on the energy, it is important to simultaneously prove some form of decay estimate. For many hyperbolic PDEs, there is a positive, conserved energy arising from a time translation symmetry and Noether's theorem. Although the Kerr space-time admits a time translation symmetry generated by $\pt$, because the coefficient of $\di t^2$ in the Kerr metric, $-1+2Mr/\KSigma$, changes sign, the vector field $\pt$ is not globally time-like in the exterior region, and the energy it generates is not necessarily positive. As in previous work for the wave equation on the Kerr space-time \cite{AnderssonBlue,DafermosRodnianski:KerrBoundedness,TataruTohaneanu}, we prove a Morawetz estimate, or integrated local energy decay estimate, for the Maxwell field. However, there is a major obstacle to proving such estimates. 

There are charged solutions of the Maxwell equation in the exterior of the Kerr black hole that do not decay in time. The magnetic and electric charges of a Maxwell field are  $(4\pi)^{-1}\intSSurface \MaxF$ and $(4\pi)^{-1}\intSSurface\Hodge\MaxF$. From the Maxwell equations, these are the same on any pair of $2$-surfaces that can be deformed to each other. In Minkowski space, $\Reals^{3+1}$, any $2$-surface can be deformed to a point, so that in the absence of charged sources, all solutions of the Maxwell equation are uncharged. However, this is not the case in the exterior of a Kerr black hole. These solutions are independent of time, so they cannot decay. In treating charged solutions, we decompose solutions into a stationary, charged part and an uncharged, dynamical part. The charged part is also called the Coulomb part. 

\begin{prop}[Decomposition into charged and uncharged parts]
\label{prop:ChargeDecomposition}
There are positive constants $C$ and $\epsilonSlowRotationIntro$ such that if  $M>0$, $a\in[-\epsilonSlowRotationIntro,\epsilonSlowRotationIntro]$, and $\MaxFTotal$ is a regular solution of the Maxwell system \eqref{eq:Maxwell}, then there are $\MaxFStatic$ and $\MaxF$ such that
\begin{enumerate}
\item $\MaxFTotal=\MaxFStatic+\MaxF$, 
\item $\MaxFStatic$ and $\MaxF$ are solutions of the Maxwell system \eqref{eq:Maxwell}, 
\item the Lie derivative along $\pt$ of $\MaxFStatic$ vanishes: $\Lie_{\pt}\MaxFStatic=0$, 
\item for any closed $2$-surface, $\TwoSfc$, the surface integrals of $\MaxF$ and of $\Hodge\MaxF$ vanish: $\intSSurface \MaxF=0=\intSSurface\Hodge\MaxF$, and
\item there is a constant $\ConstantIntro$ such that
\begin{align*}
\ConstantIntro^{-1} \EnergyCrudeFArg{\MaxFTotal}
&\leq \left(\EnergyCrudeFArg{\MaxFStatic}+\EnergyCrudeFArg{\MaxF}\right)
\leq \ConstantIntro \EnergyCrudeFArg{\MaxFTotal} ,\\
\ConstantIntro^{-1} \EnergyCrudeFIArg{\soluTotal}
&\leq \left(\EnergyCrudeFIArg{\soluStatic}+\EnergyCrudeFIArg{\solu}\right)
\leq \ConstantIntro \EnergyCrudeFIArg{\soluTotal} ,
\end{align*}
where $\soluTotal$, $\soluStatic$, and $\solu$ denote the radial components of $\MaxFTotal$, $\MaxFStatic$, and $\MaxFTotal$ respectively. 
\end{enumerate}
\end{prop}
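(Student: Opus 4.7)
The plan is to read off the electric and magnetic charges of $\MaxFTotal$ from surface integrals, use them to pick out an explicit stationary Coulomb representative $\MaxFStatic$, and then show by a trace-type inequality that the charges (and hence the Coulomb part) are controlled by the total energy. First I would define
\begin{align*}
\ChargeE &= \frac{1}{4\pi}\intSSurface \Hodge\MaxFTotal, &
\ChargeB &= \frac{1}{4\pi}\intSSurface \MaxFTotal,
\end{align*}
on any $2$-sphere $\TwoSfc = \{t = t_0,\, r = r_0\}$. Since $\di\MaxFTotal = 0 = \di\Hodge\MaxFTotal$, Stokes' theorem makes these integrals independent of $(t_0,r_0)$, so they are well-defined invariants of $\MaxFTotal$. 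I would then construct a two-parameter family $\MaxFStatic$ of stationary Coulomb fields realising these charges, namely the unique solution of \eqref{eq:Maxwell} that is axisymmetric and $t$-independent, decays at spatial infinity, and carries the prescribed charges; this is the Maxwell field of the Kerr--Newman solution pulled back to the Kerr background. It depends smoothly on $a$ and satisfies $|\ElectricStatic|^2+|\MagneticStatic|^2 \leq \ConstantIntro(|\ChargeE|^2+|\ChargeB|^2)/r^4$, so both $\EnergyCrudeFArg{\MaxFStatic}$ and $\EnergyCrudeFIArg{\soluStatic}$ are comparable to $|\ChargeE|^2+|\ChargeB|^2$.

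Setting $\MaxF = \MaxFTotal - \MaxFStatic$, properties (1)--(4) are immediate: linearity gives $\MaxF$ as a Maxwell solution with $\Lie_{\pt}\MaxFStatic = 0$, and the charge integrals subtract off by construction. The essential remaining ingredient for (5) is the reverse trace-type bound
\begin{align*}
|\ChargeE|^2 + |\ChargeB|^2 &\leq \ConstantIntro\, \EnergyCrudeFArg{\MaxFTotal}.
\end{align*}
To obtain it, I would write $\ChargeE$ as a surface integral of the radial component of $\Electric$ over $\{t=t_0,\,r=r_0\}$; Cauchy--Schwarz then gives $|\ChargeE|^2 \leq \ConstantIntro\int_{\Stwo}|\Electric|^2\, r_0^2\di\omega$ for each $r_0$. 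Since the left-hand side is independent of $r_0$, integrating this pointwise-in-$r$ inequality against a bump function with compact support in $(\rp,\infty)$ recognises the right-hand side as a piece of $\EnergyCrudeFArg{\MaxFTotal}$. The same argument with $\Magnetic$ handles $\ChargeB$, and for $\EnergyCrudeFIArg{\soluTotal}$ one uses that $\solu$ encodes $\Electric_\alpha\Rnormalised^\alpha + i\Magnetic_\alpha\Rnormalised^\alpha$ up to the nonvanishing weight $\Kp$, so the same surface integral argument applies to $\solu$ directly.

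The main obstacle is choosing the cutoff in $r$: the volume form $\diThreeNaive$ degenerates at $r = \rp$, as discussed after Theorem \ref{thm:EnergyBound}, and the Coulomb field is only integrable at infinity through the weight $r^2$, so one must pick a cutoff supported in a fixed compact annulus bounded away from both ends to obtain a clean estimate; the fact that $\ChargeE$ is $r_0$-independent is what makes this workable. Once the trace inequality is in place, the triangle inequalities $\EnergyCrudeFArg{\MaxF} \leq 2(\EnergyCrudeFArg{\MaxFTotal}+\EnergyCrudeFArg{\MaxFStatic})$ and $\EnergyCrudeFArg{\MaxFTotal}\leq 2(\EnergyCrudeFArg{\MaxF}+\EnergyCrudeFArg{\MaxFStatic})$, combined with $\EnergyCrudeFArg{\MaxFStatic}\sim |\ChargeE|^2+|\ChargeB|^2 \leq \ConstantIntro\EnergyCrudeFArg{\MaxFTotal}$, yield both inequalities in (5), and the Fackerell--Ipser version follows identically on $\solu$.
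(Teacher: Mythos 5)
Your proof of items (1)--(4) follows the paper's: define the charges as surface integrals, which are well defined by Stokes' theorem since $\MaxF$ and $\Hodge\MaxF$ are closed; introduce the explicit two-parameter Coulomb family (the paper writes it as $\phiz=\ChargeConstant/\Kp^2$, $\phipm=0$ with $\ChargeConstant=\ChargeE+i\ChargeB$, which is the Kerr--Newman potential you describe); and subtract. For item (5), however, you take a genuinely different and more elementary route. The paper realises $\EnergyCrudeF+\EnergyCrudeFI$ as the diagonal of a bilinear form $\langle\cdot,\cdot\rangle_{\hst{t}}$, expands $\langle\MaxFTotal,\MaxFTotal\rangle$ into diagonal terms plus $2\Re\langle\MaxFStatic,\MaxF\rangle$, and then estimates the cross term as $O(|a|/M)$ times the geometric mean of the stationary and charge-free energies, using the charge-free condition on $\MaxF$ to kill the leading part of the inner product. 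This yields the stronger ``$a$-almost equivalent'' conclusion --- the decomposition is orthogonal to leading order in $|a|/M$ --- at the cost of a somewhat delicate cancellation computation. Your argument instead proves the trace-type bound $|\ChargeE|^2+|\ChargeB|^2\lesssim M\,\EnergyCrudeFArg{\MaxFTotal}$ (and the analogous bound by $\EnergyCrudeFIArg{\soluTotal}$, via its zeroth-order term $|\solu|^2/r^2$) by Cauchy--Schwarz on a fixed sphere $\TwoSfc(t_0,r_0)$ and then exploiting the $r_0$-independence of the charge to average against a bump function supported in a compact annulus away from $\rp$ and $\infty$; combined with $\EnergyCrudeFArg{\MaxFStatic}\sim\EnergyCrudeFIArg{\soluStatic}\sim(|\ChargeE|^2+|\ChargeB|^2)/M$ and the parallelogram inequality $E[F\pm G]\leq 2E[F]+2E[G]$ for the quadratic energies, this gives item (5) with an absolute constant. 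You lose the $1+O(|a|/M)$ sharpness of the equivalence but gain a shorter argument that bypasses the cross-term estimate entirely and, incidentally, works for all $|a|<M$ rather than only for slow rotation.
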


For the remainder of this paper, we will typically use $\MaxF$ to denote an uncharged solution or the uncharged part of a solution $\MaxFTotal$. 


To investigate the decay properties of solutions of the Maxwell system, it is useful to perform a null or spinor decomposition. To do so, first we introduce the null vectors 
\begin{align}
\vecLPNV&=\frac{1}{\sqrt{2}}(\Tnormalised+\Rnormalised),&
\vecNPNV&=\frac{1}{\sqrt{2}}(\Tnormalised-\Rnormalised),
\label{eq:PNV}
\end{align}
and the unit vectors
\begin{align*}
\Hnormalised&=\frac{1}{\sqrt{\KSigma}}\ph,&
\Pnormalised&=\frac{1}{\sqrt{\KSigma}}\left(a\sin\theta\pt+\frac{1}{\sin\theta}\pp\right) .
\end{align*}
The vectors $\vecLPNV$ and $\vecNPNV$ are each double solutions of the
principal null vector equation \cite{Wald}.  This allows us to consider
the complex null tetrads
\begin{align}
\label{eq:TetradForCharge}
\vecLPNV,&&
\vecNPNV,&&
\vecMPNV&=\frac{1}{\sqrt{2}}\left(\Hnormalised+i\Pnormalised\right),&
\vecMbPNV .
\end{align}
This is known as the Carter tetrad \cite{Znajek}. The (complex) spinor components of
the Maxwell field are
\begin{align*}
\phip&=2\MaxF[\vecLPNV,\vecMPNV] ,\\
\phiz&=\sqrt{2}\left(\MaxF[\vecLPNV,\vecNPNV]+ \MaxF[\Hnormalised,\Pnormalised]\right),&\solu&=(r-ia\cos\theta)\phiz\\
\phim&=2\MaxF[\vecNPNV,\vecMbPNV] . 
\end{align*}
Here, the components are indexed by spin weight, following the convention of Price \cite{Price} and used in \cite{Blue:Maxwell}. This differs from the indexing convention used by almost the entire rest of the physical literature, in which, ignoring powers of $\sqrt{2}$, the quantities $\phim$, $\phiz$, and $\phip$ are denoted $-\phiUglym$, $\phiUglyz$, and $\phiUglyp$. This also differs from the notation used in the precursor to the proof of nonlinear stability of Minkowski space \cite{ChristodoulouKlainerman:DecayOfLinearFields}, where, after the appropriate vector bundles have been identified and powers of $\sqrt{2}$ have again be ignored, the quantities $\phim$, $\phiz$, and $\phip$ are denoted $\CKalphab$, $\CKrho+i\CKsigma$, and $\CKalpha$, and where spin weight is called signature \cite{ChristodoulouKlainerman:NonlinearStabilityOfMinkowski}. In subsection \ref{ss:RegularityOfTheNullDecomposition}, the null components $\phii$ are identitified with sections of vector bundles, so that if $\MaxF$ is smooth, then the corresponding sections will also be smooth. The unusual factors of $\sqrt{2}$ are chosen to obtain convenient factors in lemma \ref{lemma:MaxFEMSComponents}.

A Morawetz or integrated local energy decay estimate provides a bound
on weighted space-time integrals of the components of the Maxwell
field. One major obstacle to proving such estimates is the existence
of orbiting null geodesics. In the Schwarzschild $a=0$ case, these
occur at $r=3M$. In the slowly rotating case $|a|\ll M$, the orbiting
null geodesics remain near $r=3M$. To avoid these orbiting null
geodesics and the necessary degeneracy in the Morawetz estimate near
them, we introduce a distance $\rfar$ and a smooth cut-off $\chirfar$,
which is identically $1$ for $|r-3M|>2\rfar$, identically $0$ for
$|r-3M|<\rfar$, monotone in between, and is such that, for all
$k\in\Naturals:$ the derivative $\pr^k\chirfar$ is bounded by a
constant times $\rfar^{-1}$. We introduce a time $\FinalTime>0$ at
which we wish to estimate the energy. For a charge-free Maxwell field
with components $\phii$ and $\solu=\solu[\MaxF]=\Kp\phiz$, we define the following bulk space-time integrals
\begin{subequations}
\begin{align}
\BulkMorawetzF
&=\intBulk{\FinalTime}\frac{M\KDelta}{(r^2+a^2)^2} |\phipm|^2 \diFourNatural ,\\
\BulkFILowerOrder
&=\intBulk{\FinalTime} \frac{M|\phiz|^2}{r^2}\diFourNatural  
=\intBulk{\FinalTime} \frac{M}{r^4}|\solu|^2r^2\diFourFI ,\\
\BulkFITwo
&=\intBulk{\FinalTime} \left(\frac{M\KDelta^2}{(r^2+a^2)r^2}|\pr\solu|^2 +\chirfar\frac{M^2|\pt \solu|^2+|\pAng\solu|^2}{r}\right)\diFourFI, \\
\BulkFIOne
&=\int_{\rp}^{\infty} (1-\chirfar)
\left|\int_{0}^{\FinalTime}\int_{\Stwo} \Im(\solub\pt\solu) \diomega\di t\right| \di r ,
\end{align}
\end{subequations}
where $\diFourNatural$ is the geometrically defined volume form $\KSigma\diomega\di r\di t$, and $\diFourFI$ is the coordinate volume form $\diomega\di r\di t$. For $T<0$, we reverse the sign in these bulk terms, so that they remain nonnegative. The indexing is chosen so that $\BulkMorawetzF$ involves $\phipm$, $\BulkFILowerOrder$ involves $\phiz$ or, equivalently, $\solu$ with no derivatives, $\BulkFIOne$ involves $\solu$ with one derivative in the integral, and $\BulkFITwo$ involves $\solu$ with two derivatives but with a degeneracy near the orbiting null geodesics. 

\begin{theorem}[Space-time integrated local energy (Morawetz) estimate]
\label{thm:Morawetz}
There are positive constants $\epsilonSlowRotationIntro$, $\ConstantIntro$, such
that 
if 
$\epsilonMain\leq\epsilonSlowRotationIntro$, 
$\MaxFTotal$ is a regular solution of the Maxwell equation \eqref{eq:Maxwell} for which $\EnergyCrudeFArg{\MaxFTotal}(0)$ and $\EnergyCrudeFIArg{\soluTotal}(0)$ are finite, 
and the quantities $\BulkMorawetzF$, $\BulkFILowerOrder$, $\BulkFIOne$, and $\BulkFITwo$ are defined in terms of the uncharged part $\MaxF$, 
then $\forall T\in\Reals:$
\begin{align}
\label{eq:BulkTermsDef}
\BulkMorawetzF
+\BulkFILowerOrder
+\BulkFIOne
+\BulkFITwo
&\leq \ConstantIntro\left(\EnergyCrudeFArg{\MaxFTotal}(0) + \EnergyCrudeFIArg{\soluTotal}(0)\right)   .
\end{align}
\end{theorem}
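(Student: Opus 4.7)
The plan is to reduce via Proposition \ref{prop:ChargeDecomposition} to the case where $\MaxF$ itself is uncharged, since the bulk quantities on the left-hand side are already defined in terms of the uncharged part; the stationary Coulomb piece $\MaxFStatic$ is $\pt$-invariant and is used only on the right-hand side through the energy equivalence (5) of Proposition \ref{prop:ChargeDecomposition}. For charge-free $\MaxF$ I would work in the spin-weighted decomposition $(\phim,\phiz,\phip)$ of the Carter tetrad \eqref{eq:TetradForCharge}: the extreme components $\phipm$ produce $\BulkMorawetzF$, while the middle component $\solu=\Kp\phiz$, satisfying the Fackerell--Ipser equation $(\Box-\potlFI)\solu=0$ with the complex potential $\potlFI=-2M\Kp^{-3}$, produces $\BulkFILowerOrder$, $\BulkFIOne$ and $\BulkFITwo$. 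Each bulk will come from its own Morawetz-type multiplier identity, and the three identities will be coupled only by error terms carrying an explicit factor of $|a|/M$.

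For the extreme components I would use radial multipliers $X_\pm=f_\pm(r)\pr+q_\pm(r)$ applied to the spin-weighted wave equations satisfied by $\phipm$, with $f_\pm(\rp)=0$ so that the horizon flux vanishes and $f_\pm$ signed so that the deformation current is positive with the weight $M\KDelta/(r^2+a^2)^2$. A distinctive feature of Maxwell is that, already on Schwarzschild, the $\phipm$ equations admit such an estimate without any degeneration at the photon sphere, which is why $\BulkMorawetzF$ carries no $\chirfar$ cutoff. Boundary fluxes at $t=0,\FinalTime$ are controlled by $\EnergyCrudeFArg{\MaxF}$ via Theorem \ref{thm:EnergyBound}, and the Kerr corrections to this calculation are absorbed into the main bulk through the smallness assumption $\epsilonMain\leq\epsilonSlowRotationIntro$.

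For $\solu$ the situation is harder because of trapping at the orbiting null geodesics and the complex potential. On $\supp\chirfar$, away from the photon sphere, a standard scalar-wave Morawetz multiplier $(\fnMorawetzS,\fnqS)$ produces the $\BulkFITwo$-type derivative bulk directly. Near the photon sphere that choice fails because $\fnMorawetzS'$ must vanish at the orbiting geodesic; to handle this I would cut $\solu$ in time as $\soluCut=\ChiT\solu$, localise to the trapping region via $\chiTworfar$, and take the $\FinalTime$-spectral transform in $t$ together with a mode decomposition in $\phi$ and in the angular Carter operator, reducing Fackerell--Ipser to an ODE in $r$ for each $\spectralcollection=(\spectralt,\spectralp,\spectralQ)$. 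For each $\spectralcollection$ I would build a refined multiplier $(\fnMorawetzSApprox,\fnqSApprox)$ whose first derivative vanishes at the $\spectralcollection$-dependent orbit radius $\rorbit(\spectralcollection)$, producing a bulk that controls $3/2$ derivatives at the orbit, as advertised in the abstract. Reassembling over $\spectralcollection$ and using smallness of $|a|/\epsilondtsquared$ and $\epsilondtsquared/M$ to absorb the Kerr deformation of the Schwarzschild orbit location recovers the non-degenerate part of $\BulkFITwo$; the term $\BulkFILowerOrder$ is then recovered from $\BulkFITwo$ by a Hardy inequality with weight vanishing at $r=\rp$.

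The complex part of $\potlFI$ produces, after the multiplier calculation, an error proportional to $\ErrorSIm\sim\Im(\solub\pt\solu)$ which is neither sign-definite nor manifestly integrable in time; on $\supp(1-\chirfar)$ it is exactly the term controlled by $\BulkFIOne$ as written, while on $\supp\chirfar$ a single integration by parts in $t$ against the $[0,\FinalTime]$ cutoff transfers it to a boundary contribution bounded, via Theorem \ref{thm:EnergyBound}, by the right-hand side together with a piece absorbable into the main bulks. The main obstacle is the simultaneous handling of trapping and of the complex-potential error: the frequency-space multiplier must generate a positive bulk at the trapping radius, remain stable under the $|a|/M$ deformation of the Schwarzschild orbit structure, and allow the Im error to be reduced consistently to the $\BulkFIOne$ form, all while closing the estimate uniformly in $\FinalTime$.
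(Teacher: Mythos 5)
Your outline captures several of the right ingredients—reduction to the charge-free case by Proposition~\ref{prop:ChargeDecomposition}, the Fackerell--Ipser equation for $\solu$, the spectral transform of $\soluCut$, a spectral-space multiplier degenerating at the $\spectralcollection$-dependent photon-orbit radius $\rorbit$, a $3/2$-derivative gain, and a Hardy estimate to absorb the negative part of the potential. However, there are two places where the proposal diverges from what actually closes in the paper, and one of them is a genuine circularity.

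The more serious issue is your reliance on Theorem~\ref{thm:EnergyBound} to control the boundary fluxes at $t=0$ and $t=\FinalTime$. In this paper, Theorem~\ref{thm:EnergyBound} is not available before Theorem~\ref{thm:Morawetz}: the two theorems are proved \emph{simultaneously} from five core estimates \eqref{eq:CoreI}--\eqref{eq:CoreV} and a bootstrap. The almost-conservation estimates \eqref{eq:CoreI} and \eqref{eq:CoreIII} bound $\EnergyCrudeF(\FinalTime)$ and $\EnergyCrudeFI(\FinalTime)$ by the initial energies plus $(|a|/M)$ times the bulk terms; the Morawetz-type estimates \eqref{eq:CoreII}, \eqref{eq:CoreIV}, \eqref{eq:CoreV} bound the bulk terms by the energies at $0$ and $\FinalTime$ plus again $(|a|/M)$ times bulk terms. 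The closure step is precisely the substitution chain that makes both boundedness and integrability follow at once once $|a|/M$ is small. Your proposal invokes energy boundedness as an external input and therefore does not close as written; you need to carry $\EnergyCrudeF(\FinalTime)$ and $\EnergyCrudeFI(\FinalTime)$ on the right-hand side of each intermediate estimate and perform the bootstrap explicitly.

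The second divergence is how $\BulkMorawetzF$ is produced. You propose applying a radial multiplier to \emph{wave equations satisfied by $\phipm$}. On Kerr those are the Teukolsky equations, which carry first-order spin terms and a complex potential; the paper explicitly avoids them, precisely because no Morawetz estimate for them was available, and instead contracts the Maxwell stress-energy tensor $\EMS[\MaxF]$ with the oversimplified radial field $\vecMorawetzF=\fnMorawetzF\pr$. Crucially, this does \emph{not} give a standalone bound on $\BulkMorawetzF$: the resulting bulk is sign-indefinite in $|\phiz|^2$, and the best one gets is core estimate \eqref{eq:CoreII}, namely $\BulkMorawetzF\lesssim\EnergyCrudeF(\FinalTime)+\EnergyCrudeF(0)+\BulkFILowerOrder$. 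The positivity for $\phipm$ and the absence of a $\chirfar$ cutoff come from the structure of $\EMS$ (lemma~\ref{lemma:MaxFEMSComponents}), not from any photon-sphere-nondegeneracy of a $\phipm$ wave equation, and $\BulkFILowerOrder$ appearing on the right of \eqref{eq:CoreII} is exactly what makes the bootstrap necessary. Relatedly, in the paper $\BulkFILowerOrder$ is not recovered from $\BulkFITwo$ by a Hardy inequality after the fact: the Hardy/hypergeometric argument enters earlier, inside the positivity proof of $\BulkSMain$ in lemma~\ref{lemma:BulkMainMorawetzFI}, using the charge-free angular lower bound (lemma~\ref{lemma:LowerBoundForSpectralQ}) to make the $\mathcal{U}$ term compensate the otherwise too-negative $\mathcal{V}$, so that $\BulkFILowerOrder$ and $\BulkFITwo$ come out of the same spectral bulk simultaneously.
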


The results of theorem \ref{thm:Morawetz} can be interpreted as a relatively weak form of decay, since they show that, for fixed intervals in $r$ the integral in $t$ and the angular variables is finite. Thus, the average of the electric and magnetic components must decay to zero over space-time regions with $r$ bounded and $t\rightarrow\pm\infty$. Although this type of decay estimate is relatively weak, it is sufficiently robust to have formed the foundation for many further decay results in the study of fields outside black holes. 

The major obstacles in proving energy and Morawetz estimates for the Maxwell field and their resolution in this paper are the following: 
\begin{enumerate}
\item There is no positive, conserved energy. Noether's theorem associates a conserved energy to symmetries of a PDE, and $\pt$ generates a symmetry of the Kerr space-time and hence the Maxwell equation on it. However, because $\pt$ fails to be time-like everywhere in the exterior, this energy fails to be nonnegative. On the other hand, any time-like vector generates a nonnegative energy, but these need not be conserved. To generate a positive energy, we use the the vector field $\vecTBlend=\pt+\fnBlend (a/(\rp^2+a^2))\pp$ which we introduced in \cite{AnderssonBlue}. Here, $\fnBlend$ is a function that goes from $1$ to $0$ in a range of $r$ values of our choice. We will take this region to be $[10M,11M]$ for convenience. This vector field is time-like in the exterior, so it generates a nonnegative energy. Furthermore, this vector field only fails to be a symmetry in the region $r\in[10M,11M]$. For $a$ small, this means that we can control the change in the energy by the bulk terms. 
\item There are orbiting null geodesics, and these null geodesics fill an open set. Orbiting null geodesics are a major obstacle to decay estimates such as the Morawetz estimate, since initial data can be chosen so that an arbitrarily large proportion of the energy remains near the orbiting null geodesics for an arbitrarily long period of time.
In the Schwarzschild $a=0$ case, these occur only at $r=3M$, but in the $a\not=0$ Kerr case, these bifurcate from this value. However, for $a$ small, these orbiting null geodesics remain close to $r=3M$. Following our earlier work \cite{AnderssonBlue}, we use a phase space function $\DCurlyRT$ as a measure of distance from these orbiting null geodesics.  In our core estimates, we use $\chirfar$ to keep at least $\rfar/2$ away from these orbiting null geodesics when integrating two time or angular derivatives in expectation value (\ie{} one derivative in $L^2$). A more detailed description of the null geodesics and out treatment of them appears in subsection \ref{ss:FIMorawetzOutline}. Fortunately, since $3M$ and the interval $[10M,11M]$ are far apart, there is no danger that the degeneracy at the orbiting null geodesics will prevent us from estimating the terms arising from the failure of $\vecTBlend$ to be a generator of a symmetry. 
\item There are stationary solutions. The existence of such solutions prevents a Morawetz estimate from holding for general solutions. In fact, to the best of our knowledge, there is no way to prove a Morawetz estimate for the Maxwell field directly. Proving such an estimate would be an important advance in the field, even in the Schwarzschild case. Instead, we use the fact that $\solu$ satisfies the Fackerell-Ipser equation, which is a wave-like equation with a potential. Morawetz estimates are known for the wave equation. The Fackerell-Ipser equation also has stationary solutions, which arise from the stationary solutions to the Maxwell equation, because of the potential. In the Schwarzschild case, the stationary solutions are the only spherically symmetric solutions, so that, after projecting out the stationary solutions, one can use an additional positivity arising from the angular derivatives to counteract the influence of the potential in the Morawetz estimate \cite{Blue:Maxwell}. Although, the stationary solutions are not exactly spherically symmetric in the $a\not=0$ case, there is still a lower bound for the charge-free solutions in terms of their angular derivatives, which we state in lemma \ref{lemma:LowerBoundForAngularDerivatives}. This is sufficient to allow us to prove a Morawetz estimate for the Fackerell-Ipser equation. 
\item The Fackerell-Ipser equation has a complex potential. We consider energies for both the Maxwell and Fackerell-Ipser equations. The Fackerell-Ipser equation does not obviously arise as the Euler-Lagrange equation for any real-valued Lagrangian, which means that we cannot use Noether's theorem to generate a conserved energy from a symmetry. However, when $|a|/M$ is small, the imaginary part of the potential is also small. Unfortunately, unlike the terms arising from the failure of $\vecTBlend$ to be a generator of a symmetry, the imaginary part of the potential is not supported away from the orbiting null geodesics. This requires us to prove a bound on a term of the form $\BulkFIOne$. The problems arising from complex potentials were already discussed in \cite{AnderssonBlueNicolas}. The need for control of $\BulkFIOne$ is one of the reasons why we need to use pseudodifferential operators, instead of just differential operators. 
\end{enumerate}

\subsection{Previous results}
Fields outside of black holes have been studied for several decades, and Morawetz estimates have been found to be a particularly powerful tool in the last decade. Our earlier works \cite{AnderssonBlue,Blue:Maxwell} provide a more complete description of this. Here we briefly summarise some of the major results. 

The wave equation outside black holes has been more extensively
studied than higher spin fields. In the $a=0$, because $\pt$ is both
globally time-like and Killing, there is a conserved energy
\cite{Wald}. The idea of trying to prove a Morawetz estimate was first
introduced in terms of a coordinate dependent Schr\"odinger equation
\cite{LabaSoffer}. This was then carried over to the wave equation
\cite{BlueSoffer:LongPaper,BlueSterbenz,DafermosRodnianski:RedShift}. For
$a\not=0$, the construction of a bounded energy is significantly more
difficult. This was first proved in conjunction with a Morawetz
estimate for a range of frequencies
\cite{DafermosRodnianski:KerrBoundedness}. An energy bound and a Morawetz estimate for all frequencies was proved shortly afterward \cite{TataruTohaneanu}. Shortly after that, we proved a similar result at a higher level of regularity, using only classical differential operators instead of Fourier or spectral methods \cite{AnderssonBlue}. We hope to apply similar results to the Maxwell field problem, but it seems that there is not yet a theory of hidden symmetries for the Maxwell field that is as advanced as the theory for the wave equation. As explained in our previous work \cite{AnderssonBlue}, from Morawetz estimates, many pointwise decay estimates have been proved. 

For higher, integer spin fields outside a Schwarzschild black hole, energy and pointwise decay estimates were first proved in the far exterior, roughly $r>t+C$ \cite{IngeleseNicolo}. Energy and Morawetz estimates have also been used to prove pointwise decay in the full exterior region \cite{Blue:Maxwell}. Energy and Morawetz estimates have also been proved on arbitrary spherically symmetric black hole space-times, independently of whether they satisfy the Einstein equation \cite{SterbenzTataru}. For linearised gravity, the curvature also has a decomposition into components, and, in the $a=0$ Schwarzschild case, the middle component also satisfies a wave equation; energy and Morawetz estimates have also been proved for this case \cite{BlueSoffer:ReggeWheeler}. On a space-time that is a solution of the Einstein equation and for which the metric, connection coefficients, and curvature decay to the Schwarzschildean values at a certain level of regularity, one finds that higher derivatives of the connection coefficients and curvature satisfy certain equations. The equations for the derivatives of the curvature are wave-like equations. Under an assumption excluding the analogue of the stationary solutions to the Maxwell equation, from energy and Morawetz estimates, pointwise decay for the connection coefficients and the curvature have been proved \cite{Holzegel}. 

For both the Maxwell and linearised Einstein equation, the middle component of the null decomposition satisfies the Regge-Wheeler equation \cite{ReggeWheeler}, which is a wave-like equation with a real potential. In the $a\not=0$ case, the middle component of the Maxwell field, $\phiz$, satisfies the Fackerell-Ipser equation \cite{FackerellIpser}, and, the middle component of the linearised curvature satisfies a very similar equation \cite{AksteinerAndersson}. In addition, the extreme components for both the Maxwell and linearised Einstein equations also satisfy second-order PDEs, known as the Teukolsky equations \cite{Teukolsky}. One of the few results about solutions to the Maxwell and linearised Einstein equation outside rotating Kerr $a\not=0$ black holes is that there are no exponentially growing solutions to the Teukolsky equation \cite{Whiting}. 

For the Maxwell equation and linearised Einstein equation outside a Schwarzschild black hole, other techniques have also been applied. Scattering has been proved \cite{Bachelot}. Decay in $L_{\text{loc}}^\infty$ has been proved without a rate \cite{FinsterSmoller}. For solutions of the Regge-Wheeler equation, pointwise decay has been proved \cite{DonningerSchlagSoffer}. 

\subsection{Outline of the proof and the core estimates}
The core estimates in this paper are that if $|a|/M$ is sufficiently small, then there is a choice of $\rfar/M>0$ and constant $C>0$ such that for any charge-free solution
\begin{align}
\EnergyCrudeF(T)&\leq C\left( \EnergyCrudeF(0) + \epsilonMain \left(\BulkMorawetzF+\BulkFILowerOrder\right) \right),
\tag{I}\label{eq:CoreI}\\
\BulkMorawetzF&\leq C\left( \EnergyCrudeF(T)+\EnergyCrudeF(0)+\BulkFILowerOrder \right),
\tag{II}\label{eq:CoreII}\\
\EnergyCrudeFI(T)&\leq C\left( \EnergyCrudeFI(0) +\EnergyCrudeF(0)+ \epsilonMain \left(\BulkMorawetzF+\BulkFILowerOrder+\BulkFIOne+\BulkFITwo\right)\right)
\tag{III}\label{eq:CoreIII}\\
\BulkFILowerOrder+\BulkFITwo&\leq C\left( \EnergyCrudeFI(T) +\EnergyCrudeFI(0) +\EnergyCrudeF(T)+\EnergyCrudeF(0)+\frac{|a|}{M}\BulkMorawetzF\right),
\tag{IV}\label{eq:CoreIV}\\
\BulkFIOne&\leq C\left( \EnergyCrudeFI(T) +\EnergyCrudeFI(0) +\BulkFILowerOrder+\BulkFITwo \right).
\tag{V}\label{eq:CoreV}
\end{align}
Recall these energies and bulk terms were defined in equations \eqref{eq:EnergyCrudeDefn} and \eqref{eq:BulkTermsDef}. In fact, for any sufficiently small choice of $\rfar/M>0$, there is a choice of $C$ such that these estimates hold, but this choice of $C$ is not uniform, so we choose a single value of $\rfar/M$. 
 
Theorems \ref{thm:EnergyBound} and \ref{thm:Morawetz} follow from proposition \ref{prop:ChargeDecomposition}, core estimates \eqref{eq:CoreI}-\eqref{eq:CoreV}, and the following simple bootstrap argument. By substituting core estimate \eqref{eq:CoreIV} into core estimates \eqref{eq:CoreII} and \eqref{eq:CoreV}, one finds that 
\begin{align*}
\BulkMorawetzF+\BulkFILowerOrder+\BulkFIOne+\BulkFITwo
\lesssim \EnergyCrudeF(\FinalTime)+\EnergyCrudeF(0)+\EnergyCrudeFI(\FinalTime)+\EnergyCrudeFI(0)+\frac{|a|}{M}\BulkMorawetzF .
\end{align*}
Thus, for $|a|/M$ sufficiently small, one finds
\begin{align*}
\BulkMorawetzF+\BulkFILowerOrder+\BulkFIOne+\BulkFITwo
\lesssim \EnergyCrudeF(\FinalTime)+\EnergyCrudeF(0)+\EnergyCrudeFI(\FinalTime)+\EnergyCrudeFI(0) .
\end{align*}
Substituting this into core estimates \eqref{eq:CoreI} and \eqref{eq:CoreIII} and taking $|a|/M$ sufficiently small proves theorems \ref{thm:EnergyBound} and \ref{thm:Morawetz}. 

The main method in this paper is to choose vector fields with which to generate energies by contracting with an energy-momentum tensor and then integrating over a hypersurface of constant $t$. Core estimates \eqref{eq:CoreI}-\eqref{eq:CoreII} are proved using the energy-momentum tensor for the Maxwell equation, using $\vecTBlend$ and a radial vector field $\vecMorawetzBasic$ respectively. Core estimate \eqref{eq:CoreIII} is proved using an approximate energy-momentum tensor for the Fackerell-Ipser equation and the vector field $\vecTBlend$. To prove core estimates \eqref{eq:CoreIV} and \eqref{eq:CoreV}, we introduce a Fourier-spectral transform of the Fackerell-Ipser equation. The variables of the Fourier-spectral transform correspond to time and angular derivatives. Solutions to this also have an analogue of an energy-momentum tensor. We use a radial vector field that points away from the orbiting null geodesics to prove core estimate \eqref{eq:CoreIV}. We then rescale this vector field by powers of the Fourier-spectral variables to obtain more control over the time and angular derivatives. This rescaled vector field allows us to prove core estimate \eqref{eq:CoreV}. 

Core estimate \eqref{eq:CoreV} actually follows from an estimate on 
\begin{align*}
\int \modspectralcollection^{\ExponentRefinedThree}|\soluS|^2 \diFourSpectral ,
\end{align*}
where $\spectralcollection$ refers to the spectral parameters corresponding to derivatives in the time and angular directions, and $\soluS$ is the transform of $\solu$. This is based on ideas in \cite{BlueSoffer:LongPaper} for the $a=0$ wave equation, where the exponent can be increased from $3/2$ to $2-\epsilon$ for any $\epsilon>0$. A similar result, with only logarithmic losses was proved in \cite{Tohaneanu}. 

One useful technique when treating these estimates is to note that there is an overall rescaling freedom in the Kerr metric $(M,a;t,r,\omega)\mapsto(\lambda M,\lambda a;\lambda t,\lambda r,\omega)$. It is for this reason that it is possible to use quantities like $|a|/M$ as a measure of slow rotation. To preserve this scale invariance, we also wish to set $\rfar/M$ when defining $\BulkFIOne$ and $\BulkFITwo$, instead of setting $M$ and then setting $\rfar$. 

In section \ref{s:FurtherGeometryOfKerr}, we introduce some more notation, discuss the geometry of the Kerr space, and consider the regularity of the components $\phipm$. In section \ref{s:Cohomology}, we discuss the decomposition of solutions into stationary and charge-free components, which allows us to prove proposition \ref{prop:ChargeDecomposition}. In section \ref{s:Maxwell}, we prove core estimates \eqref{eq:CoreI}-\eqref{eq:CoreII}, which are proved using the Maxwell equations without reference to the Fackerell-Ipser equation. In section \ref{s:FI}, we treat the Fackerell-Ipser equation and prove core estimate \eqref{eq:CoreIII}. In section \ref{s:FourierSpectral}, we treat the Fourier-spectral transform of the Fackerell-Ipser equation and prove core estimates \eqref{eq:CoreIV}-\eqref{eq:CoreV}. Recall that when we introduced core estimates \eqref{eq:CoreI}-\eqref{eq:CoreV}, we explained how they combine with the charge decomposition in proposition \ref{prop:ChargeDecomposition} to prove theorems \ref{thm:EnergyBound} and \ref{thm:Morawetz}.

\section{Some further notation and the geometry of the Kerr space-time}
\label{s:FurtherGeometryOfKerr}

\subsection{Some further notation}
We say that two quantities $A$ and $B$ are $a$ almost equal iff there is a constant $C$ such that $(1-C|a|/M)A\leq B\leq (1+C|a|/M)A$. 

We use $A\lesssim B$ to mean that there is a constant $C$, such that
$A\leq CB$, and that $C$ can be chosen independently of $M$, $a$, $r$,
$t$, $\theta$, $\phi$, $\MaxF$, $\solu$, $\soluS$ and chosen to be
uniformly with respect to some small parameter. Typically this means
that for sufficiently small $\epsilonSlowRotationIntro>0$, there is a
constant $C$ uniform in $\epsilonMain\leq\epsilonSlowRotationIntro$
such that $A\leq CB$. However, it certain cases, we will also
introduce other small parameters, such as $\epsilondtsquared/M$, where
$\epsilondtsquared$ is a parameter associated with coefficients of
$\pt$ in section \ref{s:FourierSpectral}. The relevant cases are defined explicitly
in subsection \ref{ss:Smallness}. We use $A\sim B$ to mean $A\lesssim B$ and $B\lesssim A$. If estimates of this form hold for one value of $\epsilonSlowRotationIntro$, they will also hold for any smaller (positive) value of $\epsilonSlowRotationIntro$. Thus, if there is a finite collection estimates of this form, then an $\epsilonSlowRotationIntro$ can be found for which all of the estimates hold. We are also free to choose smaller $\epsilonSlowRotationIntro$ to obtain stronger estimates. 

Frequently in this paper, we will deal with rational functions. When discussing polynomials or rational functions, we will implicitly take this to mean a polynomial or rational function in $r$, $M$, $\epsilondtsquared$, $a$, and, in certain cases, $a\cos\theta$. We define a homogeneous rational function to be a ratio of homogeneous polynomials. We define a homogeneous polynomial of degree $m$ to be of maximal degree in $r$ if the coefficient of $r^m$ is nonzero. We define a homogeneous rational function to be of maximal degree if it is the ratio of two homogeneous polynomials of maximal degree in $r$. A homogeneous rational function has both a degree and an asymptotic growth rate in $r$; for homogeneous rational functions of maximal degree, these are equal. The set of homogeneous rational functions is closed under the operations of taking products and ratios. The set of homogeneous rational functions of maximal degree is also closed under these operations. Similarly, the set of homogeneous polynomials which have a positive lower and upper bound on compact subsets of the closure of the exterior, $r\geq\rp$, is also closed under these operations. Importantly, if a homogeneous rational function of maximal degree has a positive lower and upper bound on each compact subset of the closure of the exterior when $a=0=\epsilondtsquared$, then it will still have such bounds for $a$ and $\epsilondtsquared$ sufficiently small. This is not true for polynomials or rational functions that fail to be of maximal degree. Furthermore, if a homogeneous polynomial, $p$, is of maximal degree and has growth rate $r^{m}$ for $a=0=\epsilondtsquared$, then the influence of $a$ and $\epsilondtsquared$ is lower order, in the sense that $|p(r,M,0,0)-p(r,M,\epsilondtsquared,a)|=O(r^{-m-1})$. 

There are four situations in which we will encounter homogeneous rational functions that fail to have positive lower and upper bounds in each compact subset of the closure of the exterior. First, for $a=0=\epsilondtsquared$, the function might be nonnegative but vanish at $r=2M$. Crucially, whenever this occurs in this paper, we have been able to factor the rational function as a power of $\KDelta$ times a rational function with a positive lower and upper bound on each compact subset of the closure of the exterior. Second, for $a=0$, the function might vanish at some $r\in(2M,\infty)$. This occurs only once in our argument, in the coefficient of the term denoted $\DCurlyRT$. Third, for a homogeneous rational function of maximal degree, the derivative need not have maximal degree if the original function is of degree zero. This situation occurs in our argument twice; it occurs with the coefficient of $\pt^2$ in $\DCurlyRT$ and with all the coefficients in the term denoted $\DDCurlyRTT$. These second and third problems are treated in lemma \ref{lemma:PropertiesOfDCurlyRT}. Fourthly, there can be a complicated sum of derivatives so that even for $a=0=\epsilondtsquared$, the homogeneous rational function is not positive in the exterior. This occurs in the term $\mathcal{V}$ in lemma \ref{lemma:BulkMainMorawetzFI}.

Recall that we defined
\begin{align*}
\diFourFI&=\di r\diomega\di t,&
\diFourNatural&=\KSigma\diFourFI .
\end{align*}

\subsection{Foliations}
\label{ss:Foliations}
Recall that we consider the exterior region of the Kerr space-time,
which is a manifold parameterised by
$(t,r,\omega)\in\Reals\times(\rp,\infty)\times\Stwo$ with the metric
given in equation \eqref{eq:KMetric}. As is well
known \cite{HawkingEllis}, this can be uniquely extended to a maximal
analytic extension, which, in turn, has a $C^0$ conformal
compactification. 

Of particular value in our analysis will be subregions of the form
\begin{align*}
\BulkZone{t_1}{t_2}
&= \{ (t',r',\omega') | t'\in(t_1,t_2),
r'\in(\rp,\infty),\omega'\in\Stwo \} ,
\end{align*}
for $t_1<t_2$. 
The entire exterior region and each subregion $\BulkZone{t_1}{t_2}$
are foliated by hypersurfaces 
\begin{align*}
\hst{t}&=\{ (t',r',\omega') | t'=t, 
r'\in(\rp,\infty),\omega'\in\Stwo \} .
\end{align*}
The exterior is also foliated by $\hsr{r}=\{ (t',r',\omega') |
t'\in\Reals, r'=r,\omega'\in\Stwo \}$. When dealing with
$\BulkZone{t_1}{t_2}$, we will also use $\hsr{r}$ to denote
$\hsr{r}\cap\BulkZone{t_1}{t_2}$. We orient $\hst{t}$ and $\hsr{r}$
with normal $1$-form pointing along $-\di t$ and $-\di r$
respectively. Along surfaces of fixed $t$, inside the conformal
compactification of the maximal analytic extension of the exterior
region, there is a well defined limit set as $r\rightarrow\rp$ and as
$r\rightarrow\infty$, and this limit is independent of $t$. We will
denote these as $\bif$ and $\spacelikeinfty$. These are known as the
bifurcation sphere and space-like infinity respectively. They are each
$2$-dimensional surfaces, although each $\hst{t}$ and $\hsr{r}$ (for
$r\in(\rp,\infty)$) is a $3$-hypersurface. Thus, for any regular
vector field, the flux through $\bif$ and $\spacelikeinfty$ is
zero. In the conformal compactification of the maximal analytic
extension, the boundary of $\BulkZone{t_1}{t_2}$ is
$\hst{t_2}\cup-\hst{t_1}\cup-\bif\cup\spacelikeinfty$.

In a flux integral, one needs the normal to a hypersurface and the
induced volume form. For $\hst{t}$, this is 
\begin{align*}
\diNormal{\alpha}{\hst{t}}
&=-\left(\frac{\gMetric^{\alpha t}}{(\gMetric^{tt})^{1/2}}\right) 
\sqrt{\gMetric_{rr}\gMetric_{\theta\theta}\gMetric_{\phi\phi}}
\di r\di\theta\di\phi \\
&=-\gMetric^{\alpha t} 
\left(\frac{\gMetric_{tt}\gMetric_{\phi\phi}-\gMetric_{t\phi}^2}{\gMetric_{\phi\phi}}\right)^{1/2}
\sqrt{\gMetric_{rr}\gMetric_{\theta\theta}\gMetric_{\phi\phi}}
\di r\di\theta\di\phi \\
&=-\gMetric^{t\alpha}\sqrt{-\det\gMetric}\sin\theta 
\di r\di\theta\di\phi ,\\
&=\left(\pt+\frac{2aMr}{\KPi}\pp\right)^\alpha\frac{\KPi}{\KDelta}\sin\theta
\di r\di\theta\di\phi .
\end{align*}
Similarly,
\begin{align*}
\diNormal{\alpha}{\hsr{r}}
&=-\pr^\alpha \KDelta\sin\theta \di t\di\theta\di\phi .
\end{align*}

\subsection{Time-like vectors}
\label{ss:TimelikeVectors}
In the exterior of the Schwarzschild space-time, there is a unique globally time-like
Killing vector, $\pt$, which is orthogonal to hypersurfaces of constant
$t$. In the Kerr space-time, there is no such structure, so we are
forced to consider a variety of vectors. It is useful to consider
\begin{align*}
\vecTBlend&=\pt +\omegaBlend\pp, 
&\omegaBlend&=\fnBlend \frac{a}{\rp^2+a^2} , \\
\vecTPerp&=\pt +\omegaPerp\pp, 
&\omegaPerp&=\frac{2aMr}{\KPi}, \\
\vecTPNV&=\pt +\omegaPNV\pp, 
&\omegaPNV&=\frac{a}{r^2+a^2} ,
\end{align*}
where $\fnBlend$ is a smooth, decreasing function of $r$, that is identically
$1$ for $r<10M$, is $0$ for $r>11M$, and satisfies $\forall k\in\Naturals: \pr^k\fnBlend \lesssim M^{-k}$. 

Each of these vectors has a length that approaches $-1$ as
$r\rightarrow\infty$ and which vanishes at a rate of
$-(\KDelta/r^2)^{1/2}$ as $r\rightarrow\rp$. The difference between
any two of them is bounded by $|a|\KDelta r^{-4}$, in particular, this
difference vanishes relative to the length of any of these three
vectors as $r\rightarrow\rp$, $r\rightarrow\infty$, and uniformly in
$r$ as $|a|\rightarrow 0$.

\subsection{Regularity of the null decomposition}
\label{ss:RegularityOfTheNullDecomposition}
Recall that we introduced
\begin{align*}
\phip&=2\MaxF[\vecLPNV,\vecMPNV] ,\\
\phiz&=\sqrt{2}\left(\MaxF[\vecLPNV,\vecNPNV]+ \MaxF[\Hnormalised,\Pnormalised],\right)\\
\phim&=2\MaxF[\vecNPNV,\vecMbPNV] .
\end{align*}

The vector fields $\vecLPNV$ and $\vecNPNV$ are globally smooth, but $\vecMPNV$ and $\vecMbPNV$ fail to be globally smooth, since they are constructed from 
\begin{align*}
\Hnormalised&=\frac{1}{\sqrt{\KSigma}}\ph,&
\Pnormalised&=\frac{1}{\sqrt{\KSigma}}\left(a\sin\theta\pt+\frac{1}{\sin\theta}\pp\right),  
\end{align*}
which fail to be smooth at $\theta\in\{0,\pi\}$. However
\begin{align*}
\KSigma\vecMbPNV\wedge\vecMPNV&=\ph\wedge\frac{1}{\sin\theta}\pp +a\sin\theta\ph\wedge\pt ,
\end{align*}
is the sum of the volume form on the round sphere plus the wedge product of the smooth vector fields $\sin\theta\ph$ and $\pt$, so $\vecMbPNV\wedge\vecMPNV$ is smooth. Hence $\phiz$ is a smooth function if $\MaxF$ is smooth. Similarly, $\solu=(r-ia\cos\theta)\phiz$ is also smooth. 

In contrast, $\phipm$ are not typically smooth functions, even if $\MaxF$ is smooth, since $\vecMPNV$ is not smooth. To overcome this, we must reinterpret them as sections of a vector bundle. At each point, consider the vector space spanned by $\Hnormalised$ and $\Pnormalised$. Since the wedge product of these vectors has already been shown to extend to a globally smooth $2$-form, their span can be extended to a globally defined $2$-dimensional vector bundle, $\CKBundlepm$. Since it is a sub-bundle of the tangent space, we can use the same indices in both bundles. Since $\Hnormalised$ and $\Pnormalised$ are each orthogonal unit vectors, the vector bundle $\CKBundlepm$ has a Riemannian metric. For $(\theta,\phi)\in(0,\pi)\times(0,2\pi)$, the metric on this vector bundle is 
\begin{align*}
\Hnormalised\otimes\Hnormalised+\Pnormalised\otimes\Pnormalised ,
\end{align*}
which we will denote by $\KSigma^{-1}\CarterQ$. This metric has a unique smooth extension, for each $(t,r)$ to $\omega\in\Stwo$. We will also denote this extension my $\KSigma^{-1}\CarterQ$. 

This $\KSigma^{-1}\CarterQ$ can be seen as a projection operator from the tangent space of the space-time to $\CKBundlepm$. Inspired by \cite{ChristodoulouKlainerman:DecayOfLinearFields}, we define
\begin{align*}
\CKalpha^\alpha &= \MaxF_{\gamma\delta}(\vecLPNV)^\gamma (\KSigma^{-1}\CarterQ)^{\alpha\delta} ,\\
\CKalphab^\alpha &= \MaxF_{\gamma\delta}(\vecNPNV)^\gamma (\KSigma^{-1}\CarterQ)^{\alpha\delta} . 
\end{align*}
The sections $\CKalpha$ and $\CKalphab$ are smooth if $\MaxF$ is. 

Finally, we return to the correct interpretation of $\phipm$. The components of $\CKalpha$ with respect to the $\Hnormalised,\Pnormalised$ basis are the real and imaginary components of $\phip$. The components of $\CKalphab$ with respect to the $\Hnormalised,-\Pnormalised$ basis are the real and imaginary components of $\phim$. Hence, $\phip$ and $\phim$ are isomorphic to $\CKalpha$ and $\CKalphab$. Furthermore, the modulus of the former complex numbers is equal to the length of the latter sections of the bundle $\CKBundlepm$. Since $\CKalpha$ and $\CKalphab$ are smooth, this lets us properly understand the appropriate notion of regularity for $\phipm$. From this perspective, it is clear that we should really be working with $\CKalpha$ and $\CKalphab$, instead of $\phipm$. However, for compactness of notation, we will use $\phipm$, and, if there is ever a concern about regularity at the poles, we will interpret $\phip$ and $\phim$ as $\CKalpha$ and $\CKalphab$ respectively.

\subsection{Standard smallness assumptions}
\label{ss:Smallness}
In this subsection, we introduce several standard smallness assumptions on $|a|/M$ and other parameters. In particular, we will want to have quantifications on the choice of $\rfar$ and the parameter $\epsilondtsquared$, which is introduced in section \ref{s:FourierSpectral}. 

\begin{definition}
\label{def:asmall}
We say the estimates $A\lesssim B$, $A\leq C B$, or $A\leq C_1B_1
+C_2B_2$ holds for $|a|/M$ sufficiently small if $\exists C,C_1, C_2,
\epsilonSlowRotationIntro>0:$ $\forall M>0,
a\in[-\epsilonSlowRotationIntro M,\epsilonSlowRotationIntro M]:$ the estimate $A\leq CB$ or $A\leq C_1B_1 +C_2B_2$ holds respectively. 
\end{definition}

\begin{definition}
\label{def:epsilondtsquaredalsosmall}
We say the estimates $A\lesssim B$, $A\leq C B$, or $A\leq C_1B_1
+C_2B_2$ holds for $\max(|a|/\epsilondtsquared,\epsilondtsquared/M)$
sufficiently small if $\exists C,C_1, C_2,
\epsilonSlowRotationIntro>0:$ $\forall M>0,
a\in[-\epsilonSlowRotationIntro M,\epsilonSlowRotationIntro M],
\epsilondtsquared\in[-\epsilonSlowRotationIntro
  M,\epsilonSlowRotationIntro M]:$ the estimate $A\leq CB$ or $A\leq C_1B_1 +C_2B_2$ holds respectively. 
\end{definition}

\begin{definition}
\label{def:anyprojectionaway}
We say the estimates $A\lesssim B$, $A\leq C B$, or $A\leq C_1B_1 +C_2B_2$ holds for 
any projection away from the orbiting null geodesics and $|a|/M$ sufficiently small
if $\forall \rfarratio\in(0,1]:$ $\exists C,C_1, C_2,
  \epsilonSlowRotationIntro>0:$ $\forall M>0,
  a\in[-\epsilonSlowRotationIntro M,\epsilonSlowRotationIntro M], \rfar=\rfarratio M:$ the estimate $A\leq CB$ or $A\leq C_1B_1 +C_2B_2$ holds respectively. 
\end{definition}

\begin{definition}
\label{def:chooseprojection}
We say there is a projection away from the orbiting null geodesic such that for $\max(|a|/\epsilondtsquared,\epsilondtsquared/M)$ one has the estimates $A\lesssim B$, $A\leq C B$, or $A\leq C_1B_1 +C_2B_2$ 
if  $\exists  \rfarratio\in(0,1]: \exists C,C_1, C_2,
  \epsilonSlowRotationIntro>0, \epsilondtsquaredMax>0:$ $\forall M>0,
  a\in[-\epsilonSlowRotationIntro M,\epsilonSlowRotationIntro M], \epsilondtsquared\in(0,\epsilondtsquaredMax M], \rfar=\rfarratio M:$ the estimate $A\leq CB$ or $A\leq C_1B_1 +C_2B_2$ holds respectively. 
\end{definition}

\begin{definition}
\label{def:anyprojectionawayalso}
We say the estimates $A\lesssim B$, $A\leq C B$, or $A\leq C_1B_1 +C_2B_2$ holds for 
any projection away from the orbiting null geodesics and $\max(|a|/\epsilondtsquared,\epsilondtsquared/M)$ sufficiently small
if $\forall \rfarratio\in(0,1]:$ $\exists C,C_1, C_2,
  \epsilonSlowRotationIntro>0, \epsilondtsquaredMax>0:$ $\forall M>0,
  a\in[-\epsilonSlowRotationIntro M,\epsilonSlowRotationIntro M], \epsilondtsquared\in(0,\epsilondtsquaredMax M], \rfar=\rfarratio M:$ the estimate $A\leq CB$ or $A\leq C_1B_1 +C_2B_2$ holds respectively. 
\end{definition}

\section{Charge conservation}
\label{s:Cohomology}
\subsection{Charge, cohomology, and determination of the bound state}
The electric and magnetic charges evaluated on any two surface
$\TwoSfc$ are defined to be, respectively,
\begin{align*}
\ChargeE[\TwoSfc]&=\frac{1}{4\pi}\int_{\TwoSfc} \Hodge\MaxF ,&
\ChargeB[\TwoSfc]&=\frac{1}{4\pi}\int_{\TwoSfc} \MaxF . 
\end{align*}
Note that there is no need to specify a volume form or measure, since
these are each the integrals of a two form over a two surface. We are
particularly interested in evaluating these on closed two surfaces of
constant $(t,r)$ denoted by $\TwoSfc(t,r)$. For $r>\rp$,
$\TwoSfc(t,r)$ is a topological sphere. (We will only consider the
exterior region, since some of these arguments fail in the interior,
particularly at $r=0$.)

Since any $\TwoSfc(t_1,r_1)$ can be continuously deformed to any other
$\TwoSfc(t_2,r_2)$, from Stokes's theorem and the Maxwell equations,
which state that both $\MaxF$ and $\Hodge\MaxF$ are closed two forms,
it follows that the electric and magnetic charges are equal on all
$\TwoSfc(t,r)$. Thus, the charges are equal when evaluated on any closed
two surface that encloses the black hole. Note that, in contrast with
the situation in Minkowski space, $\Reals^{1+3}$, since the second
homology class of the exterior region,
$H^2(\Reals\times(\rp,\infty)\times\Stwo)$ is nontrivial, and, in
particular, the $\TwoSfc(t,r)$ are in a nontrivial equivalence class,
the charge need not be zero.

For any $\ChargeConstant\in\Complex$, we define the Coulomb solutions
to be given by
\begin{align}
\phiz&=\frac{\ChargeConstant}{\Kp^2}, &
\phipm&=0 .
\label{eq:ChargedSolutions}
\end{align}
which is a solution of the Maxwell system. By taking the limit at fixed $t$ as $r\rightarrow\infty$, one can easily verify that for the Coulomb solutions
\begin{align*}
\ChargeConstant&=\ChargeE+i\ChargeB . 
\end{align*}
Since $H^2(\Reals\times(\rp,\infty)\times\Stwo)=H^2(\Stwo)=\Complex$,
it follows that the cohomology class of two forms is
$H_2(\Reals\times(\rp,\infty)\times\Stwo)=H_2(\Stwo)=\Complex$, so the
Coulomb solutions provide a representative for every equivalence
class.

Since each charge on $\TwoSfc(t,r)$ is constant as a function of
$(t,r)$, the charges can be determined from the initial data. Since the
charges and the Maxwell equations are linear, given a solution, we can
determine the charges $\ChargeE$ and $\ChargeB$, construct the
corresponding Coulomb solutions, and generate a new solution by
subtracting this Coulomb solution from the original solution. This
proves the first part of proposition \ref{prop:ChargeDecomposition}.

\subsection{Regularity conditions}
\label{ss:RegularityConditions}
To avoid repetition, we now introduce regularity conditions which will
be assumed through out this paper. 

\begin{definition}
\label{def:FRegular}
$\MaxF\in\wedge^2$ is a regular, charge-free solution of the Maxwell
equation \eqref{eq:Maxwell} if the charge is zero on every sphere, $\MaxF$ is $C^2$ in the exterior of the Kerr space-time, $\MaxF$
has a $C^2$ extension to the closure of the exterior in the maximal
analytic extension, $\MaxF$ vanishes on $\hst{0}$ for sufficiently
large $r$, and $\MaxF$ is a
solution of the Maxwell equation \eqref{eq:Maxwell}.

$\MaxF\in\wedge^2$ is a regular solution of the Maxwell
equation \eqref{eq:Maxwell} if it is the sum of a Coulomb solution and a regular solution. 
\end{definition}

\begin{definition}
\label{def:UpsilonRegular}
$\solu$ is regular in the sense of solutions of the Fackerell-Ipser
equation if it is $C^2$ in the exterior of the Kerr space-time, has a
$C^2$ extension to the closure of the exterior in the maximal analytic
extension, and vanishes on $\hst{0}$ for sufficiently large $r$. 
\end{definition}

\begin{remark}
If $\MaxF$ is regular, then $\solu=\solu[\MaxF]$ is regular. It is for this reason that we require $\MaxF\in C^2$ instead of $\MaxF\in C^1$.

If $\MaxF$ or $\solu$ is regular, then they can be understood as
classical solutions of the Maxwell equation or the Fackerell-Ipser
equation respectively. For simplicity, this is the only type of
solution we will consider. We expect that the set of regular solutions is dense in the energy space, as is the case in Minkowski space. We will ignore this issue, since the proof would require elliptic theory. 

\end{remark}

\begin{definition}
\label{def:VFRegular}
A vector field $X\in\Gamma$ is regular if it is $C^1$ in the exterior of the Kerr space-time and has a continuous extension to the closure of the exterior in the maximal analytic extension. 

A coefficient $f$ is regular if it is $C^3$ in the exterior of the Kerr space-time and has a continuous extension to the closure of the exterior in the maximal analytic extension. 
\end{definition}

\begin{remark}
Regularity for a vector field is considerably weaker than the level of regularity that we assume for solutions of the Maxwell or Fackerell-Ipser equations. It is sufficient to apply the divergence theorem. We wish coefficients to be more regular so we can apply the d'Alembertian to the derivative of the coefficient. 
\end{remark}

\subsection{An $L^2$ estimate on the spherical mean of the spin zero component}
In the Schwarzschild space-time, the Coulomb solutions are the only
spherically symmetric solutions. Furthermore, the spherically
symmetric functions are the only functions on the sphere with mean
zero. That fact allows one to prove a lower bound on the spherical
$L^2$ norm of charge-free solutions in terms of the spherical $L^2$
norm of their angular derivatives. In particular, the spherical
Laplacian can be bounded below by $\ell(\ell+1)=2$. The purpose of
this subsection is to provide a similar estimate in the Kerr
space-time. This estimate will be a crucial part of the Morawetz
estimate in subsection \ref{ss:FIMorawetzBasic}.

\begin{lemma}[Lower bound on angular derivatives of charge-free solution]
\label{lemma:LowerBoundForAngularDerivatives}
\hypothesisA, 
\hypothesisFMax, 
then for $t\in\Reals$ and $r>\rp$, 
\begin{align*}
2&\int_{\TwoSfc(t,r)} |\phiz|^2 \diomega\\
&\leq \int_{\TwoSfc(t,r)} |\pAng \phiz|^2 \diomega 
-a^2\potlL \int_{\TwoSfc(t,r)}
  |\phip+\phim|^2 \diomega .
\end{align*}
\end{lemma}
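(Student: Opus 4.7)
The plan is to combine the two scalar charge-free constraints $\int_{\TwoSfc(t,r)}\MaxF=0=\int_{\TwoSfc(t,r)}\Hodge\MaxF$ with a sharpened Poincar\'e-type identity on the 2-sphere.

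First I would pull $\MaxF$ back to $\TwoSfc(t,r)$ in the Carter tetrad. Using $\ph=\sqrt{\KSigma}\Hnormalised$ and $\pp=\sin\theta\sqrt{\KSigma}\Pnormalised-a\sin^2\theta\pt$ together with the tetrad expansion of $\pt$ into $\vecLPNV+\vecNPNV$ and $\Pnormalised$, a direct computation gives
\begin{align*}
\MaxF(\ph,\pp)=(r^2+a^2)\sin\theta\,\MaxF[\Hnormalised,\Pnormalised]+\tfrac12 a\sin^2\theta\sqrt{\KDelta}\,\Re(\phip+\phim),
\end{align*}
and an analogous formula for $(\Hodge\MaxF)(\ph,\pp)$ involving $\MaxF[\vecLPNV,\vecNPNV]$ and $\Im(\phip+\phim)$. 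Imposing vanishing of both charges on $\TwoSfc(t,r)$ gives two linear relations which, combined with $\phiz=\sqrt{2}(\MaxF[\vecLPNV,\vecNPNV]+\MaxF[\Hnormalised,\Pnormalised])$, express the spherical mean $\bar\phiz$ as an explicit linear functional of $\int\sin\theta(\phip+\phim)\diomega$ with coefficient of size $a\sqrt{\KDelta}/(r^2+a^2)$.

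Next I would compare against the spherical-harmonic expansion. Writing $\phiz=\bar\phiz+\tilde\phiz$ with $\tilde\phiz$ of zero mean, the exact identity
\begin{align*}
\int|\pAng\phiz|^2\diomega-2\int|\phiz|^2\diomega=-8\pi|\bar\phiz|^2+\sum_{\ell\geq 2}(\ell(\ell+1)-2)\int|\phiz_\ell|^2\diomega
\end{align*}
gives $\int|\pAng\phiz|^2\geq 2\int|\phiz|^2-8\pi|\bar\phiz|^2$. Applying Cauchy--Schwarz (using $\int\sin^2\theta\diomega=8\pi/3$) to the relation from the previous step produces $8\pi|\bar\phiz|^2\lesssim a^2\potlL\int|\phip+\phim|^2\diomega$, and combining yields $2\int|\phiz|^2\diomega\leq\int|\pAng\phiz|^2\diomega+C a^2\potlL\int|\phip+\phim|^2\diomega$ with $C=O(1)$.

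The main obstacle is precisely that this clean Poincar\'e-plus-Cauchy--Schwarz argument produces the $a^2\potlL$ correction with a $+$ sign, whereas the lemma asserts the strictly stronger bound with a $-$ sign. To flip the sign one must exploit that in Kerr the Coulomb family $q/\Kp^2$ has nontrivial $\theta$-dependence of order $a$, so that charge-freeness encodes orthogonality not merely to constants on each sphere but to the full two-parameter Coulomb subspace. Concretely I would subtract from $\phiz$ a tailored Kerr--Coulomb reference profile $q(t,r)/\Kp^2$ chosen so the remainder is $L^2$-orthogonal to that entire subspace, and then recompute $\int|\pAng\phiz|^2$: the angular derivatives of $q/\Kp^2$, together with the charge-free constraint tying $q(t,r)$ to integrals of $\phip+\phim$, should produce an extra positive contribution of size $+a^2\potlL\int|\phip+\phim|^2\diomega$ that overwhelms the Poincar\'e defect $-8\pi|\bar\phiz|^2$. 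Carrying out this cancellation---a sharp spheroidal spectral analysis tracking $a^2$-corrections to the round-sphere Poincar\'e constant $2$ exactly enough to produce a strict improvement---is the genuine technical content of the lemma and the step I expect to be hardest.
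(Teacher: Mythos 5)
Your first two paragraphs are, in substance, exactly the paper's proof: the paper computes $\ChargeE+i\ChargeB=\tfrac12\int_{\TwoSfc(t,r)}\bigl(-(r^2+a^2)\phiz+ia\sin\theta\sqrt{\KDelta}(\phip+\phim)\bigr)\diomega$, uses charge-freeness to express the spherical mean $u_s$ of $\phiz$ as $\tfrac{ia\sqrt{\KDelta}}{r^2+a^2}\tfrac{1}{4\pi}\int(\phip+\phim)\diomega$, applies the Poincar\'e inequality (first nonzero eigenvalue $2$) to the mean-free part, and Cauchy--Schwarz to the mean. So up to that point you have reconstructed the intended argument, with the minor cosmetic difference that you keep the weight $\sin\theta$ in Cauchy--Schwarz (constant $8\pi/3$) where the paper simply bounds it by $1$ (constant $4\pi$, hence the coefficient $a^2\potlL$ rather than a smaller one).

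Your third paragraph, however, chases a phantom. The sign in the stated lemma is a misprint: the paper's own proof yields only $2\int|\phiz|^2\diomega\leq\int|\pAng\phiz|^2\diomega+2a^2\potlL\int|\phip+\phim|^2\diomega$, i.e.\ the $+$ version you derived, and that is precisely the version invoked everywhere downstream (in lemma \ref{lemma:LowerBoundForSpectralQ} and in lemma \ref{lemma:BulkMainMorawetzFI} the extreme-component term appears on the right-hand side with a $+$ sign and is then absorbed as an error term of size $a^2$). The $-$ version is not only unproved in the paper but implausible as stated: it would force $\phip+\phim\equiv 0$ on any sphere where $\phiz$ happens to be a pure $\ell=1$ mode. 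So the ``genuine technical content'' you locate in flipping the sign --- the Coulomb-subspace orthogonalisation and the sharp spheroidal spectral analysis --- does not exist in the paper, is not needed for the rest of the argument, and is in any case not carried out in your proposal. Had you checked how the lemma is consumed later, you could have stopped after your second paragraph with a complete (and correct) proof of what is actually required.
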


\begin{proof}
The charge are given by
\begin{align*}
\ChargeE+i\ChargeB
&=\int_{\TwoSfc(t,r)}\Hodge\MaxF+i\MaxF \\
&=\int_{\TwoSfc(t,r)}(\Hodge\MaxF+i\MaxF)(\ph,\pp)\di\theta\di\phi \\
&=\int_{\TwoSfc(t,r)}\left(\sqrt{\frac{\KgMetric(\ph,\ph)\KgMetric(\pp,\pp)}{\KgMetric(\vecTperp,\vecTperp)\KgMetric(\pr,\pr)}}\MaxF(\vecTperp,\pr)+i\MaxF(\ph,\pp)\right)\di\theta\di\phi
\\
&=\int_{\TwoSfc(t,r)}\left(\frac{\KPi\sin\theta}{\KSigma}\MaxF(\vecTperp,\pr)+i\MaxF(\ph,\pp)\right)\di\theta\di\phi .
\end{align*}
By introducing the transition matrix between the coordinate basis and
the tetrad \eqref{eq:TetradForCharge}, inverting the transition
matrix, expanding the basis $\{\vecTperp,\pr,\ph,\pp\}$ in terms
of the Carter tetrad, we find
\begin{align*}
\ChargeE+i\ChargeB
&= \frac12 \int_{\TwoSfc(t,r)} \left( -(r^2+a^2)\phiz +ia\sin\theta\sqrt{\KDelta}(\phip+\phim) \right)\diomega .
\end{align*}
If the two charges vanish, then 
\begin{align}
\int_{\TwoSfc(t,r)} (r^2+a^2)\phiz \diomega 
&=ia\sqrt{\KDelta} \int_{\TwoSfc(t,r)} (\sin\theta(\phip+\phim) )\diomega .
\label{eq:ChargeFreeConditionIntermediate}
\end{align}

Given $\phiz$ on a sphere, we can decompose it into the
spherically symmetric component and the remainder, $u_s$ and
$u_r$. These components are orthogonal in $L^2$, so
\begin{align*}
\int_{\TwoSfc(t,r)} |u|^2 \diomega
&=\int_{\TwoSfc(t,r)} |u_s|^2 \diomega
+\int_{\TwoSfc(t,r)} |u_r|^2 \diomega .
\end{align*}
Since $2$ is the first nonzero eigenvalue of the spherical Laplacian,
the non-spherically-symmetric component satisfies
\begin{align*}
2\int_{\TwoSfc(t,r)} |u_r|^2 \diomega
&\leq \int_{\TwoSfc(t,r)} |\pAng u_r|^2 \diomega 
= \int_{\TwoSfc(t,r)} |\pAng \phiz|^2 \diomega 
.
\end{align*}
From equation \eqref{eq:ChargeFreeConditionIntermediate}, the
spherically-symmetric component satisfies
\begin{align*}
u_s(t,r)&=\frac{ia\sqrt{\KDelta}}{r^2+a^2}\frac{1}{4\pi} \int_{\TwoSfc(t,r)}
(\phip+\phim) \diomega . 
\end{align*}
From the Cauchy-Schwarz inequality, 
\begin{align*}
|u_s(t,r)|^2
&\leq \frac{a^2\KDelta}{(r^2+a^2)^2} \frac{1}{(4\pi)^2} \int_{\TwoSfc(t,r)}
  |\phip+\phim|^2 \diomega (4\pi) \\
&= \frac{a^2\KDelta}{(r^2+a^2)^2} \frac{1}{4\pi} \int_{\TwoSfc(t,r)}
  |\phip+\phim|^2 \diomega .
\end{align*}
Integrating this constant function over the sphere $\TwoSfc(t,r)$ with
respect to $\diomega$ eliminates the factor of $(4\pi)^{-1}$ on the
right. 
\end{proof}

\subsection{Almost orthogonality of the charged and uncharged parts}
\label{ss:AlmostOrthogonal}
The purpose of this subsection is to complete the proof of proposition \ref{prop:ChargeDecomposition}. Given two possibly charged solutions, $\MaxF$ and $\MaxH$ with components $\phii$ and $\psii$ respectively, let
\begin{align*}
\langle \MaxH,\MaxF\rangle_{\hst{t}}
=\inthst{t} \Big(\sum_i \bar{\psii}\phii\Big) \diThreeNaive& \\
+\inthst{t} \Big(\frac{r^2+a^2}{\KDelta}(\pt(\overline{\Kp\psiz}))&(\pt(\Kp\psiz)) +\frac{\KDelta}{r^2+a^2}(\pr(\overline{\Kp\psiz}))(\pr(\Kp\phiz))\\ 
&+\frac{(\pAng(\overline{\Kp\psiz}))\cdot(\pAng(\Kp\phiz))}{r^2} +\frac{\KSigma}{r^2}\bar\psiz\phiz\Big) \diThreeNaive .
\end{align*}
This defines a nondegenerate, nonnegative quadratic form, so $\langle\MaxF,\MaxF\rangle^{1/2}$ is a norm. 

The sum of the energies for a stationary solution with $\MaxFStatic$, for which the middle component is $\ChargeConstant/\Kp^2$ and the extreme components vanish, satisfies
\begin{align*}
\EnergyCrudeFArg{\MaxFStatic}+\EnergyCrudeFIArg{\soluStatic}
\sim \frac{|\ChargeConstant|^2}{M} .
\end{align*}

The sum of the energies $\EnergyCrudeF+\EnergyCrudeFI$ is $\langle\MaxF,\MaxF\rangle$. Thus, if $\MaxFTotal$ is decomposed into stationary and charge-free components, $\MaxFStatic$ and $\MaxF$, then
\begin{align*}
\EnergyCrudeFArg{\MaxFTotal}+\EnergyCrudeFIArg{\MaxFTotal}
&=\EnergyCrudeFArg{\MaxFStatic}+\EnergyCrudeFIArg{\soluStatic}
+\EnergyCrudeFArg{\MaxF}+\EnergyCrudeFIArg{\MaxF}\\
&\quad+2\Re\langle\MaxFStatic,\MaxF\rangle. 
\end{align*}
The inner product is
\begin{align*}
&\inthst{t} \frac{\bar{\ChargeConstant}}{\bar{\Kp}^2}\phiz \diThreeNaive \\
&+\inthst{t}  \left(\frac{\KDelta}{r^2+a^2}\left(\pr\frac{\bar{\ChargeConstant}}{\bar{\Kp}}\right)(\pr(\Kp\phiz))
+\frac{\left(\ph\frac{\bar{\ChargeConstant}}{\bar{\Kp}}\right)\ph(\Kp\psiz)}{r^2} +\frac{\KSigma}{r^2}\frac{\bar{\ChargeConstant}}{\bar{\Kp}^2}\phiz\right) \diThreeNaive \\
&=\bar{q}\left(\inthst{t} 2\phiz \diThreeNaiveFI +\inthst{t}\left(\frac{r^2+\KSigma}{\bar{\Kp}^2} -2\right) \phiz \diThreeNaiveFI \right)\\
&\quad+\bar{q}\left(\inthst{t}  -\Kp\left(\pr \frac{r^2\KDelta}{r^2+a^2}\pr\frac{1}{\bar{\Kp}}\right) \phiz \diThreeNaiveFI +\inthst{t}\ph\left(\frac{1}{\bar{\Kp}}\right)\ph(\Kp\phiz) \diThreeNaiveFI\right) .
\end{align*}
The first of these integrals can be estimated using the charge-free condition. 
\begin{align*}
\left|\inthst{t} 2\phiz \diThreeNaiveFI\right|
&\lesssim \inthst{t} a|\phipm|\sqrt{\potlL} \diThreeNaiveFI \\
&\lesssim |a|\left(\inthst{t} |\phipm|^2\diThreeNaive\right)^{1/2} \left(\inthst{t}\potlL r^{-2} \diThreeNaiveFI\right)^{1/2} \\
&\lesssim a M^{-3/2} \EnergyCrudeF^{1/2} .
\end{align*}
Since $(r^2+\KSigma)\bar{\Kp}^{-2}-2$ is a homogeneous rational function of degree zero in $r$, $M$, $a$, and $a\cos\theta$, and it vanishes at $a=0$, this rational function is bounded by $a r^{-2}$. Thus,
\begin{align*}
\left|\inthst{t}\left(\frac{r^2+\KSigma}{\bar{\Kp}^2} -2\right) \phiz \diThreeNaiveFI\right|
&\lesssim |a| \left(\inthst{t} |\phiz|^2 \diThreeNaive\right)^{1/2}\left(\inthst{t} r^{-4}\diThreeNaiveFI\right)^{1/2} \\
&\lesssim a M^{-3/2} \EnergyCrudeF .
\end{align*}
The integral of $-\Kp(\pr (r^2\KDelta(r^2+a^2)^{-1})\pr\bar{\Kp}^{-1})\phiz$ can be treated similarly by breaking the coefficient into a spherically symmetric part and a part which vanishes linearly in $a$. The first part can be bounded using the charge-free condition, and the second can be bounded by using the additional decay in $r$ in the remainder. The integral of $(\ph\bar{\Kp}^{-1})(\ph(\Kp\phiz))$ can be bounded directly using the Cauchy-Schwarz inequality, since $|\ph\bar{\Kp}^{-1}|\lesssim |a| r^{-2}$. Combining all these results, one finds
\begin{align*}
\left|\langle\MaxFStatic,\MaxF\rangle\right|
&\lesssim \frac{|a|}{M} \frac{|\ChargeConstant|}{M^{1/2}}(\EnergyCrudeF+\EnergyCrudeFI)^{1/2} \\
&\lesssim \frac{|a|}{M} \left(\EnergyCrudeFArg{\MaxFStatic}+\EnergyCrudeFIArg{\soluStatic}
\right)^{1/2}(\EnergyCrudeF+\EnergyCrudeFI)^{1/2} .
\end{align*}
Thus, 
\begin{align*}
\EnergyCrudeFArg{\MaxFTotal}+\EnergyCrudeFIArg{\MaxFTotal}
&\sim \EnergyCrudeFArg{\MaxFStatic}+\EnergyCrudeFIArg{\soluStatic}
+\EnergyCrudeFArg{\MaxF}+\EnergyCrudeFIArg{\MaxF} ,
\end{align*}
and, in fact, the two sides are $a$ almost equivalent. 

This completes the proof of proposition \ref{prop:ChargeDecomposition}.

\section{Estimates for the Maxwell field}
\label{s:Maxwell}
The purpose of this section is to prove core estimates \eqref{eq:CoreI}-\eqref{eq:CoreII}, which involve the Maxwell Field, $\MaxF$, and which can be proved without reference to the Fackerell-Ipser equation. Subsection \ref{ss:MaxwellVF} provides a brief review of the vector-field method; subsection \ref{ss:MaxwellACE} provides the proof of the almost conservation of energy result, core estimate \eqref{eq:CoreI}; and subsection \ref{ss:MaxwellMorawetz} provides the Morawetz estimate for the Maxwell field, core estimate \eqref{eq:CoreII}. 
 
\subsection{The stress-energy tensor for the Maxwell field}
\label{ss:MaxwellVF}
The material in this subsection is well known, \cf \eg \cite{ChristodoulouKlainerman:DecayOfLinearFields}. 

\begin{definition}
Given $F\in\wedge^2$, the Maxwell Lagrangian and energy-momentum tensor are
\begin{align*}
\Lagrangian&=\frac14 \MaxF_{\alpha\beta}\MaxF^{\alpha\beta}, &
\EMS[\MaxF]_{\alpha\beta}&=\MaxF_{\alpha\gamma}\MaxF_{\beta}{}^\gamma -\frac14\gMetric_{\alpha\beta}\MaxF_{\gamma\delta}\MaxF^{\gamma\delta} .
\end{align*}
\end{definition}

\begin{definition}
Given a regular vector field $\vecX$, $t\in\Reals$, $t_1<t_2$, and a regular $\MaxF\in\wedge^2$, the $4$-momentum generated by $\vecX$ of $\MaxF$, the energy  generated by $\vecX$ of $\MaxF$ and evaluated on $\hst{t}$, and the associated bulk term are
\begin{align*}
\GenMomentum{\vecX}[\MaxF]&=\EMS[\MaxF]_{\alpha\beta}\vecX^\beta ,\\
\GenEnergy{\vecX}[\MaxF](t)&=\inthst{t} \GenMomentum{\vecX}[\MaxF]_\alpha\diNormal{\alpha}{\hst{t}} ,\\
\GenBulk{\vecX}[\MaxF](t_1,t_2)&=\int_{\BulkZone{t_1}{t_2}} \EMS[\MaxF]_{\alpha\beta}\nabla^{(\alpha}\vecX^{\beta)} \diFourNatural. 
\end{align*}
\end{definition}

\begin{theorem}[Properties of the Maxwell energy-momentum]

\begin{enumerate}
\item{} [Dominant energy condition] If $\vecX, \vecY$ are time-like, future-pointing vectors and $\MaxF\in\wedge^2$, then $\EMS[\MaxF]_{\alpha\beta}\vecX^\alpha\vecY^\beta\geq0$. 
\item{} [Symmetry] If $\MaxF\in\wedge^2$, then $\EMS[\MaxF]_{\alpha\beta}=\EMS[\MaxF]_{\beta\alpha}$. 
\item{} [Trace free] If $\MaxF\in\wedge^2$, then $\EMS[\MaxF]_{\alpha}{}^\alpha=0$.
\item{} [Divergence free] If $\MaxF\in\wedge^2$ is a regular solution of the Maxwell equation \eqref{eq:Maxwell}, then $\nabla^\alpha\EMS[\MaxF]_{\alpha\beta}=0$. 
\end{enumerate}
\end{theorem}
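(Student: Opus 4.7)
The plan is to establish each of the four properties by direct tensor calculation, exploiting the antisymmetry of $\MaxF$, the algebraic structure of the Maxwell energy-momentum tensor, and, for the last property, the Maxwell equations in the form $\nabla^\alpha\MaxF_{\alpha\beta}=0$ and $\nabla_{[\gamma}\MaxF_{\alpha\beta]}=0$.

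Properties 2 and 3 are purely algebraic. For symmetry, the term $\gMetric_{\alpha\beta}\MaxF_{\gamma\delta}\MaxF^{\gamma\delta}$ is manifestly symmetric, while for the first term the antisymmetry of $\MaxF$ gives $\MaxF_{\alpha\gamma}\MaxF_{\beta}{}^{\gamma}=\MaxF_{\gamma\alpha}\MaxF^{\gamma}{}_{\beta}$, which after relabeling the contracted index is symmetric in $(\alpha,\beta)$. For trace freeness, a single contraction yields
\begin{equation*}
\EMS[\MaxF]_{\alpha}{}^{\alpha}=\MaxF_{\alpha\gamma}\MaxF^{\alpha\gamma}-\tfrac{1}{4}\cdot 4\cdot \MaxF_{\gamma\delta}\MaxF^{\gamma\delta}=0.
\end{equation*}

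For divergence freeness (Property 4), I would apply $\nabla^{\alpha}$ to $\EMS[\MaxF]_{\alpha\beta}$ and expand by the Leibniz rule. The divergence of the first term splits as $(\nabla^{\alpha}\MaxF_{\alpha\gamma})\MaxF_{\beta}{}^{\gamma}+\MaxF_{\alpha\gamma}\nabla^{\alpha}\MaxF_{\beta}{}^{\gamma}$; the first piece vanishes by the source-free Maxwell equation, and the divergence of the trace term contributes $-\tfrac{1}{2}\MaxF^{\gamma\delta}\nabla_{\beta}\MaxF_{\gamma\delta}$. Raising one index in the remaining piece, the cyclic Bianchi identity $\nabla_{\alpha}\MaxF_{\beta\gamma}+\nabla_{\beta}\MaxF_{\gamma\alpha}+\nabla_{\gamma}\MaxF_{\alpha\beta}=0$ contracted against $\MaxF^{\alpha\gamma}$, together with the antisymmetry of $\MaxF$, shows that $\MaxF^{\alpha\gamma}\nabla_{\alpha}\MaxF_{\beta\gamma}=\tfrac{1}{2}\MaxF^{\gamma\delta}\nabla_{\beta}\MaxF_{\gamma\delta}$, producing the required cancellation.

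For the dominant energy condition (Property 1), I would reduce to the case where $\vecX$ is a unit future-directed timelike vector by rescaling, and work in a pointwise orthonormal frame with $e_0=\vecX$. Using the electric and magnetic vectors $\Electric_{a}=\MaxF_{a0}$ and $\Magnetic_{a}=\tfrac{1}{2}\LeviCivita_{abc}\MaxF^{bc}$ relative to this frame, a direct calculation gives
\begin{align*}
\EMS[\MaxF]_{00}&=\tfrac{1}{2}\bigl(|\Electric|^{2}+|\Magnetic|^{2}\bigr), &
\EMS[\MaxF]_{0a}&=(\Electric\times\Magnetic)_{a}.
\end{align*}
The elementary inequalities $|\Electric\times\Magnetic|\leq |\Electric||\Magnetic|\leq\tfrac{1}{2}(|\Electric|^{2}+|\Magnetic|^{2})$ imply $\EMS[\MaxF]_{00}^{2}\geq \sum_{a}\EMS[\MaxF]_{0a}^{2}$, so $\EMS[\MaxF]_{\alpha\beta}\vecX^{\beta}$ is a future-directed causal covector. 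Pairing with any future-directed timelike $\vecY$ then yields a nonnegative quantity.

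The main obstacle is Property 1: every term in the proof is classical, but care is required to organise the algebra into a form in which nonnegativity is manifest. The computation above makes this reduction explicit, and the key inequality $|\Electric\times\Magnetic|\leq\tfrac{1}{2}(|\Electric|^{2}+|\Magnetic|^{2})$ (Cauchy-Schwarz combined with AM-GM) is what ultimately guarantees the dominant energy property. An alternative route, which would be useful elsewhere in the paper, is to insert the null decomposition from Subsection \ref{ss:RegularityOfTheNullDecomposition}: expressing $\EMS[\MaxF]$ in the Carter tetrad in terms of $\phipm$ and $\phiz$ produces a sum of manifestly nonnegative squares in the frame components, giving the same conclusion and linking the stress-energy tensor to the spin-weighted quantities used in the later Morawetz arguments.
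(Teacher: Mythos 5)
Your argument is correct. The paper does not actually supply a proof of this theorem: the subsection opens by declaring the material well known and citing Christodoulou--Klainerman, and the theorem is stated without a proof environment. Your calculations are the standard ones and they check out. Symmetry and trace-freeness are exactly as you say; for the divergence, your contraction of the cyclic Bianchi identity against $\MaxF^{\alpha\gamma}$ correctly produces $\MaxF^{\alpha\gamma}\nabla_{\alpha}\MaxF_{\beta\gamma}=\tfrac12\MaxF^{\gamma\delta}\nabla_{\beta}\MaxF_{\gamma\delta}$, which cancels the contribution from the trace term. For the dominant energy condition, reducing to a unit $\vecX$ and computing $\EMS_{00}=\tfrac12(|\Electric|^2+|\Magnetic|^2)$ and $\EMS_{0a}=(\Electric\times\Magnetic)_a$ in an adapted orthonormal frame is the classical route, and the estimate $|\Electric\times\Magnetic|\le\tfrac12(|\Electric|^2+|\Magnetic|^2)$ shows $\EMS_{\alpha\beta}\vecX^{\beta}$ is future-directed causal, so pairing against any future timelike $\vecY$ is nonnegative. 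Your remark about an alternative route via the Carter tetrad is apt: the very next displayed result in the paper, theorem~\ref{lemma:MaxFEMSComponents}, records $\EMS(\Tnormalised,\Tnormalised)=\sum_i|\phii|^2$ and the other null-frame components, which makes the dominant energy condition manifest in exactly the decomposition used throughout the later Morawetz estimates; so your suggested alternative is, in effect, the point of view the paper adopts for everything downstream of this theorem.
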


\begin{corollary}[Energy generation properties]
Let $\MaxF\in\wedge^2$ be a regular solution of the Maxwell equation \eqref{eq:Maxwell} and $\vecX$ be a regular vector field. 
\begin{enumerate}
\item{} [Energy Generation 1] If $\vecX$ is time-like and future oriented, then $\forall t\in\Reals:$ 
\begin{align*}
\GenEnergy{\vecX}[\MaxF](t)\geq0 .
\end{align*}
\item{} [Energy Generation 2] If $t_2>t_1$, then 
\begin{align*}
\GenEnergy{\vecX}[\MaxF](t_2)-\GenEnergy{\vecX}[\MaxF](t_1)
&=\GenBulk{\vecX}[\MaxF](t_1,t_2) .
\end{align*}
\end{enumerate}
\end{corollary}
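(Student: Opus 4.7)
The plan is to derive both conclusions directly from the four algebraic and differential properties of $\EMS[\MaxF]$ collected in the preceding theorem, together with the explicit formula for the normal to $\hst{t}$ and the description of the boundary of $\BulkZone{t_1}{t_2}$ in subsection~\ref{ss:Foliations}.

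For Energy Generation~1, the key observation is that the formula for $\diNormal{\alpha}{\hst{t}}$ displayed in subsection~\ref{ss:Foliations} factors as $\vecTPerp^{\alpha}$ times the scalar measure $(\KPi/\KDelta)\sin\theta\,\di r\,\di\theta\,\di\phi$, which is positive in the exterior. Since $\vecTPerp$ is time-like and future-pointing in the exterior, as recorded in subsection~\ref{ss:TimelikeVectors}, the integrand of $\GenEnergy{\vecX}[\MaxF](t)$ is, up to this positive factor, equal to $\EMS[\MaxF]_{\alpha\beta}\vecTPerp^{\alpha}\vecX^{\beta}$. The dominant energy condition, applied with $\vecY=\vecTPerp$, then gives pointwise non-negativity of the integrand, and hence of $\GenEnergy{\vecX}[\MaxF](t)$.

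For Energy Generation~2, the plan is to apply the divergence theorem to $\GenMomentum{\vecX}[\MaxF]^{\alpha}=\EMS[\MaxF]_{\alpha\beta}\vecX^{\beta}$ over $\BulkZone{t_1}{t_2}$. Expanding
\begin{align*}
\nabla^{\alpha}\bigl(\EMS[\MaxF]_{\alpha\beta}\vecX^{\beta}\bigr)
&= \bigl(\nabla^{\alpha}\EMS[\MaxF]_{\alpha\beta}\bigr)\vecX^{\beta}+\EMS[\MaxF]_{\alpha\beta}\nabla^{\alpha}\vecX^{\beta},
\end{align*}
the first term vanishes by the divergence-free property, while the symmetry of $\EMS[\MaxF]$ lets the second term be rewritten as $\EMS[\MaxF]_{\alpha\beta}\nabla^{(\alpha}\vecX^{\beta)}$; integrating over $\BulkZone{t_1}{t_2}$ with respect to $\diFourNatural$ thus produces exactly $\GenBulk{\vecX}[\MaxF](t_1,t_2)$. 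The boundary of $\BulkZone{t_1}{t_2}$ in the conformally compactified setting is $\hst{t_2}\cup(-\hst{t_1})\cup(-\bif)\cup\spacelikeinfty$, so the surface contributions along $\hst{t_2}$ and $\hst{t_1}$ reproduce $\GenEnergy{\vecX}[\MaxF](t_2)-\GenEnergy{\vecX}[\MaxF](t_1)$ once the orientations of the normals are accounted for.

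The main obstacle is verifying that the contributions at $\bif$ and $\spacelikeinfty$ genuinely vanish. Here I would invoke the remark in subsection~\ref{ss:Foliations} that these limit sets are $2$-dimensional rather than full $3$-hypersurfaces, so the $3$-flux of any regular vector field through them is automatically zero. One still needs $\EMS[\MaxF]$ to extend regularly enough to the compactified boundary for this to make sense: for the charge-free component of $\MaxF$, the vanishing on $\hst{0}$ for large $r$ built into definition~\ref{def:FRegular}, combined with finite propagation speed, confines the support of $\EMS[\MaxF]$ away from $\spacelikeinfty$ on the finite time interval $[t_1,t_2]$; for the stationary Coulomb component, the $O(r^{-4})$ pointwise decay of $\EMS[\MaxF]$ more than suffices for the relevant three-dimensional flux to vanish in the compactified limit. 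Once these two boundary pieces are discarded, the identity $\GenEnergy{\vecX}[\MaxF](t_2)-\GenEnergy{\vecX}[\MaxF](t_1)=\GenBulk{\vecX}[\MaxF](t_1,t_2)$ drops out of the divergence theorem.
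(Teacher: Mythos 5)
Your proposal is correct and takes essentially the same route as the paper: Energy Generation 1 by combining the dominant energy condition with the fact that the normal direction $\vecTPerp$ appearing in $\diNormal{\alpha}{\hst{t}}$ is time-like and future-pointing, and Energy Generation 2 by the divergence theorem together with symmetry and divergence-freeness of $\EMS$ and the vanishing of fluxes through $\bif$ and $\spacelikeinfty$. The extra justification you offer for the vanishing boundary terms (finite propagation speed for the charge-free part, explicit decay for the Coulomb part) is more detail than the paper supplies — the paper simply appeals to regularity of $\GenMomentum{\vecX}[\MaxF]$ and the observation in subsection~\ref{ss:Foliations} that $\bif$ and $\spacelikeinfty$ are $2$-surfaces — but it is consistent with, and fills in, the paper's terser argument.
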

\begin{proof}
The first property follows from the normal to $\hst{t}$ being time-like and future oriented and from $\EMS$ satisfying the dominant energy condition. The second follows from the divergence theorem, the divergence free property of $\EMS$, and the vanishing of the fluxes through $\bif$ and $\spacelikeinfty$, which follows from the regularity of $\MaxF$, $\vecX$, and, hence, $\GenMomentum{\vecX}[\MaxF]$. 
\end{proof}

Because of the trace-free property of the energy-momentum tensor for the Maxwell field, there is the following formula for the bulk term, which frequently simplifies calculations. 

\begin{theorem}
If $\MaxF\in\wedge^2$ is regular, $\vecX$ is a regular vector field, $t_1<t_2$, and $\Omega$ is a regular function that is positive in the exterior of the Kerr space-time, then
\begin{align*}
\GenBulk{\vecX}(t_1,t_2)=-\frac12\int_{\BulkZone{t_1}{t_2}} \Omega^{-2}\EMS[\MaxF]_{\alpha\beta}\Lie_{\vecX}(\Omega^2\gMetric^{\alpha\beta}) \diFourNatural. 
\end{align*}
\end{theorem}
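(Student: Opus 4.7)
The plan is a short direct calculation at the level of the integrand, exploiting only the symmetry and trace-free property of $\EMS[\MaxF]$.

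First I would recall the standard identity that the symmetrized covariant derivative of $\vecX$ is (up to sign and a factor of $2$) the Lie derivative of the inverse metric:
\begin{align*}
\Lie_{\vecX}\gMetric^{\alpha\beta} = -2\nabla^{(\alpha}\vecX^{\beta)}.
\end{align*}
This follows from $\Lie_{\vecX}\gMetric_{\alpha\beta}=2\nabla_{(\alpha}\vecX_{\beta)}$ together with $\Lie_{\vecX}(\gMetric^{\alpha\gamma}\gMetric_{\gamma\beta})=\Lie_{\vecX}\delta^\alpha{}_\beta=0$. Combined with symmetry of $\EMS$, this immediately gives
\begin{align*}
\EMS[\MaxF]_{\alpha\beta}\nabla^{(\alpha}\vecX^{\beta)}
= -\tfrac12 \EMS[\MaxF]_{\alpha\beta}\Lie_{\vecX}\gMetric^{\alpha\beta}.
\end{align*}

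Next I would absorb the conformal factor $\Omega$ using Leibniz for the Lie derivative. Since $\Omega^2$ is a scalar,
\begin{align*}
\Omega^{-2}\Lie_{\vecX}\bigl(\Omega^2 \gMetric^{\alpha\beta}\bigr)
= \Omega^{-2}\vecX(\Omega^2)\,\gMetric^{\alpha\beta} + \Lie_{\vecX}\gMetric^{\alpha\beta}.
\end{align*}
Contracting with $\EMS[\MaxF]_{\alpha\beta}$, the first term vanishes because $\EMS[\MaxF]_{\alpha\beta}\gMetric^{\alpha\beta}=\EMS[\MaxF]_\alpha{}^\alpha=0$ by the trace-free property stated in the preceding theorem. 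Hence
\begin{align*}
\Omega^{-2}\EMS[\MaxF]_{\alpha\beta}\Lie_{\vecX}\bigl(\Omega^2 \gMetric^{\alpha\beta}\bigr)
= \EMS[\MaxF]_{\alpha\beta}\Lie_{\vecX}\gMetric^{\alpha\beta}
= -2\,\EMS[\MaxF]_{\alpha\beta}\nabla^{(\alpha}\vecX^{\beta)}.
\end{align*}

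Finally, integrating this pointwise identity over $\BulkZone{t_1}{t_2}$ against the volume form $\diFourNatural$ and comparing with the definition of $\GenBulk{\vecX}[\MaxF](t_1,t_2)$ yields the claimed formula. There is no analytic obstacle here — the only thing one must verify is that $\EMS[\MaxF]$, $\vecX$, and $\Omega$ are sufficiently regular for the pointwise identity to hold everywhere in the exterior and for the integrals to make sense, which is guaranteed by the hypotheses (regular $\MaxF$, regular $\vecX$, and regular, strictly positive $\Omega$). The key conceptual point is simply that inserting $\Omega^2$ in front of $\gMetric^{\alpha\beta}$ before taking the Lie derivative costs nothing because of the trace-free property; this is the reason the formula is useful in practice, as it allows one to absorb conformal weights into the deformation tensor when computing bulk terms.
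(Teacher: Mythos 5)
Your proof is correct and follows essentially the same route as the paper: recall $\Lie_{\vecX}\gMetric^{\alpha\beta}=-2\nabla^{(\alpha}\vecX^{\beta)}$, split off the conformal factor by Leibniz, and use the trace-free property of $\EMS[\MaxF]$ to kill the $\gMetric^{\alpha\beta}$ term. The paper writes the Leibniz step in the reverse direction but the content is identical.
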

\begin{proof}
It is well known that 
\begin{align*}
\nabla^{(\alpha}\vecX^{\beta)}=-(1/2)\Lie_{\vecX}\gMetric^{\alpha\beta}. 
\end{align*}
Since $\Omega^2$ is positive in the exterior of the Kerr space-time, one can write $\Lie_{\vecX}g^{\alpha\beta}=(\vecX\Omega^{-2})\Omega^2\gMetric_{\alpha\beta}+\Omega^{-2}\Lie_{\vecX}(\Omega^2\gMetric^{\alpha\beta})$. Since $\EMS$ is trace free, one finds the contraction with $\EMS$ of the first of these terms is $\EMS_{\alpha\beta}\Lie_{\vecX}\gMetric^{\alpha\beta}=\EMS_{\alpha\beta}\Omega^{-2}\Lie_{\vecX}(\Omega^2\gMetric^{\alpha\beta})$. 
\end{proof}

\begin{theorem}[Components of Maxwell $\EMS$]
\label{lemma:MaxFEMSComponents}
If $\MaxF\in\wedge^2$, then
\begin{align*}
\EMS(\vecLPNV,\vecLPNV)&=2|\phip|^2, \\
\EMS(\vecLPNV,\vecNPNV)&=|\phiz|^2, \\
\EMS(\vecNPNV,\vecNPNV)&=2|\phim|^2, \\
\EMS(\Tnormalised,\Tnormalised)&=\sum_i|\phii|^2, \\
\EMS(\Rnormalised,\Rnormalised)&=\left(|\phipm|^2-|\phiz|^2\right) \\
\EMS_{\alpha\beta}\left(\Hnormalised^\alpha\Hnormalised^\beta+\Pnormalised^\alpha\Pnormalised^\beta\right)&=2|\phiz|^2.
\end{align*}
\end{theorem}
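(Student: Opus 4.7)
The plan is to evaluate each component by unpacking the definition
$\EMS_{\alpha\beta}=\MaxF_{\alpha\gamma}\MaxF_{\beta}{}^\gamma-\frac14\gMetric_{\alpha\beta}\MaxF_{\gamma\delta}\MaxF^{\gamma\delta}$ in the Carter tetrad $\{\vecLPNV,\vecNPNV,\vecMPNV,\vecMbPNV\}$. The first step is to record the tetrad orthogonality relations
$\gMetric(\vecLPNV,\vecNPNV)=-1$, $\gMetric(\vecMPNV,\vecMbPNV)=1$, with all other inner products vanishing (these follow directly from the definitions of $\vecLPNV,\vecNPNV,\vecMPNV$ in terms of $\Tnormalised,\Rnormalised,\Hnormalised,\Pnormalised$ and the fact that the latter are mutually orthogonal unit vectors). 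This gives the completeness relation
\begin{equation*}
\gMetric^{\alpha\beta}=-\vecLPNV^{\alpha}\vecNPNV^{\beta}-\vecNPNV^{\alpha}\vecLPNV^{\beta}+\vecMPNV^{\alpha}\vecMbPNV^{\beta}+\vecMbPNV^{\alpha}\vecMPNV^{\beta},
\end{equation*}
which I will use to expand each index contraction $\MaxF_{\alpha\gamma}\MaxF_{\beta}{}^{\gamma}$ as a sum of four products of tetrad components of $\MaxF$.

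The second step is to handle the null contractions $\EMS(\vecLPNV,\vecLPNV)$ and $\EMS(\vecNPNV,\vecNPNV)$. Since $\vecLPNV,\vecNPNV$ are null, the Lagrangian trace term drops out. In the first term, antisymmetry of $\MaxF$ kills the $\vecLPNV\vecNPNV$ pieces, leaving only the mixed $\vecMPNV,\vecMbPNV$ contribution, which by the definition $\phip=2\MaxF[\vecLPNV,\vecMPNV]$ (and its conjugate for $\MaxF[\vecLPNV,\vecMbPNV]$) produces the claimed $|\phip|^{2}$. The same argument with $\vecNPNV$ gives $|\phim|^{2}$.

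The third step is the mixed contraction $\EMS(\vecLPNV,\vecNPNV)$. Here both the first term and the Lagrangian term contribute, and one must compute the scalar $\MaxF_{\gamma\delta}\MaxF^{\gamma\delta}$ explicitly in the tetrad. Splitting the Maxwell components into the real $\MaxF_{LN}$ and pure imaginary $\MaxF_{M\bar M}$ pieces (which together form $\phiz=\sqrt{2}(\MaxF[\vecLPNV,\vecNPNV]+\MaxF[\Hnormalised,\Pnormalised])$) and the complex $\MaxF_{LM},\MaxF_{NM}$ pieces (which are proportional to $\phip$ and $\phim$), the $\phip\phim$ cross terms coming from the two contributions cancel against each other, leaving only $|\phiz|^{2}$. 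This is the calculation most prone to arithmetic slips and is the main obstacle of the lemma.

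The final step is purely linear algebra. Using bilinearity of $\EMS$ together with $\vecLPNV,\vecNPNV=(\Tnormalised\pm\Rnormalised)/\sqrt{2}$, the identities for $\EMS(\Tnormalised,\Tnormalised)$ and $\EMS(\Rnormalised,\Rnormalised)$ follow by summing and differencing the three null-frame formulas. For the bundle trace, the observation that
$\Hnormalised^{\alpha}\Hnormalised^{\beta}+\Pnormalised^{\alpha}\Pnormalised^{\beta}=\vecMPNV^{\alpha}\vecMbPNV^{\beta}+\vecMbPNV^{\alpha}\vecMPNV^{\beta}$
reduces the statement to computing $\EMS(\vecMPNV,\vecMbPNV)$, which is handled by the same tetrad expansion as in the third step and yields $|\phiz|^{2}$ after the Lagrangian term $-\frac14 g(\vecMPNV,\vecMbPNV)\MaxF_{\gamma\delta}\MaxF^{\gamma\delta}$ combines with the first term. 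The expected difficulty is entirely bookkeeping: keeping track of signs, factors of $\sqrt{2}$, and the conjugation relations between $\MaxF_{LM},\MaxF_{L\bar M}$ and between $\MaxF_{NM},\MaxF_{N\bar M}$ imposed by the reality of $\MaxF$.
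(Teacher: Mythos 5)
Your method — recording the Carter-tetrad inner products $\gMetric(\vecLPNV,\vecNPNV)=-1$, $\gMetric(\vecMPNV,\vecMbPNV)=1$, using the completeness relation to expand $\MaxF_{\alpha\gamma}\MaxF_{\beta}{}^{\gamma}$, and exploiting nullity and antisymmetry for the $l$-$l$, $n$-$n$ cases — is precisely the ``direct computation'' the paper declines to write out, so the structure of what you propose matches the paper's intent. The two remarks below concern accuracy of execution rather than correctness of approach.

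First, your claim that the $\vecMPNV$-$\vecMbPNV$ contribution ``produces the claimed $|\phip|^{2}$'' does not match the stated result $\EMS(\vecLPNV,\vecLPNV)=2|\phip|^{2}$; you have silently dropped the factor of $2$. Moreover, if you carry out the expansion you describe with the normalizations actually written in the paper, namely $\vecLPNV=(\Tnormalised+\Rnormalised)/\sqrt{2}$ and $\phip=2\MaxF[\vecLPNV,\vecMPNV]$, you get
\begin{equation*}
\EMS(\vecLPNV,\vecLPNV)=2\,\MaxF(\vecLPNV,\vecMPNV)\,\MaxF(\vecLPNV,\vecMbPNV)=2\cdot\tfrac{\phip}{2}\cdot\tfrac{\bar{\phip}}{2}=\tfrac12|\phip|^{2},
\end{equation*}
which disagrees with the statement of the lemma by an overall constant. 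The same constant reappears in every line (they are all mutually consistent via $\vecLPNV,\vecNPNV=(\Tnormalised\pm\Rnormalised)/\sqrt{2}$ and trace-freeness), so nothing structural is at stake, but you should decide explicitly which normalization of $\phii$ and of $\EMS$ you are working with, and track the $\sqrt{2}$'s and $2$'s through; as written, your paragraph asserts a constant that matches neither your own computation nor the theorem.

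Second, your key observation for the mixed case — that the $\Re(\phip\phim)$ cross terms arising from $\MaxF_{\alpha\gamma}\MaxF_{\beta}{}^{\gamma}\vecLPNV^{\alpha}\vecNPNV^{\beta}$ and from the Lagrangian term $\frac14 g(\vecLPNV,\vecNPNV)\MaxF_{\gamma\delta}\MaxF^{\gamma\delta}$ cancel — is correct and is indeed the only nontrivial part of the lemma. For the final identity, note that once you have the first five lines you can dispense with a separate computation of $\EMS(\vecMPNV,\vecMbPNV)$: the trace-free property $\EMS_{\alpha}{}^{\alpha}=0$ gives $\EMS(\Hnormalised,\Hnormalised)+\EMS(\Pnormalised,\Pnormalised)=\EMS(\Tnormalised,\Tnormalised)-\EMS(\Rnormalised,\Rnormalised)=2|\phiz|^{2}$ immediately, which is shorter than re-expanding the tetrad sum.
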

\begin{proof}Direct computation.\end{proof}

\subsection{Almost energy conservation for the Maxwell equation}
\label{ss:MaxwellACE}
Recall the crude Maxwell energy from the introduction
\begin{align*}
\EnergyCrudeF(t)&=\inthst{t} \sum_i|\phii|^2 \diThreeNaive
\end{align*}
and the blended time-like vector field $\vecTBlend$ from subsection \ref{ss:TimelikeVectors}. 

\begin{lemma}[Energy equivalence for the Maxwell field]
\label{lemma:MaxwellEnergyEquivalence}
There are positive constants $\epsilonSlowRotationIntro,C>0$ such that $\forall M>0,a\in[-\epsilonSlowRotationIntro M,\epsilonSlowRotationIntro M]$ one has that if $\MaxF\in\wedge^2$ is regular and $t\in\Reals$, then
\begin{align*}
\left(1-C\frac{|a|}{M}\right)\GenEnergy{\vecTBlend}[\MaxF](t)
\leq \EnergyCrudeF(t)
\leq\left(1+C\frac{|a|}{M}\right)\GenEnergy{\vecTBlend}[\MaxF](t) .
\end{align*}
That is, $\EnergyCrudeF$ and $\GenEnergy{\vecTBlend}$ are $a$ almost equivalent. 
\end{lemma}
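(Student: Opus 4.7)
The plan is to establish the equivalence at the level of integrands, since by definition both $\EnergyCrudeF(t)$ and $\GenEnergy{\vecTBlend}[\MaxF](t)$ are integrals over $\hst{t}$. Using the expression for $\diNormal{\alpha}{\hst{t}}$ derived in subsection \ref{ss:Foliations}, one rewrites
\begin{align*}
\GenEnergy{\vecTBlend}[\MaxF](t)
&= \inthst{t} \EMS[\MaxF](\vecTBlend,\vecTperp) \frac{\KPi}{\KDelta}\sin\theta\, dr\, d\theta\, d\phi ,
\end{align*}
so the lemma reduces to showing that, uniformly in the exterior and for every regular $\MaxF$, the pointwise ratio of $\EMS[\MaxF](\vecTBlend,\vecTperp)\,\KPi/\KDelta$ to $r^2\sum_i|\phii|^2$ differs from $1$ by at most $C|a|/M$.

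First, I would reduce everything to $\EMS(\vecTPNV,\vecTPNV)$ by writing $\vecTBlend - \vecTPNV = (\omegaBlend - \omegaPNV)\pp$ and $\vecTperp - \vecTPNV = (\omegaPerp - \omegaPNV)\pp$, with the two differences of angular velocities controlled by $|a|/M^2$, and then expanding bilinearly. Using $\vecTPNV = (\sqrt{\KSigma\KDelta}/(r^2+a^2))\Tnormalised$ together with theorem \ref{lemma:MaxFEMSComponents}, the main term becomes
\begin{align*}
\EMS(\vecTPNV,\vecTPNV)\frac{\KPi}{\KDelta}
&= \frac{\KSigma\KPi}{(r^2+a^2)^2}\sum_i|\phii|^2 .
\end{align*}
Since $\KSigma = r^2 + a^2\cos^2\theta$ and $\KPi = (r^2+a^2)^2 - a^2\KDelta\sin^2\theta$, the coefficient $\KSigma\KPi/(r^2+a^2)^2$ is a homogeneous rational function of maximal degree in $r$, $M$, $a$, $a\cos\theta$ which equals $r^2$ identically when $a=0$; by the remarks in subsection \ref{ss:Smallness} it therefore differs from $r^2$ by at most $(C|a|/M)\cdot r^2$ uniformly on the closure of the exterior.

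Second, for the cross terms I would expand $\pp$ in the Carter tetrad via $\pp = \sqrt{\KSigma}\sin\theta\,\Pnormalised - a\sin^2\theta\,\pt$ and substitute the further decomposition of $\pt$ in the tetrad. Each of $\EMS(\vecTPNV,\pp)$ and $\EMS(\pp,\pp)$ then becomes an explicit quadratic form in the $\phii$ with coefficients that are homogeneous rational functions of maximal degree, bounded in the exterior. Multiplied by $(\omegaBlend - \omegaPNV)$ or $(\omegaPerp - \omegaPNV)$ and by $\KPi/\KDelta$, their contribution to the integrand is of the form $(|a|/M)\cdot p(r,M,a,a\cos\theta)\sum_i|\phii|^2$ for a positive homogeneous rational function $p$ of maximal degree that equals $r^2$ at $a=0$, hence is bounded by a constant multiple of $r^2$ for $|a|/M$ small.

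The main obstacle is the apparent blow-up of $\KPi/\KDelta$ as $r\to\rp$. This is however exactly compensated by the null degeneracy of $\vecTBlend$, $\vecTperp$, $\vecTPNV$ at the horizon: the factor $\KDelta/\KSigma$ coming from $\vecTPNV = (\sqrt{\KSigma\KDelta}/(r^2+a^2))\Tnormalised$ cancels the singular factor $\KPi/\KDelta$, and the cross terms inherit the same cancellation because $(\omegaBlend - \omegaPNV)$ and $(\omegaPerp - \omegaPNV)$ vanish at $r=\rp$ at the appropriate rate. Organising the argument through the classification of homogeneous rational functions of maximal degree from subsection \ref{ss:Smallness} keeps all these estimates uniform, and combining the three steps yields the two-sided $a$-almost bound claimed in the lemma.
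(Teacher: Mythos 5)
Your proposal follows essentially the same route as the paper's own proof: both reduce the energy integral to a quadratic form in $\EMS$ via the normal $1$-form for $\hst{t}$, decompose $\vecTBlend$ and $\vecTperp$ around $\vecTPNV$, control the diagonal term by observing that $\EMS(\vecTPNV,\vecTPNV)\,\KPi/\KDelta = \tfrac{\KSigma\KPi}{(r^2+a^2)^2}\sum_i|\phii|^2$ differs from $r^2\sum_i|\phii|^2$ by an $a$-small factor, and bound the cross terms using that $\omegaBlend-\omegaPNV$ and $\omegaPerp-\omegaPNV$ are $a$-small and degenerate at the appropriate rates together with the uniform bound $|\EMS(\vecX,\vecY)|\lesssim\sum_i|\phii|^2$ for normalised vectors. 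The only difference is cosmetic: you expand $\pp$ and $\vecTPNV$ explicitly in the Carter tetrad, whereas the paper simply invokes that $r^{-1}\pp$ and $\vecTPNV$ are bounded linear combinations of $\Tnormalised,\Rnormalised,\Hnormalised,\Pnormalised$. Your proof is correct.
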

\begin{proof}
First, observe that for $\vecX,\vecY\in\{\Tnormalised,\Rnormalised,\Hnormalised,\Pnormalised\}$, one has
\begin{align*}
|\EMS(\vecX,\vecY)| \lesssim \sum_i|\phii|^2 .
\end{align*}
Now, consider the $\vecTBlend$ energy and how the vectors in its definition differ from $\vecTPNV$:
\begin{align*}
\GenEnergy{\vecTBlend}[\MaxF](t)
&=\inthst{t} \EMS[\MaxF]_{\alpha\beta}\vecTBlend^\beta\diNormal{\alpha}{\hst{t}} \\
&=\inthst{t} \EMS_{\alpha\beta}\vecTBlend^\beta\vecTperp^\alpha\frac{\KPi}{\KDelta} \di r\diomega\\
&=\inthst{t} \EMS_{\alpha\beta}\vecTPNV^\beta\vecTPNV^\alpha\frac{\KPi}{\KDelta} \di r\diomega\\
&\quad+\inthst{t} \EMS_{\alpha\beta}(\omegaBlend-\omegaPNV)\pp^\beta\vecTPNV^\alpha\frac{\KPi}{\KDelta} \di r\diomega\\
&\quad+\inthst{t} \EMS_{\alpha\beta}(\omegaPerp-\omegaPNV)\pp^\beta\vecTPNV^\alpha\frac{\KPi}{\KDelta} \di r\diomega\\
&\quad+\inthst{t} \EMS_{\alpha\beta}(\omegaBlend-\omegaPNV)(\omegaPerp-\omegaPNV)\pp^\alpha\pp^\beta \frac{\KPi}{\KDelta} \di r\diomega.
\end{align*}
The difference between the factor $g(\vecTPNV,\vecTPNV)\KPi\KDelta^{-1}r^{-2}$ and $1$ is $a$ small. Thus, since $r^{-1}\pp$ and $\vecTPNV$ can be expressed as bounded linear combinations of $\Tnormalised$, $\Rnormalised$, $\Hnormalised$, and $\Pnormalised$ and since $\omegaBlend-\omegaPNV$ and $\omegaPerp-\omegaPNV$ vanish linearly in $\KDelta$ and quadratically in $r^{-1}$ and are $a$ small, one finds
\begin{align*}
|\GenEnergy{\vecTBlend}[\MaxF](t)-\EnergyCrudeF(t)|
&\lesssim \frac{|a|}{M}\inthst{t}\sum_i |\phii|^2 \diThreeNaive \\
&\lesssim \frac{|a|}{M}\EnergyCrudeF(t). 
\end{align*}
\end{proof}

\begin{remark}
\label{rem:EnergyDegeneracy}
While the $\phii$ are defined with respect to a nondegenerate basis, the energies $\EnergyCrudeF(t)$ and $\GenEnergy{\vecTBlend}[\MaxF](t)$ are both degenerate as $r\rightarrow\rp$. In $\EnergyCrudeF$, the volume form $\di r$ is degenerate. In $\GenEnergy{\vecTBlend}[\MaxF](t)$, the vector field $\vecTBlend$ is degenerate. Both $\di r$ and $\vecTBlend$ degenerate as $(\KDelta/r^2)^{1/2}$ as $r\rightarrow\rp$, which allows for the equivalence of the energies. 
\end{remark}

\begin{prop}[Almost energy conservation for the Maxwell field, core estimate \eqref{eq:CoreI}]
\label{prop:MaxwellACE}
\hypothesisA, 
\hypothesisFMax, 
then $\forall t_1<t_2$
\begin{align*}
\EnergyCrudeF(t_2)
\lesssim \frac{|a|}{M}\left(\BulkMorawetzF+\BulkFILowerOrder\right) +\EnergyCrudeF(t_1) .
\end{align*}
\end{prop}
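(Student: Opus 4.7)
The plan is to exploit the $a$-almost equivalence of $\EnergyCrudeF$ and $\GenEnergy{\vecTBlend}[\MaxF]$ from lemma \ref{lemma:MaxwellEnergyEquivalence} and reduce the problem to bounding the associated bulk term. By the Energy Generation 2 corollary with $\vecX=\vecTBlend$,
\begin{align*}
\GenEnergy{\vecTBlend}[\MaxF](t_2)
&=\GenEnergy{\vecTBlend}[\MaxF](t_1)+\GenBulk{\vecTBlend}[\MaxF](t_1,t_2),
\end{align*}
so it suffices to show $|\GenBulk{\vecTBlend}[\MaxF](t_1,t_2)|\lesssim\tfrac{|a|}{M}(\BulkMorawetzF+\BulkFILowerOrder)$.

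Since $\pt$ and $\pp$ are Killing for the Kerr metric and $\vecTBlend-\pt$ is a function of $r$ times $\pp$, a short computation using axisymmetry gives
\begin{align*}
\Lie_{\vecTBlend}\gMetric
=\frac{a\fnBlend'(r)}{\rp^2+a^2}\bigl(\di r\otimes\gMetric(\pp,\cdot)+\gMetric(\pp,\cdot)\otimes\di r\bigr),
\end{align*}
so the deformation tensor is supported on the radial shell $r\in[10M,11M]$, where $\KDelta$, $\KSigma$, $r^2+a^2$, and $\KPi$ are all comparable to $M^2$. Using $|\fnBlend'|\lesssim M^{-1}$, one has $|\Lie_{\vecTBlend}\gMetric^{\alpha\beta}|\lesssim|a|/M^2$ pointwise on this shell. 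Applying the trace-free reformulation of $\GenBulk{\vecTBlend}$ with $\Omega=1$, together with the pointwise bound $|\EMS[\MaxF]_{\alpha\beta}V^\alpha W^\beta|\lesssim\sum_i|\phii|^2$ for $V,W$ bounded linear combinations of $\{\Tnormalised,\Rnormalised,\Hnormalised,\Pnormalised\}$ (which follows from lemma \ref{lemma:MaxFEMSComponents}), one obtains
\begin{align*}
\bigl|\GenBulk{\vecTBlend}[\MaxF](t_1,t_2)\bigr|
\lesssim\frac{|a|}{M^2}\int_{t_1}^{t_2}\!\!\int_{10M}^{11M}\!\!\int_{\Stwo}\sum_i|\phii|^2\,\KSigma\,\di r\diomega\di t.
\end{align*}

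On the shell $[10M,11M]$, the weights $M\KDelta/(r^2+a^2)^2$ and $M/r^2$ appearing inside $\BulkMorawetzF$ and $\BulkFILowerOrder$ are each comparable to $M^{-1}$, so
\begin{align*}
\int_{t_1}^{t_2}\!\!\int_{10M}^{11M}\!\!\int_{\Stwo}\sum_i|\phii|^2\,\KSigma\,\di r\diomega\di t
\lesssim M\bigl(\BulkMorawetzF+\BulkFILowerOrder\bigr),
\end{align*}
with the $|\phipm|^2$ contributions absorbed into $\BulkMorawetzF$ and the $|\phiz|^2$ contribution into $\BulkFILowerOrder$. Combining the previous two displays yields the required bulk estimate, and lemma \ref{lemma:MaxwellEnergyEquivalence} converts the resulting inequality for $\GenEnergy{\vecTBlend}$ into the stated inequality for $\EnergyCrudeF$. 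The main difficulty is purely bookkeeping: one must check that the shell $[10M,11M]$, where $\vecTBlend$ fails to be Killing, is bounded away from the horizon, from infinity, and from the orbiting null geodesics near $r=3M$, so that the zero-order bulk integrands (without the degenerate cutoff $\chirfar$) already dominate $\sum_i|\phii|^2$ there and no loss of derivatives is incurred.
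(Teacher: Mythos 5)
Your argument is essentially identical to the paper's: apply the Energy Generation 2 formula with $\vecX=\vecTBlend$, observe that the deformation tensor is proportional to $a\fnBlend'$ and hence supported on the shell $r\in[10M,11M]$ where everything is comparable to powers of $M$, bound $|\EMS_{\alpha\beta}\nabla^{(\alpha}\vecTBlend^{\beta)}|$ pointwise by $(|a|/M)\sum_i|\phii|^2$ times the Morawetz weight, and close via lemma \ref{lemma:MaxwellEnergyEquivalence}. You are somewhat more explicit about the Lie derivative computation (and your bookkeeping of powers of $M$ is actually cleaner than the paper's, which drops a factor of $M$ in its pointwise bound), but the route and the key inputs are the same.
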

\begin{proof}
From the second energy generation property,
\begin{align*}
\GenEnergy{\vecTBlend}(t_2)-\GenEnergy{\vecTBlend}(t_1)
&=\int_{\BulkZone{t_1}{t_2}} \EMS_{\alpha\beta}\nabla^{(\alpha}\vecTBlend^{\beta)} \diFourNatural .
\end{align*}
Since $\pt$ and $\pp$ are Killing, the symmetric derivative of $\vecTBlend$ is $(\pr\fnBlend)a(\rp^2+a^2)^{-1}\pr^{(\alpha}\pp^{\beta)}$. Since $\pr\fnBlend$ is supported in $[10M,11M]$ and has a bounded derivative, it can be bounded by a multiple of any positive function. Since $\pr$ and $r^{-1}\pp$ can be expressed as bounded combinations of $\Tnormalised$, $\Rnormalised$, $\Hnormalised$, and $\Pnormalised$ for $r\in[10M,11M]$, one finds
\begin{align*}
\left| \EMS_{\alpha\beta} \nabla^{(\alpha}\vecTBlend^{\beta)}\right| 
&\lesssim \frac{|a|}{M} \frac{\KDelta}{(r^2+a^2)^2} \sum_i|\phii|^2 .
\end{align*}
Integrating this and the equivalence of $\EnergyCrudeF$ and $\GenEnergy{\vecTBlend}[\MaxF]$ provide the desired result. 
\end{proof}

\subsection{The Morawetz estimate for the Maxwell field}
\label{ss:MaxwellMorawetz}
\begin{definition}
Given $M>0$, $a\in[-M,M]$, the oversimplified Morawetz vector field is defined to be
\begin{align*}
\vecMorawetzF&=\fnMorawetzF\pr ,\\
\fnMorawetzF&=-\frac{\KDelta}{r^2+a^2} \left(1-\frac{3M}{r}\right)
\end{align*}
\end{definition}
The purpose of this section is to prove core estimate \eqref{eq:CoreII}. To the best of our knowledge, it is not possible to choose a vector field $\vecMorawetzF$ for which $\GenBulk{\vecMorawetzF}>0$ when $\MaxF\not=0$ and for which $\GenEnergy{\vecMorawetzF}\leq\GenEnergy{\vecTBlend}$. Instead, we choose one for which $\GenBulk{\vecMorawetzF}+\BulkFILowerOrder\geq\BulkMorawetzF$. Because we are willing to accept $\BulkFILowerOrder$ on the left, we can be quite naive in choosing $\vecMorawetzF$. 

\begin{lemma}[Bound for the Maxwell bulk term]
\hypothesisB, 
\hypothesisFMax,
then $\forall\FinalTime>0:$
\begin{align*}
C_1\GenBulk{\vecMorawetzF}+C_2\BulkFILowerOrder
&\geq\BulkMorawetzF .
\end{align*}
\end{lemma}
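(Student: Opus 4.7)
The strategy is to establish the bound pointwise at the level of integrands and then integrate over $\BulkZone{0}{\FinalTime}$. Using the trace-free form of the bulk integrand from the theorem earlier in this section, I have
\begin{align*}
\EMS_{\alpha\beta}\nabla^{(\alpha}\vecMorawetzF^{\beta)} = -\frac{1}{2}\EMS_{\alpha\beta}\Lie_{\vecMorawetzF}g^{\alpha\beta}.
\end{align*}
For $\vecMorawetzF = \fnMorawetzF\pr$ with $\fnMorawetzF$ a function of $r$ only, direct computation gives
\begin{align*}
\Lie_{\fnMorawetzF\pr}g^{\alpha\beta} = \fnMorawetzF\,\partial_r g^{\alpha\beta} - 2\fnMorawetzF'\,g^{r(\alpha}\delta^{\beta)}_r.
\end{align*}
Since only $g^{rr}$ is nonzero among the $g^{r\mu}$ in Boyer--Lindquist coordinates, contracting $\EMS$ with the second piece reduces to $\fnMorawetzF'\,g^{rr}\EMS_{rr} = \fnMorawetzF'\,\EMS(\Rnormalised,\Rnormalised)$, which equals $\fnMorawetzF'(|\phipm|^2 - |\phiz|^2)$ by lemma \ref{lemma:MaxFEMSComponents}.

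For the remaining piece $-(\fnMorawetzF/2)\EMS_{\alpha\beta}\partial_r g^{\alpha\beta}$, I expand $\partial_r g^{\alpha\beta}$ component by component and re-express $\EMS$ in the Carter tetrad using lemma \ref{lemma:MaxFEMSComponents} together with the trace-free identity $\EMS(\Tnormalised,\Tnormalised) = \sum_i|\phi_i|^2 = |\phipm|^2 + |\phiz|^2$. The diagonal pieces produce contributions of the schematic form $(\fnMorawetzF/r)(|\phipm|^2 + |\phiz|^2)$ and $\fnMorawetzF \KDelta r^{-3}(|\phipm|^2 - |\phiz|^2)$, while the Kerr off-diagonal $g^{t\phi}$ produces cross terms involving $\EMS$ on mixed tetrad pairs whose coefficients vanish linearly in $a$ and can be absorbed via Cauchy--Schwarz and the smallness of $|a|/M$. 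The integrand thus takes the form $A|\phipm|^2 + B|\phiz|^2 + E$, with $|E|$ bounded by $(|a|/M)\,M\KDelta(r^2+a^2)^{-2}(|\phipm|^2 + |\phiz|^2)$ up to an absorbable constant.

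The choice $\fnMorawetzF = -\KDelta(1-3M/r)/(r^2+a^2)$ is calibrated precisely so that $\fnMorawetzF$ vanishes to first order at both the horizon $r = \rp$ and the photon sphere $r = 3M$. A direct algebraic computation shows that, in the Schwarzschild limit, $A$ factors as $A = \frac{M\KDelta}{(r^2+a^2)^2}\,p(r,M)$ with $p$ a homogeneous rational function of maximal degree zero admitting uniform positive upper and lower bounds on $[\rp,\infty)$. By the stability principle in subsection \ref{ss:Smallness}, these bounds persist for $|a|/M$ sufficiently small, yielding $A \sim M\KDelta/(r^2+a^2)^2$ uniformly in $r$ and $\theta$. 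Meanwhile $|B| \lesssim M/r^2$, matching the weight of the integrand of $\BulkFILowerOrder$, so a sufficiently large $C_2$ absorbs any negative contribution from $B|\phiz|^2$ as well as the $|\phiz|^2$ portion of $E$, while the $|\phipm|^2$ portion of $E$ is reabsorbed into $A|\phipm|^2$ by shrinking $C_1$.

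The principal obstacle lies in the bookkeeping near the photon sphere $r = 3M$, where $\fnMorawetzF$ itself vanishes and the sign and magnitude of $A$ at this point are controlled entirely by the value of $\fnMorawetzF'(3M)$; the matching with the target weight $M\KDelta(r^2+a^2)^{-2}\big|_{r=3M}$ must be verified by direct computation of $\fnMorawetzF'$ there, with continuity then extending positivity to a uniform neighborhood. Once the pointwise inequality
\begin{align*}
C_1\bigl(A|\phipm|^2 + B|\phiz|^2 + E\bigr) + C_2\,\frac{M}{r^2}|\phiz|^2 \geq \frac{M\KDelta}{(r^2+a^2)^2}|\phipm|^2
\end{align*}
is established for some positive $C_1, C_2$ and $|a|/M$ sufficiently small, integrating both sides against $\diFourNatural$ over $\BulkZone{0}{\FinalTime}$ yields the lemma.
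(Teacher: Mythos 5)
Your approach—expanding $\Lie_{\fnMorawetzF\pr}g^{\alpha\beta}$ directly and contracting with the Carter-tetrad components of $\EMS$—is the same computation the paper carries out, except that the paper first applies the conformal rescaling $\Omega^{-2}=\KSigma\KDelta/(r^2+a^2)^2$; since $\EMS$ is trace-free, the two contractions differ only by this positive scalar, so your route is valid in principle. But the heart of the lemma is the positivity of the $|\phipm|^2$ coefficient, and you assert rather than establish it. Without the rescaling, the $|\phipm|^2$ terms in $\EMS_{\alpha\beta}\Lie_{\vecMorawetzF}g^{\alpha\beta}$ assemble from three sources: $\fnMorawetzF\,\partial_r g^{tt}$ and $\fnMorawetzF\,\partial_r g^{rr}$ (which, using $\EMS(\Tnormalised,\Tnormalised)+\EMS(\Rnormalised,\Rnormalised)=2|\phipm|^2$, produce $\frac{4M}{\KDelta}\fnMorawetzF$ in the Schwarzschild limit), and the $-2\fnMorawetzF'g^{rr}\EMS_{rr}$ term. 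The sum is $\frac{4M}{\KDelta}\fnMorawetzF-2\fnMorawetzF'$, and only after substituting $\fnMorawetzF=-(\KDelta/r^2)(1-3M/r)$ and computing $\fnMorawetzF'=-\frac{M}{r^2}(5-\frac{12M}{r})$ does one find the collapse to $\frac{6M\KDelta}{r^4}$. You write both that ``a direct algebraic computation shows'' the uniform positivity and that the key value ``must be verified''—it is never shown, and the cancellation is not transparent in your form. The paper's conformal factor makes it immediate: the rescaled $\pr^\alpha\pr^\beta$ coefficient in the Lie derivative is $-2(\KDelta/(r^2+a^2))^3\,\pr\bigl((r^2+a^2)\fnMorawetzF/\KDelta\bigr)$, and since $(r^2+a^2)\fnMorawetzF/\KDelta=-(1-3M/r)$ this is $\frac{6M}{r^2}(\KDelta/(r^2+a^2))^3$, manifestly positive. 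The ``oversimplified'' $\fnMorawetzF$ is calibrated precisely so that this rescaled quantity simplifies, and skipping the rescaling means you lose that structure.

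There is a second gap: the claim $|B|\lesssim M/r^2$ is false as stated. The $|\phiz|^2$ contribution from $\fnMorawetzF\,\partial_r g^{\theta\theta}$ and $\fnMorawetzF\,\partial_r g^{\phi\phi}$ decays only like $r^{-1}$ at infinity. What rescues the argument—and what the paper makes explicit—is that this $r^{-1}$-weighted piece equals $\fnMorawetzF\,\pr\potlL$ contracted against $\EMS_{\alpha\beta}(\vecH^\alpha\vecH^\beta+\vecPPNV^\alpha\vecPPNV^\beta)=2\KSigma|\phiz|^2$, and $\fnMorawetzF\,\pr\potlL$ is a positive multiple of $(1-3M/r)^2$: a perfect square with the good sign, needing no absorption by $\BulkFILowerOrder$ at all. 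Only the residual $|\phiz|^2$ term descending from $\EMS(\Rnormalised,\Rnormalised)=|\phipm|^2-|\phiz|^2$ in the $\pr\pr$ contraction has the bad sign, and it carries the weight $M\KDelta/(r^2+a^2)^2$, which $\BulkFILowerOrder$ can absorb. Your proposal never separates these two sources, so the absorption step is not justified as written.
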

\begin{proof}
Let
\begin{align*}
\Omega^{-2}&=\frac{\KSigma\KDelta}{(r^2+a^2)^2}, 
\end{align*}
so that
\begin{align*}
&\Omega^{-2}\KgMetric^{\alpha\beta}\\
&=\left(\frac{\KDelta}{r^2+a^2}\right)^2\pr^\alpha\pr^\beta
-\pt^\alpha\pt^\beta-\frac{4aMr}{(r^2+a^2)^2}\pt^{(\alpha}\pp^{\beta)}+\frac{\KDelta-a^2}{(r^2+a^2)^2}\pp^\alpha\pp^\beta +\potlL\OpQ^{\alpha\beta} \\
&=\left(\frac{\KDelta}{r^2+a^2}\right)^2\pr^\alpha\pr^\beta
-\pt^\alpha\pt^\beta\\
&\quad-\frac{2a}{r^2+a^2}\pt^{(\alpha}\pp^{\beta)}-\frac{a^2}{(r^2+a^2)^2}\pp^\alpha\pp^\beta +\potlL\left(\vecH^\alpha\vecH^\beta+\vecPPNV^\alpha\vecPPNV^\beta\right) 
\end{align*}
and
\begin{align*}
&\Lie_{\vecMorawetzF}(\Omega^{-2}\KgMetric)^{\alpha\beta}\\
&=\left(\fnMorawetzF\pr\left(\frac{\KDelta^2}{(r^2+a^2)^2}\right)-2\frac{\KDelta^2}{(r^2+a^2)^2}\pr\fnMorawetzF\right)\pr^\alpha\pr^\beta
+\fnMorawetzF\pr\potlL\left(\vecH^\alpha\vecH^\beta+\vecPPNV^\alpha\vecPPNV^\beta\right) \\
&\quad-\fnMorawetzF\pr\frac{2a}{r^2+a^2}\pt^{(\alpha}\pp^{\beta)}-\fnMorawetzF\pr\frac{a^2}{(r^2+a^2)^2}\pp^\alpha\pp^\beta\\
&=-2\left(\frac{\KDelta}{r^2+a^2}\right)^3\left(\pr\frac{r^2+a^2}{\KDelta}\fnMorawetzF\right)\pr^\alpha\pr^\beta 
+\fnMorawetzF\pr\potlL\left(\vecH^\alpha\vecH^\beta+\vecPPNV^\alpha\vecPPNV^\beta\right) \\
&\quad+\frac{4ar}{(r^2+a^2)^2}\left(\pt+\frac{a}{r^2+a^2}\pp\right)^{(\alpha}\pp^{\beta)} .
\end{align*}
The contraction with each of these terms with $\EMS$ can now be computed. 

Since
\begin{align*}
-2\left(\frac{\KDelta}{r^2+a^2}\right)^3\left(\pr\frac{r^2+a^2}{\KDelta}\fnMorawetzF\right)
&=\frac{6M}{r^2}\left(\frac{\KDelta}{r^2+a^2}\right)^3 ,
\end{align*}
and $\pr$ is $a$ almost equal to $(\KDelta/(r^2+a^2))^{1/2}\Rnormalised$, one finds
\begin{align*}
&\left|\left(-2\left(\frac{\KDelta}{r^2+a^2}\right)^3\left(\pr\frac{r^2+a^2}{\KDelta}\fnMorawetzF\right)\pr^\alpha\pr^\beta
-\frac{6M}{r^2}\left(\frac{\KDelta}{r^2+a^2}\right)^2\Rnormalised^\alpha\Rnormalised^\beta\right)\EMS_{\alpha\beta}\right|\\
&\qquad \lesssim \frac{|a|}{M} \frac{M}{r^2} \left(\frac{\KDelta}{r^2+a^2}\right)^2\sum_i|\phii|^2 .
\end{align*}
Thus, there are positive constants $C$, $D$, such that
\begin{align*}
&\left(-2\left(\frac{\KDelta}{r^2+a^2}\right)^3\left(\pr\frac{r^2+a^2}{\KDelta}\fnMorawetzF\right)\right)\pr^\alpha\pr^\beta\EMS_{\alpha\beta}\\
&\qquad\geq C\frac{M}{r^2}\left(\frac{\KDelta}{r^2+a^2}\right)^2|\phipm|^2 
-D\frac{M}{r^2}\left(\frac{\KDelta}{r^2+a^2}\right)^2|\phiz|^2 
\end{align*}

The potential $r^{-2}(1-2M/r)$ has a unique maximum at $r=3M$ where the second derivative is strictly negative. Thus, for $a$ sufficiently small, the potential $\KDelta(r^2+a^2)^{-2}=\potlL$ also has a unique maximum not more than a constant multiple of $a$ (in fact of $a^2/M$) from $r=3M$. Thus, except between this root and $3M$, the coefficient $\fnMorawetzF\pr\potlL$ is nonnegative and between the root and $3M$, this coefficient is bounded below by $|a|^2M^{-5}$. In particular, there are constants such that
\begin{align}
-\fnMorawetzF\pr\potlL
\geq C\frac{\KDelta}{r^2+a^2}\frac{1}{r^3}\left(1-\frac{3M}{r}\right)^2 -D\frac{|a|}{M}\left(\frac{\KDelta}{r^2+a^2}\right)\frac{1}{r^3} \left(\frac{M}{r}\right)^{100}. 
\label{eq:FMorawetzIntA}
\end{align}
(The exponent $100$ here has been chosen here for convenience. Away from $r=3M$, the first term on the right dominates, so that the behaviour of the second term on the right can be freely adjusted by making small adjustments in the constant $C$.) Thus, since $\EMS_{\alpha\beta}\left(\vecH^\alpha\vecH^\beta+\vecPPNV^\alpha\vecPPNV^\beta\right)=2\KSigma|\phiz|^2$, 
\begin{align}
\fnMorawetzF\pr\potlL&\left(\vecH^\alpha\vecH^\beta+\vecPPNV^\alpha\vecPPNV^\beta\right)\EMS_{\alpha\beta} \nonumber\\
&\geq \left(C\frac{\KDelta}{r^2+a^2}\frac{1}{r}\left(1-\frac{3M}{r}\right)^2 -D\frac{|a|}{M}\left(\frac{\KDelta}{r^2+a^2}\right)\frac{1}{r} \left(\frac{M}{r}\right)^{100}\right)|\phiz|^2 . 
\label{eq:FMorawetzIntB}
\end{align}

The remaining term in $\Lie_{\vecMorawetzF}(\Omega^2\KgMetric)^{\alpha\beta}$ is
\begin{align*}
\fnMorawetzF\frac{4ar}{(r^2+a^2)^2}\vecTPNV^{(\alpha}\pp^{\beta)} .
\end{align*}
The vector $\pp$ is a bounded linear combination of $\vecPPNV$ and $a\sin\theta\vecTPNV$. Because of the degeneracy of $\vecTPNV$, 
\begin{align*}
\EMS(\vecTPNV,\vecTPNV)&\lesssim \frac{\KDelta}{r^2+a^2}\sum|\phii|^2 .
\end{align*}
In the contraction, $\EMS_{\alpha\beta}\vecTPNV^\alpha\vecPPNV^\beta=\MaxF_{\alpha\gamma}\MaxF_\beta{}^\gamma\vecTPNV^\alpha\vecPPNV^\beta$, either $\gamma=\Rnormalised$, in which case $|\MaxF_{\vecTPNV\Rnormalised}|\lesssim\KDelta^{1/2}r^{-1}|\phiz|$ and $|\MaxF_{\vecPPNV\Rnormalised}|\lesssim r|\phipm|$, or $\gamma=\Hnormalised$, in which case  $|\MaxF_{\vecTPNV\Hnormalised}|\lesssim \KDelta^{1/2}r^{-1}|\phipm|$ and $|\MaxF_{\vecPPNV\Hnormalised}|\lesssim r|\phiz|$. Thus, 
\begin{align*}
|\EMS(\vecTPNV,\vecPPNV)|
&\lesssim \left(\frac{\KDelta}{r^2+a^2}\right)^{1/2} r|\EMS(\Tnormalised,\Hnormalised)| \\
&\lesssim \left(\frac{\KDelta}{r^2+a^2}\right)^{1/2} r|\phiz||\phipm| \\
&\lesssim r\left(|\phiz|^2 +\frac{\KDelta}{r^2+a^2}|\phipm|^2\right) , 
\end{align*}
so that
\begin{align*}
\left|\fnMorawetzF\frac{4ar}{(r^2+a^2)^2}\vecTPNV^{\alpha}\pp^\beta\EMS_{\alpha\beta}\right|
\lesssim \frac{|a|}{M}
\left(\frac{\KDelta}{r^2+a^2}\frac{M}{r^2}|\phiz|^2 +\left(\frac{\KDelta}{r^2+a^2}\right)^2\frac{M}{r^2}|\phipm|^2\right) .
\end{align*}
Combining this estimate with estimates \eqref{eq:FMorawetzIntA}-\eqref{eq:FMorawetzIntB} and applying the factor of $\Omega^{-2}$, one finds there are constants $C_1$ and $C_2$ such that
\begin{align*}
\GenBulk{\vecMorawetzF}
\geq C_1\int_{\BulkZone{t_1}{t_2}}\frac{M}{r^2}\frac{\KDelta}{r^2+a^2}|\phipm|^2\diFourNatural
-C_2\int_{\BulkZone{t_1}{t_2}}\frac{M}{r^2}\frac{\KDelta}{r^2+a^2}|\phiz|^2\diFourNatural. 
\end{align*}
\end{proof}

\begin{lemma}(Bound the Morawetz energy for the Maxwell field)
\hypothesisA,
\hypothesisFMax,
then $\forall t\in\Reals:$
\begin{align*}
\GenEnergy{\vecMorawetzF}\lesssim \GenEnergy{\vecTBlend} ,
\end{align*}
\end{lemma}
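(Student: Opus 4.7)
The plan is to reduce the desired inequality to the energy equivalence $\GenEnergy{\vecTBlend}\sim\EnergyCrudeF$ provided by lemma \ref{lemma:MaxwellEnergyEquivalence}, by proving the pointwise bound
\begin{align*}
|\EMS(\vecTperp,\vecMorawetzF)|\,\frac{\KPi}{\KDelta}\lesssim r^{2}\sum_i|\phii|^2
\end{align*}
on $\hst{t}$. Integrating this over $\hst{t}$ immediately gives $|\GenEnergy{\vecMorawetzF}(t)|\lesssim\EnergyCrudeF(t)\lesssim\GenEnergy{\vecTBlend}(t)$.

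Concretely, using the formula for $\diNormal{\alpha}{\hst{t}}$ from subsection \ref{ss:Foliations}, the flux integral rewrites as
\begin{align*}
\GenEnergy{\vecMorawetzF}(t)=\inthst{t}\fnMorawetzF\,\EMS(\vecTperp,\pr)\,\frac{\KPi}{\KDelta}\sin\theta\,\di r\di\theta\di\phi.
\end{align*}
I would substitute $\pr=\sqrt{\KSigma/\KDelta}\,\Rnormalised$ and, using $\vecTPNV=\frac{\sqrt{\KSigma\KDelta}}{r^2+a^2}\Tnormalised$ together with the bound $|\omegaPerp-\omegaPNV|\lesssim|a|\KDelta\,r^{-4}$ from subsection \ref{ss:TimelikeVectors}, decompose
\begin{align*}
\vecTperp=\frac{\sqrt{\KSigma\KDelta}}{r^2+a^2}\Tnormalised+(\omegaPerp-\omegaPNV)\pp.
\end{align*}
Invoking the pointwise tetrad estimate $|\EMS(X,Y)|\lesssim\sum_i|\phii|^2$ for $X,Y\in\{\Tnormalised,\Rnormalised,\Hnormalised,\Pnormalised\}$ (already established in the proof of lemma \ref{lemma:MaxwellEnergyEquivalence}), and noting that $\pp$ is a combination of $\Tnormalised$ and $\Pnormalised$ with tetrad coefficients of size $\lesssim r$, one obtains $|\EMS(\vecTperp,\pr)|\lesssim\sum_i|\phii|^2$. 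The weight is then estimated using $\KPi\leq(r^2+a^2)^2$, the uniform boundedness of $|1-3M/r|$ on $r\geq\rp$, and $r^2+a^2\sim r^2$ for $r\geq\rp$ under hypothesis \ref{def:asmall}:
\begin{align*}
|\fnMorawetzF|\,\frac{\KPi}{\KDelta}=\frac{\KPi}{r^2+a^2}\left|1-\frac{3M}{r}\right|\leq(r^2+a^2)\left|1-\frac{3M}{r}\right|\lesssim r^2.
\end{align*}
Multiplying the two pointwise bounds and integrating, followed by lemma \ref{lemma:MaxwellEnergyEquivalence}, closes the argument.

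The only delicate point is the pointwise estimate on $|\EMS(\vecTperp,\pr)|$: the factor $\sqrt{\KSigma/\KDelta}$ coming from $\pr$ diverges at $r=\rp$. This is rescued by the horizon-degeneration of $\vecTperp$, whose length vanishes as $(\KDelta/r^2)^{1/2}$ (subsection \ref{ss:TimelikeVectors}); quantitatively, the $\Tnormalised$-component contributes a compensating factor $\sqrt{\KSigma\KDelta}/(r^2+a^2)$ that cancels the divergence. The residual $(\omegaPerp-\omegaPNV)\pp$ contribution is $a$-small and carries an extra $\KDelta$, so it is uniformly dominated by the main term. Making this cancellation explicit is essentially the whole content of the proof.
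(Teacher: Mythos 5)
Your proof is correct and follows essentially the same route as the paper: the paper bounds the flux integrand by noting that $\vecMorawetzF=\fnMorawetzF\pr$ and $\vecTperp$ both degenerate relative to $\Rnormalised$ and $\Tnormalised$ at rate $(\KDelta/(r^2+a^2))^{1/2}$ near the horizon, giving $|\EMS(\vecMorawetzF,\vecTperp)|\lesssim(\KDelta/(r^2+a^2))\sum_i|\phii|^2$, and then multiplies by the weight $\KPi/\KDelta\sim r^2(r^2+a^2)/\KDelta$. You distribute the same factors slightly differently (pulling $|\fnMorawetzF|\KPi/\KDelta$ out as the weight and showing $|\EMS(\vecTperp,\pr)|\lesssim\sum_i|\phii|^2$), but this is only a bookkeeping choice; the underlying cancellation at the horizon and the final appeal to the energy-equivalence lemma are the same.
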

\begin{proof}
For $a=0$, the vector fields $(\KDelta/(r^2+a^2))\pr$ and $\pt$ degenerate relative to $\Rnormalised$ and $\Tnormalised$ at a rate of $(\KDelta/(r^2+a^2))^{1/2}$. For $a\not=0$, the additional difference between these can be written as $(|a|/M)(\KDelta/(r^2+a^2))$ times bounded linear combinations of the normalised basis vectors. Since $\fnMorawetzF=-(\KDelta/(r^2+a^2))(1-3M/r)\pr$ and the factor $-(1-3M/r)$ is bounded, one finds
one finds
\begin{align*}
\left|\GenEnergy{\vecMorawetzF}\right|
&\lesssim \inthst{t} \left|\EMS[\MaxF]_{\alpha\beta}\vecMorawetzF^\alpha\vecTperp^\beta\right| \frac{\KPi}{\KDelta} \diThreeNaiveFI \\
&\lesssim \inthst{t} \frac{\KDelta}{r^2+a^2} \sum_i|\phii|^2 \frac{\KPi}{\KDelta}\diThreeNaiveFI \\
&\lesssim \inthst{t}  \sum_i|\phii|^2 \frac{\KPi}{r^2+a^2}\diThreeNaiveFI \\
&\lesssim \GenEnergy{\vecTBlend} .
\end{align*}
\end{proof}

\begin{prop}(Morawetz estimate for the Maxwell field, core estimate \eqref{eq:CoreII}
\hypothesisA,
\hypothesisFMax,
then $\forall\FinalTime>0$,
\begin{align*}
\BulkMorawetzF\lesssim \EnergyCrudeF(\FinalTime)+\EnergyCrudeF(0) +\BulkFILowerOrder .
\end{align*}
\end{prop}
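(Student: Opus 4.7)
The plan is to assemble the two immediately preceding lemmas together with the energy equivalence lemma \ref{lemma:MaxwellEnergyEquivalence} by means of the divergence identity (Energy Generation 2) applied to the Morawetz vector field $\vecMorawetzF$. No new computation is needed; the work has been done upstream.

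First I would invoke the identity
\begin{align*}
\GenBulk{\vecMorawetzF}(0,\FinalTime) = \GenEnergy{\vecMorawetzF}[\MaxF](\FinalTime) - \GenEnergy{\vecMorawetzF}[\MaxF](0).
\end{align*}
This is legitimate because $\vecMorawetzF = \fnMorawetzF \pr$ is a regular vector field: its coefficient $\fnMorawetzF = -(\KDelta/(r^2+a^2))(1-3M/r)$ vanishes linearly in $\KDelta$ as $r\to\rp$, which together with the regularity of $\MaxF$ ensures the flux through $\bif$ vanishes, and the factor $(1-3M/r)$ keeps the coefficient bounded as $r\to\infty$, so the flux through $\spacelikeinfty$ also vanishes.

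Next, the preceding bulk lemma supplies constants $C_1, C_2$ with
\begin{align*}
\BulkMorawetzF \leq C_1\GenBulk{\vecMorawetzF}(0,\FinalTime) + C_2\BulkFILowerOrder.
\end{align*}
Substituting the divergence identity converts the right-hand side into the boundary differences $\GenEnergy{\vecMorawetzF}[\MaxF](\FinalTime) - \GenEnergy{\vecMorawetzF}[\MaxF](0)$ plus $\BulkFILowerOrder$. The preceding lemma on the Morawetz energy gives $|\GenEnergy{\vecMorawetzF}[\MaxF](t)| \lesssim \GenEnergy{\vecTBlend}[\MaxF](t)$ for every $t$, and the energy equivalence lemma \ref{lemma:MaxwellEnergyEquivalence} gives $\GenEnergy{\vecTBlend}[\MaxF](t) \sim \EnergyCrudeF(t)$ once $|a|/M$ is sufficiently small. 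Combining,
\begin{align*}
\bigl|\GenEnergy{\vecMorawetzF}[\MaxF](\FinalTime) - \GenEnergy{\vecMorawetzF}[\MaxF](0)\bigr| \lesssim \EnergyCrudeF(\FinalTime) + \EnergyCrudeF(0),
\end{align*}
which yields the stated estimate $\BulkMorawetzF \lesssim \EnergyCrudeF(\FinalTime) + \EnergyCrudeF(0) + \BulkFILowerOrder$.

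There is no real obstacle to overcome at this stage, since each ingredient has been isolated by the preceding two lemmas; the only point to be careful about is that $\vecMorawetzF$ is not timelike, so $\GenEnergy{\vecMorawetzF}$ is not a priori signed and one must bound it in absolute value by $\GenEnergy{\vecTBlend}$. This was exactly the content of the preceding Morawetz-energy bound lemma, which exploited the fact that $\fnMorawetzF$ degenerates at the horizon at the same rate as $\vecTBlend$ does, matching the degeneracy already present in $\EnergyCrudeF$ (cf.\ remark \ref{rem:EnergyDegeneracy}). With that in hand, the proof is a one-line combination.
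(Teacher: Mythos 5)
Your proof is correct and follows the same route the paper takes: combine the bulk lower bound and the Morawetz-energy upper bound via the energy generation identity for $\vecMorawetzF$, then pass to $\EnergyCrudeF$ through lemma \ref{lemma:MaxwellEnergyEquivalence}. The paper compresses this into a single sentence, but the logical content is identical, including the observation that $\GenEnergy{\vecMorawetzF}$ must be controlled in absolute value since $\vecMorawetzF$ is not causal.
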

\begin{proof}
This follows from the previous two lemmas and the energy generation formula for the Maxwell field. 
\end{proof}

\section{The energy estimate for the Fackerell-Ipser equation}
\label{s:FI}
\subsection{The Fackerell-Ipser equation}
The rescaled spin-weight zero component
\begin{align*}
\solu&= \Kp \phiz
\end{align*}
satisfies the Fackerell-Ipser equation \cite{FackerellIpser}
\begin{align}
0&=\nabla^\alpha\nabla_\alpha\solu -\potlFI\solu 
\label{eq:FI}
\end{align}
where
\begin{align*}
\potlFI&=-\frac{2M}{\Kp^3} .
\end{align*}
This is equivalent to 
\begin{align}
0&=\pr\KDelta\pr\solu +\frac{\CurlyR}{\KDelta}\solu -\KSigma\potlFI\solu ,
\label{eq:RedFI}
\end{align}
where
\begin{align}
\CurlyR
&=\CurlyR(r;M,a;\pt,\pp,\OpQ)\nonumber\\
&=-(r^2+a^2)\pt^2-4aMr\pt\pp+(\KDelta-a^2)\pp^2+\KDelta\OpQ ,\\
\OpQ&=\frac{1}{\sin\theta}\ph\sin\theta\ph +\frac{\cos^2\theta}{\sin^2\theta}\pp^2+a^2\sin^2\theta\pt^2 .
\label{eq:DefR}
\end{align}

Because equation \eqref{eq:RedFI} is $\KSigma$ times equation \eqref{eq:FI}, and $\KSigma\sin\theta$ is the term $\sqrt{-\det\KgMetric}$ appearing in the volume form when expressed in Boyer-Lindquist coordinates, it is convenient to work with the simplified volume form
\begin{align*}
\diFourFI&= \di r\diomega\di t. 
\end{align*}

\subsection{Vector-field analysis and almost energy conservation}
The purpose of this subsection is to prove estimate \eqref{eq:CoreIII}. The Fackerell-Ipser equation has a complex potential, which invalidates the standard arguments from calculus of variations. Nonetheless, we can treat the Fackerell-Ipser equation as a wave equation with potential plus an $a$ small perturbation. 

\begin{definition}
The partial derivative matrix is 
\begin{align*}
\PDmat_{\alpha\beta}&= \Re(\partial_\alpha\bar{\solu}\partial_\beta\solu) .
\end{align*}
The pseudo-Lagrangian and pseudo-energy-momentum tensor for the Fackerell-Ipser equation are
\begin{align*}
\Lagrangian&=\frac12\left(\PDmat^\alpha{}_\alpha+(\Re\potlFI)\solub\solu\right), \\
\EMS_{\alpha\beta}&=\PDmat_{\alpha\beta}-\gMetric_{\alpha\beta}\Lagrangian .
\end{align*}

Given a regular vector field $\vecX$ and $t\in\Reals$, the pseudo-momentum and pseudo-energy of a regular solution of the Fackerell-Ipser equation $\solu$ are
\begin{align*}
\GenMomentum{\vecX}[\solu]_\alpha&=\EMS[\solu]_{\alpha\beta}\vecX^\beta , \\
\GenEnergy{\vecX}[\solu](t)&=\int_{\hst{t}}\GenMomentum{\vecX}[\solu]_\alpha \diNormal{\alpha}{\hst{t}} , \\
\GenBulk{\vecX}[\solu](t_1,t_2)&=-\int_{\BulkZone{t_1}{t_2}} \left(\EMS_{\alpha\beta}\nabla^{(\alpha}\vecX^{\beta)} +\vecX^\beta\nabla^\alpha\EMS_{\alpha\beta}\right) \diFourNatural .
\end{align*}
\end{definition}

\begin{theorem}[Energy generation theorem for the Fackerell-Ipser equation]
\label{thm:EnergyGenerationFI}
Let $X$ be a regular vector field. If $\solu$ is a regular solution of the Fackerell-Ipser equation, \eqref{eq:FI}, then $\forall t_1<t_2$
\begin{align*}
\nabla^\alpha\EMS_{\alpha\beta}
&= (\Im\potlFI)\Im(\solub\nabla_\beta\solu)-\Re(\nabla_\beta\potlFI)|\solu|^2/2 ,\\
\GenEnergy{\vecX}[\solu](t_2)-\GenEnergy{\vecX}[\solu](t_1)
&= \GenBulk{\vecX}[\solu](t_1,t_2).
\end{align*}
\end{theorem}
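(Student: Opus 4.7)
The plan is to verify both assertions by direct computation, deriving the divergence formula first and then obtaining the energy identity via the divergence theorem on $\BulkZone{t_1}{t_2}$.

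For the divergence formula, I would split $\nabla^\alpha\EMS_{\alpha\beta}=\nabla^\alpha\PDmat_{\alpha\beta}-\nabla_\beta\Lagrangian$ and compute each piece. Using $\nabla^\alpha\PDmat_{\alpha\beta}=\Re\bigl(\nabla^\alpha(\partial_\alpha\solub\,\partial_\beta\solu)\bigr)$ and the complex conjugate of the Fackerell-Ipser equation $\nabla^\alpha\partial_\alpha\solub=\overline{\potlFI}\,\solub$ yields
\[
\nabla^\alpha\PDmat_{\alpha\beta}=\Re\bigl(\overline{\potlFI}\,\solub\,\partial_\beta\solu\bigr)+\Re\bigl(\partial^\alpha\solub\,\nabla_\alpha\partial_\beta\solu\bigr).
\]
Since $\nabla$ is torsion-free on scalars, $\nabla_\alpha\partial_\beta\solu=\nabla_\beta\partial_\alpha\solu$, and since $\partial^\alpha\solub\,\partial_\alpha\solu=\PDmat^\alpha{}_\alpha$ is real, the second term equals $\tfrac12\nabla_\beta(\PDmat^\alpha{}_\alpha)$. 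On the other hand,
\[
\nabla_\beta\Lagrangian=\tfrac12\nabla_\beta(\PDmat^\alpha{}_\alpha)+\tfrac12\Re(\nabla_\beta\potlFI)|\solu|^2+(\Re\potlFI)\Re(\solub\,\partial_\beta\solu),
\]
so the kinetic pieces cancel and I am left with
\[
\nabla^\alpha\EMS_{\alpha\beta}=\Re\bigl(\overline{\potlFI}\,\solub\,\partial_\beta\solu\bigr)-(\Re\potlFI)\Re(\solub\,\partial_\beta\solu)-\tfrac12\Re(\nabla_\beta\potlFI)|\solu|^2.
\]
Writing $\potlFI=A+iB$ and $\solub\,\partial_\beta\solu=C+iD$ with $A=\Re\potlFI$, $B=\Im\potlFI$, $C=\Re(\solub\,\partial_\beta\solu)$, $D=\Im(\solub\,\partial_\beta\solu)$, one has $\Re(\overline{\potlFI}\,\solub\,\partial_\beta\solu)=AC+BD$ and $(\Re\potlFI)C=AC$, so the difference is $BD=(\Im\potlFI)\Im(\solub\,\nabla_\beta\solu)$, giving the first identity.

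For the energy identity, symmetry $\EMS_{\alpha\beta}=\EMS_{\beta\alpha}$ yields the pointwise Leibniz rule
\[
\nabla^\alpha(\EMS_{\alpha\beta}\vecX^\beta)=\EMS_{\alpha\beta}\nabla^{(\alpha}\vecX^{\beta)}+\vecX^\beta\nabla^\alpha\EMS_{\alpha\beta},
\]
so I would integrate over $\BulkZone{t_1}{t_2}$ and apply the divergence theorem. The boundary consists of $\hst{t_2}$ (outward future-pointing), $\hst{t_1}$ (outward past-pointing), and the idealised pieces $\bif$ and $\spacelikeinfty$, on which the flux vanishes by the regularity of $\solu$ and $\vecX$ exactly as in the parallel statement for the Maxwell tensor in the previous subsection. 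Tracking the Lorentzian sign conventions attached to the future-pointing choice of $\diNormal{\alpha}{\hst{t}}$ (so that the $\pt$-component of $\diNormal^\alpha$ is positive while its contraction with $W_\alpha=\EMS_{\alpha\beta}\vecX^\beta$ picks out $W_t$ rather than $W^t$) is what produces the minus sign in the definition of $\GenBulk{\vecX}[\solu]$ and delivers $\GenEnergy{\vecX}(t_2)-\GenEnergy{\vecX}(t_1)=\GenBulk{\vecX}(t_1,t_2)$.

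The one subtlety is the cancellation of $\tfrac12\nabla_\beta\PDmat^\alpha{}_\alpha$ between $\nabla^\alpha\PDmat_{\alpha\beta}$ and $\nabla_\beta\Lagrangian$, which is what isolates the single real-valued combination $(\Im\potlFI)\Im(\solub\,\nabla_\beta\solu)$ out of the otherwise awkward $\Re\bigl(\overline{\potlFI}\solub\,\partial_\beta\solu\bigr)-(\Re\potlFI)\Re(\solub\,\partial_\beta\solu)$. This reflects the fact that $\EMS$ is only a pseudo-energy-momentum tensor — the imaginary part of $\potlFI$ prevents $\Lagrangian$ from arising from a true real-valued variational principle — and it is precisely this residual $\Im\potlFI$ term which motivates the later need to control the $\BulkFIOne$-type term in the Morawetz estimate.
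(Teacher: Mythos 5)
Your proposal is correct and follows essentially the same route as the paper: expand $\nabla^\alpha\EMS_{\alpha\beta}$, substitute the Fackerell--Ipser equation to eliminate $\nabla^\alpha\nabla_\alpha\solub$, observe that the kinetic contribution $\tfrac12\nabla_\beta\PDmat^\gamma{}_\gamma$ cancels between $\nabla^\alpha\PDmat_{\alpha\beta}$ and $\nabla_\beta\Lagrangian$, and isolate the $(\Im\potlFI)\Im(\solub\nabla_\beta\solu)$ and $\Re(\nabla_\beta\potlFI)$ terms; the energy identity is then the divergence theorem on $\BulkZone{t_1}{t_2}$ with vanishing fluxes through $\bif$ and $\spacelikeinfty$ by the regularity assumptions, exactly as in the paper.

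One small caution about the parenthetical sign remark at the end: you attribute the minus sign in the definition of $\GenBulk{\vecX}[\solu]$ to tracking $\diNormal{\alpha}{\hst{t}}$, but the identical normal and energy conventions are used for the Maxwell tensor, where $\GenBulk{\vecX}[\MaxF]$ is defined \emph{without} a minus sign and yet the paper asserts the same identity $\GenEnergy{\vecX}[\MaxF](t_2)-\GenEnergy{\vecX}[\MaxF](t_1)=\GenBulk{\vecX}[\MaxF](t_1,t_2)$. So the sign cannot be explained purely by the Lorentzian contraction conventions; it is a notational choice (arguably a notational discrepancy) internal to the paper between the two bulk-term definitions, and the argument as you present it would need to say something about that if it is to be genuinely self-contained. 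This does not affect the correctness of the divergence formula or the substance of the divergence-theorem step.
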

\begin{proof}
Consider first,
\begin{align*}
\nabla^\alpha(\PDmat_{\alpha\beta}-\gMetric_{\alpha\beta}\Lagrangian)
&=\Re\left((\nabla^\alpha\nabla_\alpha\solub) (\nabla_\beta\solu) +(\nabla_\alpha\solub)(\nabla^\alpha\nabla_\beta\solu) \right)\\
&\qquad-\Re((\nabla_\beta\nabla^\gamma\solub)(\nabla_\gamma\solu))\\
&\qquad -\frac12\left(\Re(\potlFI\nabla_\beta\solub\solu)+\Re(\potlFI\solub\nabla_\beta\solu)\right)\\
&\qquad-\Re((\nabla_\beta\potlFI) |\solu|^2/2) \\
&=\frac12\Re\left(2\overline{\potlFI}\solub(\nabla_\beta\solu)
-\potlFI(\nabla_\beta\solub)\solu -\potlFI\solub\nabla_\beta\solu\right)\\
&\qquad -\Re(\nabla_\beta\potlFI)|\solu|^2/2 \\
&=(\Im\potlFI)\Im(\solub\nabla_\beta\solu)
 -\Re(\nabla_\beta\potlFI)|\solu|^2/2 .
\end{align*}
This gives the first of the desired results. 

The second part follows from the divergence theorem. From our definition of $\solu$ being regular, the energy is a convergent integral. In addition, from our definition of regular for a vector field, the fluxes through the bifurcation sphere and space-like infinity are zero. 
\end{proof}

Recall $\vecTperp$ and $\vecTBlend$ from subsection \ref{ss:TimelikeVectors} and the artificial energy from the introduction
\begin{align*}
\EnergyCrudeFI(t)
&=\frac12\inthst{t} \left(\frac{r^2+a^2}{\KDelta}|\vecTperp\solu|^2 +\frac{\KDelta}{r^2+a^2}|\pr\solu|^2 +\frac{|\pAng\solu|^2}{r^2} +\frac{|\solu|^2}{r^2}\right) \diThreeNaive .
\end{align*}

\begin{lemma}[Fackerell-Ipser energy equivalence]
\label{lemma:FIEnergyEquivalence}
\hypothesisA, 
\hypothesisFsolu, 
then $\forall t\in\Reals:$
\begin{align*}
\GenEnergy{\vecTBlend}[\solu](t)
\lesssim \EnergyCrudeFI(t)
\lesssim \GenEnergy{\vecTBlend}[\solu](t) +\GenEnergy{\vecTBlend}[\MaxF](t) .
\end{align*}
\end{lemma}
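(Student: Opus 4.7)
The plan is to compare the pseudo-energy density $\EMS[\solu](\vecTperp,\vecTBlend)$ pointwise with the integrand of $\EnergyCrudeFI$. First, since $\vecTperp$ is orthogonal to $\hst{t}$ with $\gMetric(\vecTperp,\vecTperp)=-\KSigma\KDelta/\KPi$ and is $g$-orthogonal to $\pp$, so that the induced inverse metric on $\hst{t}$ is $(\KDelta/\KSigma)\pr\otimes\pr+(1/\KSigma)\ph\otimes\ph+(\KSigma/(\KPi\sin^2\theta))\pp\otimes\pp$, the decomposition of $\gMetric^{-1}$ in the frame $\{\vecTperp,\pr,\ph,\pp\}$ yields the pointwise identity
\begin{align*}
\EMS(\vecTperp,\vecTperp)
&=\tfrac12|\vecTperp\solu|^2+\tfrac12\frac{\KSigma\KDelta}{\KPi}\left(\frac{\KDelta}{\KSigma}|\pr\solu|^2+\frac{|\ph\solu|^2}{\KSigma}+\frac{\KSigma|\pp\solu|^2}{\KPi\sin^2\theta}\right)\\
&\quad+\tfrac12\frac{\KSigma\KDelta}{\KPi}(\Re\potlFI)|\solu|^2.
\end{align*}
Multiplying by the normal measure $(\KPi/\KDelta)\sin\theta\,\di r\,\di\theta\,\di\phi$ and comparing with the integrand of $\EnergyCrudeFI$ (itself weighted by $\diThreeNaive=r^2\sin\theta\,\di r\,\di\theta\,\di\phi$), each of the four kinetic coefficients is $a$-almost equal, up to a fixed positive constant, to the corresponding coefficient of $\EnergyCrudeFI$: $\KPi/\KDelta$ matches $r^2(r^2+a^2)/\KDelta$, $\KDelta$ matches $r^2\KDelta/(r^2+a^2)$, and the angular terms match after noting $\KPi/\KSigma^2\to 1$ as $a\to 0$. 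The remaining potential density $\tfrac12\KSigma(\Re\potlFI)|\solu|^2\sin\theta$ satisfies $|\KSigma\Re\potlFI|\lesssim M/r\leq 1$, since $\potlFI=-2M\Kp^{-3}$ and $\KSigma=|\Kp|^2$.

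Next, to pass from $\vecTperp$ to $\vecTBlend=\vecTperp+(\omegaBlend-\omegaPerp)\pp$, I use the bound $|\omegaBlend-\omegaPerp|\lesssim |a|\KDelta r^{-4}$ from subsection \ref{ss:TimelikeVectors} together with Cauchy--Schwarz: the extra cross-term in $\PDmat(\vecTperp,\vecTBlend)$ and the correction to $\gMetric(\vecTperp,\vecTBlend)\Lagrangian$ contribute at most $C(|a|/M)$ times the sum of the four kinetic integrands of $\EnergyCrudeFI$, exactly as in the proof of lemma \ref{lemma:MaxwellEnergyEquivalence}.

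For the forward inequality $\GenEnergy{\vecTBlend}[\solu]\lesssim\EnergyCrudeFI$, the kinetic part of $\EMS(\vecTperp,\vecTBlend)$ is pointwise bounded by a multiple of the kinetic density of $\EnergyCrudeFI$, while the signed potential contribution is bounded in absolute value by $\int|\KSigma\Re\potlFI|\,|\solu|^2\sin\theta\,\di r\,\di\theta\,\di\phi\lesssim\int|\solu|^2/r^2\,\diThreeNaive$, which already appears as a term of $\EnergyCrudeFI$. For the reverse inequality, the same termwise matching yields $\EnergyCrudeFI\lesssim\GenEnergy{\vecTBlend}[\solu]+C\int|\solu|^2/r^2\,\diThreeNaive$, so it only remains to bound the zeroth-order integral. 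Here the hypothesis $\solu=\solu[\MaxF]=\Kp\phiz$ is essential: $|\solu|^2=\KSigma|\phiz|^2\lesssim r^2|\phiz|^2\lesssim r^2(|\Electric|^2+|\Magnetic|^2)$, so $\int|\solu|^2/r^2\,\diThreeNaive\lesssim\EnergyCrudeF(t)\sim\GenEnergy{\vecTBlend}[\MaxF](t)$ by lemma \ref{lemma:MaxwellEnergyEquivalence}. The main obstacle is precisely this sign issue: the real part of $\potlFI$ is negative, so the potential term in $\EMS(\vecTperp,\vecTperp)$ has the wrong sign and $\GenEnergy{\vecTBlend}[\solu]$ fails to be positive definite on its own; without access to the Maxwell energy there is no way to recover the $|\solu|^2/r^2$ piece of $\EnergyCrudeFI$, which is exactly why $\GenEnergy{\vecTBlend}[\MaxF]$ must appear on the right-hand side.
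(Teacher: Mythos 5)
Your proposal is correct and follows essentially the same route as the paper's proof: split $\GenEnergy{\vecTBlend}[\solu]$ into a main kinetic term matching the first three terms of $\EnergyCrudeFI$, an $|a|$-small cross-term coming from $\omegaBlend-\omegaPerp$, and a sign-indefinite potential term whose magnitude is controlled by $|\phiz|^2$. Your use of $|\solu|^2=\KSigma|\phiz|^2$ and the Maxwell energy equivalence to recover the $|\solu|^2/r^2$ piece, forced by the negativity of $\Re\potlFI$, is exactly the mechanism the paper uses to explain why $\GenEnergy{\vecTBlend}[\MaxF]$ must appear on the right.
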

\begin{proof}
Recall $\diNormal{\alpha}{\hst{t}}=\vecTperp^\alpha (\KPi/\KDelta) \diThreeNaiveFI$. Thus, since $\vecTBlend=\vecTperp+(\omegaBlend-\omegaPerp)\pp$ and $g(\vecTperp,\pp)=0$, one finds
\begin{align*}
\GenEnergy{\vecTBlend}
&=\inthst{t} \left(\PDmat_{\alpha\beta}-\frac12\gMetric_{\alpha\beta}\PDmat^\gamma{}_\gamma-\frac12\gMetric_{\alpha\beta}\Re\potlFI |\solu|^2\right)\vecTBlend^\beta\vecTperp^\alpha\frac{\KPi}{\KDelta}\diThreeNaiveFI \\
&=\inthst{t}\left(\PDmat_{\alpha\beta}-\frac12\gMetric_{\alpha\beta}\PDmat^\gamma{}_\gamma\right)\vecTperp^\alpha\vecTperp^\beta \frac{\KPi}{\KDelta}\diThreeNaiveFI \\
&\qquad +\inthst{t}\PDmat_{\alpha\beta}(\omegaBlend-\omegaPerp)\pp^\beta\vecTPerp^\alpha\frac{\KPi}{\KDelta}\diThreeNaiveFI \\
&\qquad+\inthst{t} -\frac12\gMetric_{\alpha\beta}\vecTperp^\beta\vecTperp^\alpha\Re\potlFI|\solu|^2 \frac{\KPi}{\KDelta} \diThreeNaiveFI .
\end{align*}
Since $\gMetric(\vecTperp,\vecTperp)=-\KDelta\KSigma/\KPi$, $\KSigma\sim r^2+a^2$, $\gMetric^{rr}\sim\KDelta(r^2+a^2)^{-1}$, $\gMetric^{\theta\theta}\sim r^{-2}$, and $\gMetric^{\phi\phi}\sim (r^2\sin^2\theta)^{-1}$, one finds 
\begin{align*}
\inthst{t}&\left(\PDmat_{\alpha\beta}-\frac12\gMetric_{\alpha\beta}\PDmat^\gamma{}_\gamma\right)\vecTperp^\alpha\vecTperp^\beta \frac{\KPi}{\KDelta}\diThreeNaiveFI\\
&\sim\frac12\inthst{t} \left( \frac{r^2+a^2}{\KDelta}|\vecTperp\solu|^2+\frac{\KDelta}{r^2+a^2}|\pr\solu|^2 +\frac{|\pAng\solu|^2}{r^2}\right) \diThreeNaive .
\end{align*}

Since $\omegaBlend-\omegaPerp$ vanishes linearly in $\KDelta$, cubicly in $r^{-1}$, and is $a$ small,
\begin{align*}
\left|\inthst{t} \PDmat_{\alpha\beta} (\omegaBlend-\omegaPerp)\pp^\alpha\vecTperp^\beta \frac{\KPi}{\KDelta}\diThreeNaiveFI\right| 
&\lesssim |a| \inthst{t}\left(|\vecTperp\solu|^2 +\frac{1}{r^2}|\pp\solu|^2\right)\diThreeNaive .
\end{align*}

The potential
\begin{align*}
\Re\potlFI&=-\Re\frac{2M}{\Kp^3}
=2M\frac{\Re\Kp^3}{|\Kp|^6}
=-2M\frac{r^3+3ra^2\cos^2\theta}{|\Kp|^6} 
\end{align*}
is negative, decays like $Mr^{-3}$, and is bounded by $4|p|^{-2}$. Thus,
\begin{align*}
\frac{\Re\potlFI|\solu|^2}{2} \leq 0 \leq |\phiz|^2 .
\end{align*}
On the other hand, $|\phiz|^2\sim\frac{|\solu|^2}{r^2}$, so
\begin{align*}
\frac{|\solu|^2}{r^2}
\sim |\phiz|^2
\leq \Re\potlFI|\solu|^2 +4|\phiz|^2 .
\end{align*}
Thus, the desired result holds. 
\end{proof}

\begin{lemma}[Fackerell-Ipser energy growth]
\label{lemma:FIEnergy}
\hypothesisC,
\hypothesisFsolu,
then for $\FinalTime>0$, 
\begin{align*}
\GenEnergy{\vecTBlend}[\solu](\FinalTime)
-\GenEnergy{\vecTBlend}[\solu](0)
\lesssim \frac{|a|}{M}\left(\BulkFILowerOrder +\BulkFIOne +\BulkFITwo\right). 
\end{align*}
\end{lemma}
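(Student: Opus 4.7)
The plan is to invoke the energy generation identity of Theorem \ref{thm:EnergyGenerationFI}, writing $\GenEnergy{\vecTBlend}[\solu](\FinalTime) - \GenEnergy{\vecTBlend}[\solu](0) = \GenBulk{\vecTBlend}[\solu](0,\FinalTime)$, and to bound the bulk integrand term by term. The integrand splits as a deformation-tensor contraction $-\EMS_{\alpha\beta}\nabla^{(\alpha}\vecTBlend^{\beta)}$ plus a non-conservation term $-\vecTBlend^\beta\nabla^\alpha\EMS_{\alpha\beta}$. A crucial preliminary observation is that, because $\potlFI$ depends only on $r$ and $\theta$ while $\vecTBlend=\pt+\omegaBlend\pp$ is built from Killing directions of $\potlFI$, one has $\vecTBlend\potlFI=0$. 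Hence the $\Re(\nabla_\beta\potlFI)|\solu|^2$ contribution from $\nabla^\alpha\EMS_{\alpha\beta}$ drops out completely, leaving only $(\Im\potlFI)\Im(\solub\vecTBlend\solu)$ in the non-conservation piece.

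For the deformation-tensor piece, since $\pt$ and $\pp$ are Killing, the only contribution to $\nabla^{(\alpha}\vecTBlend^{\beta)}$ comes from $\pr\fnBlend$ and is therefore supported in $[10M,11M]$ and proportional to $\tfrac{a}{\rp^2+a^2}(\pr\fnBlend)\,\pr^{(\alpha}\pp^{\beta)}$. Its contraction with $\EMS$ is pointwise bounded by $(|a|/M)\cdot M^{-2}(|\pr\solu|^2+|\pp\solu|^2)$ times bounded factors; after integrating against $\diFourNatural$, one verifies that $[10M,11M]\subset\{\chirfar=1\}$ (provided $\rfar\le M$), so the resulting integral is absorbed by $(|a|/M)\BulkFITwo$ after a Cauchy-Schwarz splitting that uses $|\pp\solu|^2\le \sin^2\theta\,|\pAng\solu|^2$.

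For the non-conservation piece, decompose $\Im(\solub\vecTBlend\solu)=\Im(\solub\pt\solu)+\omegaBlend\Im(\solub\pp\solu)$. The $\omegaBlend\pp$ summand carries an extra factor $\omegaBlend=O(|a|/M)$ on top of $|\Im\potlFI|\KSigma\lesssim |a|M/r^2$, so a weighted Cauchy-Schwarz directly gives a bound by $(|a|/M)^2(\BulkFILowerOrder+\BulkFITwo)\le (|a|/M)(\BulkFILowerOrder+\BulkFITwo)$, using that the $\omegaBlend$-support is bounded in $r$ and $\chirfar=1$ on its intersection with $\{r\ge 12M\}$, with the problematic interval again avoided by smallness of $\omegaBlend$. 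For the main summand $\int(\Im\potlFI)\Im(\solub\pt\solu)\KSigma\,\diFourFI$, I split the $r$-domain by $\chirfar$. Where $\chirfar=1$, I bound $|a|Mr^{-2}|\solu||\pt\solu|\le \tfrac{|a|}{2M}\bigl(M^2r^{-1}|\pt\solu|^2+M^2r^{-3}|\solu|^2\bigr)$; the first summand is controlled by $\BulkFITwo$, and since $M/r\le 1$ the second is absorbed in $\BulkFILowerOrder$.

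The main obstacle is the remaining region $\{1-\chirfar\neq 0\}$, where $\BulkFITwo$ degenerates and $|\pt\solu|^2$ cannot be Cauchy-Schwarz'd against a controlled weight. Here the argument must use $\BulkFIOne$ and exploit the structural identity $\int_{\Stwo}(\Im\potlFI)\KSigma\,\diomega=0$, which follows because $(\Im\potlFI)\KSigma$ is odd in $\cos\theta$ (from $\Kp=r-ia\cos\theta$). Setting $W(r,\theta):=(\Im\potlFI)\KSigma$ and writing the integral via Fubini as
\begin{equation*}
\int_{\rp}^{\infty}(1-\chirfar)\left[\int_{0}^{\FinalTime}\int_{\Stwo}W(r,\theta)\,\Im(\solub\pt\solu)\,\diomega\,\di t\right]\di r,
\end{equation*}
one subtracts the spherical mean of $\Im(\solub\pt\solu)$ (which contributes zero because $\int_{\Stwo}W\,\diomega=0$) to replace the angular integrand by its mean-zero part; that part is controlled via the Poincaré inequality on $\Stwo$ against $|\pAng\solu|$-type quantities available from $\BulkFITwo$ away from $r=3M$ and against $\BulkFIOne$ in the orbiting region, where the bounded support in $r$ and the $O(|a|/M)$ pointwise size of $W$ combine with the signed angular-temporal integral to produce the bound by $(|a|/M)\BulkFIOne$. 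Matching the angular-temporal structure carefully, with Poincaré absorbing the zero-mean complement and $\BulkFIOne$ capturing exactly the remaining $r$-integrated signed current, is the delicate technical step I expect to require the most care.
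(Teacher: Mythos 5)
Your overall strategy matches the paper's: invoke the energy-generation identity, kill the $\Re\nabla_\beta\potlFI$ contribution using $\vecTBlend\potlFI=0$, localise the deformation-tensor term in $[10M,11M]$ (where $\chirfar\equiv1$) and absorb it into $(|a|/M)\BulkFITwo$, Cauchy--Schwarz the $\Im\potlFI$ piece away from $r=3M$, and handle the region where $\chirfar$ degenerates separately. Those portions are sound. However, both of your special arguments for the region $|r-3M|<2\rfar$ contain errors.

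For the $\omegaBlend\Im(\solub\pp\solu)$ piece you claim the problematic interval is avoided because $\omegaBlend$ is supported in $\{r\geq 12M\}$ and is small. Neither is true: $\omegaBlend=\fnBlend\,a/(\rp^2+a^2)$ is supported where $\fnBlend\neq0$, i.e.\ $r<11M$ (disjoint from $\{r\geq 12M\}$), and this set contains the entire interval $|r-3M|<2\rfar$, on which $\fnBlend\equiv1$ and $\omegaBlend=a/(\rp^2+a^2)$ is a fixed nonzero constant. The extra factor of $|a|/M$ this supplies does not remove the need to control $|\pp\solu|^2$ there, and no available bulk term does so: $\BulkFITwo$ carries $\chirfar$, which vanishes, $\BulkFILowerOrder$ has no derivatives, and $\BulkFIOne$ involves $\Im(\solub\pt\solu)$, not $\pp$. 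For the $\pt$ piece, your observation that $W:=(\Im\potlFI)\KSigma$ is odd in $\cos\theta$ and hence $\int_{\Stwo}W\,\diomega=0$ is correct — and actually sharper than what the paper records — but the conclusion you draw from it is inverted. Orthogonality to constants means $\int_{\Stwo}W\,\Im(\solub\pt\solu)\,\diomega$ sees only the \emph{mean-zero} part of $\Im(\solub\pt\solu)$, whereas $\BulkFIOne$ is precisely the \emph{spherical-mean} part (the absolute value sits outside the angular and temporal integrals). Thus after your reduction $\BulkFIOne$ would not appear at all, and what survives cannot be bounded by Poincar\'e either: $\pAng\Im(\solub\pt\solu)$ contains the second derivative $\pAng\pt\solu$, which is not controlled by any of $\BulkFILowerOrder,\BulkFIOne,\BulkFITwo$ near $r=3M$. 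The sentence ``Poincar\'e absorbing the zero-mean complement and $\BulkFIOne$ capturing exactly the remaining $r$-integrated signed current'' is internally inconsistent, since there is no remaining mean current once $\int W\,\diomega=0$ has been used. The paper's own treatment is terse here (it simply invokes ``Fubini's theorem'' and produces a $\BulkFIOne$ term, implicitly treating $W$ as though $\sup_\theta|W|$ could be pulled outside the angular absolute value); your mean-zero observation is exactly the fact that undercuts that shortcut, but the repair you propose does not close the gap.
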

\begin{proof}
From the Fackerell-Ipser energy generation theorem \ref{thm:EnergyGenerationFI},
\begin{align*}
\GenEnergy{\vecTBlend}[\solu](t_2)
-\GenEnergy{\vecTBlend}[\solu](t_1)
=&-\int_{\BulkZone{t_1}{t_2}} \EMS_{\alpha\beta}\nabla^{(\alpha}\vecTBlend^{\beta)} \diFourNatural \\
&\quad-\int_{\BulkZone{t_1}{t_2}} \vecTBlend^\beta(\Im\potlFI)\Im(\solub\partial_\beta\solu) \diFourNatural\\
&\quad+\int_{\BulkZone{t_1}{t_2}} \frac12\vecTBlend^\beta(\Re\nabla_\beta\potlFI)|\solu|^2\diFourNatural.
\end{align*}
Since $\nabla^{(\alpha}\vecX^{\beta)}=-\frac12\Lie_{\vecX}\gMetric^{\alpha\beta}$ and since $\pt$ and $\pp$ are Killing vectors, the Lie derivative
\begin{align*}
\Lie_{\vecTBlend}\gMetric^{\alpha\beta}
&=\frac{a}{\rp^2+a^2} \Lie_{\fnBlend\pp}\gMetric^{\alpha\beta}
\end{align*}
is $a$ small, smooth, and supported in the compact set $\supp\nabla\fnBlend\subset{r\in[10M,11M]}$, which is far from the orbiting null geodesics. Since
\begin{align*}
\left|\EMS_{\alpha\beta}\nabla^{(\alpha}\vecTBlend^{\beta)}\right|
\lesssim \frac{|a|}{M^2} |\pr\fnBlend|\left(|\vecTBlend\solu|^2+|\pr\solu|^2+M^{-2}|\pAng\solu|^2+M^{-2}|\solu|^2\right) ,
\end{align*}
one finds
\begin{align*}
\left| \int_{\BulkZone{t_1}{t_2}}\EMS_{\alpha\beta}\nabla^{(\alpha}\vecTBlend^{\beta)}\diFourNatural \right|
&\lesssim \frac{|a|}{M} \left(\BulkFILowerOrder+\BulkFITwo\right) .
\end{align*}

Since $|\Im\potlFI|\lesssim r^{-4}$, by dividing into the regions where $|r-3M|$ is smaller or larger than $\rfar$, using the Cauchy-Schwarz inequality when it is larger and Fubini's theorem one it is smaller, one finds
\begin{align*}
\left|\int_{\BulkZone{t_1}{t_2}} \vecTBlend^\beta(\Im\potlFI)\Im(\solub\partial_\beta\solu) \diFourNatural \right|
&\leq\left|\int_{\BulkZone{t_1}{t_2}\cap\{|r-3M|\geq\rfar\}} \vecTBlend^\beta(\Im\potlFI)\Im(\solub\partial_\beta\solu) \diFourNatural \right|\\
&\quad+\left|\int_{\BulkZone{t_1}{t_2}\cap\{|r-3M|<\rfar\}} \vecTBlend^\beta(\Im\potlFI)\Im(\solub\partial_\beta\solu) \diFourNatural \right|\\
&\lesssim  \frac{|a|}{M}(\BulkFILowerOrder+\BulkFITwo) + \frac{|a|}{M}\BulkFIOne .
\end{align*}

Since $\potlFI$ is independent of $t$ and $\phi$, 
\begin{align*}
\int_{\BulkZone{t_1}{t_2}} \vecTBlend^\beta(\Re\nabla_\beta\potlFI)|\solu|^2\diFourNatural
=0 .
\end{align*}
\end{proof}

\begin{prop}[Core estimate \eqref{eq:CoreIII}]
\hypothesisC,
\hypothesisFsolu,
then $\forall\FinalTime>0$,
\begin{align*}
\EnergyCrudeFI(\FinalTime)
\lesssim \EnergyCrudeFI(0)+\EnergyCrudeF(0)
+\frac{|a|}{M}\left(\BulkMorawetzF+\BulkFILowerOrder+\BulkFIOne+\BulkFITwo\right) .
\end{align*}
\end{prop}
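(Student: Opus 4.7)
The proof is essentially a bookkeeping exercise that chains together the two preceding lemmas with the almost energy conservation result for the Maxwell field. The plan is to pass from $\EnergyCrudeFI(\FinalTime)$ to the artificial energy $\GenEnergy{\vecTBlend}[\solu]$ via the energy equivalence, propagate that quantity from time $\FinalTime$ back to time $0$ using the Fackerell-Ipser energy growth bound, and absorb the resulting $\GenEnergy{\vecTBlend}[\MaxF](\FinalTime)$ remainder using core estimate \eqref{eq:CoreI} for the Maxwell field.

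More precisely, by the upper bound in lemma \ref{lemma:FIEnergyEquivalence},
\begin{align*}
\EnergyCrudeFI(\FinalTime) \lesssim \GenEnergy{\vecTBlend}[\solu](\FinalTime) + \GenEnergy{\vecTBlend}[\MaxF](\FinalTime).
\end{align*}
The first term on the right is controlled by lemma \ref{lemma:FIEnergy}, which gives
\begin{align*}
\GenEnergy{\vecTBlend}[\solu](\FinalTime) \lesssim \GenEnergy{\vecTBlend}[\solu](0) + \frac{|a|}{M}\left(\BulkFILowerOrder + \BulkFIOne + \BulkFITwo\right),
\end{align*}
and applying the lower bound half of lemma \ref{lemma:FIEnergyEquivalence} at $t=0$ yields $\GenEnergy{\vecTBlend}[\solu](0) \lesssim \EnergyCrudeFI(0)$. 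For the second term, lemma \ref{lemma:MaxwellEnergyEquivalence} gives $\GenEnergy{\vecTBlend}[\MaxF](\FinalTime) \sim \EnergyCrudeF(\FinalTime)$, and then core estimate \eqref{eq:CoreI} (proposition \ref{prop:MaxwellACE}) produces
\begin{align*}
\EnergyCrudeF(\FinalTime) \lesssim \EnergyCrudeF(0) + \frac{|a|}{M}\left(\BulkMorawetzF + \BulkFILowerOrder\right).
\end{align*}
Combining these three chains of inequalities gives the claimed bound.

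There is no real obstacle beyond verifying that the hypotheses (any projection away from the orbiting null geodesics, $|a|/M$ sufficiently small, $\MaxF$ regular charge-free, $\solu = \solu[\MaxF]$) line up across the invoked statements. All the genuine analytic work — treating the complex potential $\potlFI$ by splitting it into $\Im\potlFI$ and $\Re\potlFI$ pieces, handling the former near versus far from $r=3M$ via Fubini and Cauchy-Schwarz (which is where $\BulkFIOne$ enters), controlling the Lie-derivative deformation of $\vecTBlend$ in $[10M,11M]$, and invoking that the $\Re\nabla_\beta\potlFI$ piece integrates to zero because $\vecTBlend$ is a linear combination of the Killing vectors $\pt$ and $\pp$ — has already been absorbed into lemmas \ref{lemma:FIEnergyEquivalence} and \ref{lemma:FIEnergy}. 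Therefore the proof of the proposition itself reduces to the two-line composition above.
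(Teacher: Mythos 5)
Your proposal is correct and takes essentially the same route as the paper: pass from $\EnergyCrudeFI(\FinalTime)$ to the $\vecTBlend$-energies via lemma \ref{lemma:FIEnergyEquivalence}, propagate back to $t=0$ using lemma \ref{lemma:FIEnergy} and proposition \ref{prop:MaxwellACE} (picking up the $\frac{|a|}{M}$-weighted bulk terms), and close the loop with the equivalence lemmas \ref{lemma:MaxwellEnergyEquivalence} and \ref{lemma:FIEnergyEquivalence} at the initial time. The paper's proof is exactly this chain, stated in a single paragraph.
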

\begin{proof}
From the Fackerell-Ipser energy equivalence lemma \ref{thm:EnergyGenerationFI}, the crude energy $\EnergyCrudeFI(\FinalTime)$ is estimated by $\GenEnergy{\vecTBlend}[\solu](\FinalTime)+\GenEnergy{\vecTBlend}[\MaxF](\FinalTime)$. From the Maxwell and Fackerell-Ipser energy growth proposition \ref{prop:MaxwellACE} and lemma \ref{lemma:FIEnergy}, these energies are bounded by their initial values, $\GenEnergy{\vecTBlend}[\MaxF](0)+\GenEnergy{\vecTBlend}[\solu](0)$, plus $|a|/M$ times the bulk terms, $\BulkMorawetzF+\BulkFILowerOrder+\BulkFIOne+\BulkFITwo$. Finally, from Maxwell and Fackerell-Ipser energy equivalence lemmas \ref{lemma:MaxwellEnergyEquivalence} and \ref{lemma:FIEnergyEquivalence}, the initial $\vecTBlend$ energies can be estimated by the crude ones, $\EnergyCrudeFI(0)+\EnergyCrudeF(0)$. 
\end{proof}

\begin{corollary}
\label{cor:TrivialEnergyBound}
\hypothesisC,
\hypothesisFsolu, 
then $\forall t_1<t_2$, 
\begin{align*}
\EnergyCrudeF(t_2)+\EnergyCrudeFI(t_2)
\leq C_1 e^{C_2\frac{|a|}{M^2}(t_2-t_1)}\left(\EnergyCrudeF(t_1)+\EnergyCrudeFI(t_1)\right).
\end{align*}
\end{corollary}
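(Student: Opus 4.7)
The strategy is Grönwall's inequality applied to the combined energy $\mathcal{E}(t) := \EnergyCrudeF(t) + \EnergyCrudeFI(t)$, using the fact that the already-proved core estimates \eqref{eq:CoreI} and \eqref{eq:CoreIII} bound $\mathcal{E}(t_2)$ in terms of $\mathcal{E}(t_1)$ plus $|a|/M$ times the bulk quantities. Although we do not yet have a genuine Morawetz estimate (that requires closing the bootstrap with \eqref{eq:CoreII}, \eqref{eq:CoreIV}, \eqref{eq:CoreV}), we do not need one: because of the rotation-factor $|a|/M$, it suffices to control the bulk quantities by a \emph{trivial} time-integral of the energies themselves, at the cost of an extra power of $M^{-1}$, and then invoke Grönwall.

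The first step is to extend the proofs of propositions \ref{prop:MaxwellACE} and the preceding proposition (core estimates \eqref{eq:CoreI} and \eqref{eq:CoreIII}) from the interval $[0,T]$ to an arbitrary $[t_1,s]$ with $t_1 < s \leq t_2$; this is immediate since both proofs rest on the energy generation identities of theorem \ref{thm:EnergyGenerationFI} and its Maxwell analogue, which hold on any $\BulkZone{t_1}{s}$. This yields
\begin{align*}
\mathcal{E}(s) \leq C\,\mathcal{E}(t_1) + \tfrac{C|a|}{M}\bigl(\BulkMorawetzF + \BulkFILowerOrder + \BulkFIOne + \BulkFITwo\bigr)\big|_{[t_1,s]}.
\end{align*}

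The second step is the trivial pointwise comparison of bulk integrands with energy densities. For $\BulkMorawetzF$, $\BulkFILowerOrder$, and $\BulkFITwo$ one checks by inspection that the coefficients $M\KDelta/(r^2+a^2)^2$, $M/r^2$, $M\KDelta^2/((r^2+a^2)r^2)$, $M^2\chirfar/r$, and $\chirfar/r$ are each bounded pointwise by $C/M$ times the corresponding coefficient that appears in the $\EnergyCrudeF$ or $\EnergyCrudeFI$ density (after accounting for the different volume forms $\diFourNatural$, $\diFourFI$ versus $\diThreeNaive\,dt$). For $\BulkFIOne$, applying the pointwise inequality $|\solub\pt\solu|\leq \tfrac12 (|\solu|^2 + |\pt\solu|^2)$ inside the $t,\omega$ integral, then Fubini, and using that $1-\chirfar$ is supported in the compact $r$-region $|r-3M|\leq 2\rfar$ on which $r\sim M$ and $(r^2+a^2)/\KDelta\sim 1$, one obtains in the same manner a bound $\BulkFIOne \lesssim M^{-1}\int_{t_1}^{s}\EnergyCrudeFI(\tau)\,d\tau$. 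Summing,
\begin{align*}
\BulkMorawetzF + \BulkFILowerOrder + \BulkFIOne + \BulkFITwo \;\lesssim\; \frac{1}{M}\int_{t_1}^{s}\mathcal{E}(\tau)\,d\tau.
\end{align*}

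Inserting this into the previous display gives $\mathcal{E}(s) \leq C_1\mathcal{E}(t_1) + \frac{C_2 |a|}{M^2}\int_{t_1}^{s}\mathcal{E}(\tau)\,d\tau$, valid for all $s\in[t_1,t_2]$. Grönwall's inequality then yields $\mathcal{E}(t_2)\leq C_1 e^{C_2|a|(t_2-t_1)/M^2}\mathcal{E}(t_1)$, which is the claimed bound. The only mildly delicate step is the trivial bound on $\BulkFIOne$, since this term has one fewer derivative than the others in an $L^2$ sense and is defined via an absolute value on a $t,\omega$-integral rather than a positive density; the Cauchy--Schwarz/AM--GM step above handles this. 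Everything else is bookkeeping of constants that may depend on $M$ but not on $a$.
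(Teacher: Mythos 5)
Your proposal is correct and follows essentially the same route as the paper: extend core estimates (I) and (III) to arbitrary time intervals, observe the trivial pointwise bound on the bulk integrands by $M^{-1}$ times the energy density (with the $\BulkFIOne$ term handled by moving the absolute value inside and applying AM--GM), and close with Gr\"onwall. Your write-up merely expands the one-line "trivial bound" step of the paper into an explicit coefficient-by-coefficient check.
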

\begin{proof}
Core estimates \eqref{eq:CoreI} and \eqref{eq:CoreIII} remain valid with the initial and final times changed from $0$ and $\FinalTime$ to $t_1$ and $t_2$ respectively. $\BulkMorawetzF[t_1,t_2]$ etc.\ will denote the bulk terms with these limits for the integration in $t$. There is the trivial bound
\begin{align*}
\BulkMorawetzF[t_1,t_2]+\BulkFILowerOrder[t_1,t_2]+\BulkFIOne[t_1,t_2]+\BulkFITwo[t_1,t_2]
\lesssim \frac{1}{M} \int_{t_1}^{t_2} \EnergyCrudeF(s)+\EnergyCrudeFI(s) \di s.
\end{align*}
This estimate, the analogues of core estimates \eqref{eq:CoreI} and \eqref{eq:CoreIII}, and Gronwall's inequality combine to prove the main result of this corollary. 
\end{proof}

\section{Fourier-spectral analysis of the Fackerell-Ipser equation}
\label{s:FourierSpectral}
\subsection{Fourier-spectral analysis and the Morawetz estimate: an overview}
\label{ss:FIMorawetzOutline}
The goal of the remainder of this section is to prove core estimates \eqref{eq:CoreIV}-\eqref{eq:CoreV}, which we refer to as Morawetz estimates. The main idea is to construct a vector field that points away from the orbiting null geodesics. 

The location of the orbiting null geodesics are determined by the double roots of the potential appearing in an ODE for the radial components. The potential is $\CurlyR(r;M,a;\geodesice,\geodesiclz,\geodesicQ)$, where $\geodesice$, $\geodesiclz$, and $\geodesicQ$ are constant of the geodesic motion, but $\CurlyR$ remains defined by the function given in equation \eqref{eq:DefR}. With respect to a convenient, non-affine parameterisation, $\lambda$, of the null geodesics, the ODE is $(\di r/\di\lambda)^2=\CurlyR$. Standard ODE analysis, as in a Newtonian potential problem, dictates that there are orbiting null geodesics when $\CurlyR=0$ and $\pr\CurlyR=0$. For any nonnegative, differentiable weight $\fnMorawetzSA$, these conditions are equivalent to
\begin{align*}
\frac{\fnMorawetzSA}{\KDelta}\CurlyR&=0, &
\pr \left(\frac{\fnMorawetzSA}{\KDelta}\CurlyR\right)&=0 .
\end{align*}
With our sign conventions, the instability of the orbiting null geodesics is $\pr^2\CurlyR<0$. This instability condition can also be rewritten with rescaling functions when convenient. To discuss the location of the orbiting null geodesics and their stability, we find it useful to introduce positive weights $\fnMorawetzSA$ and $\fnMorawetzSB$ and to use the quantities we introduced in \cite{AnderssonBlue}
\begin{align*}
\DCurlyRT&=\pr \left(\frac{\fnMorawetzSA}{\KDelta}\CurlyR\right), &
\DDCurlyRTT&= \pr\left(\frac{\fnMorawetzSA^{1/2}}{\KDelta^{1/2}}\fnMorawetzSB\DCurlyRT\right) .
\end{align*}

In this paper, we use a Fourier-spectral multiplier to prove the Morawetz estimate. Given a self-adjoint operator or collection of commuting self-adjoint operators, the spectral theorem defines a spectral transform on $L^2$. The transformed function is a function on the spectral space, which is the spectrum of the operators. A Fourier-spectral variable is a variable that takes values in this spectral space. In this paper, we use the phrase Fourier-spectral multiplier to refer to a function of the Fourier-spectral variables. For $i\pt$, the spectral transform is just the standard Fourier transform. While an analogue of the basic Morawetz, core estimate \eqref{eq:CoreIV}, can be proved using only differential operators \cite{AnderssonBlue}, the refined Morawetz estimate, core estimate \eqref{eq:CoreV} appears to require Fourier-spectral refinements. Thus, the construction in this paper is closer in spirit to \cite{DafermosRodnianski:KerrBoundedness,TataruTohaneanu}, although the details follow those in \cite{AnderssonBlue}. 

In subsection \ref{ss:FourierSpectralTransform}, we justify taking a
spectral transform. We denote the spectral variables by
$\spectralcollection=(\spectralt,\spectralp,\spectralQ)$. The
functions $\CurlyR$, $\DCurlyRT$, and $\DDCurlyRTT$ can be interpreted
as functions of $r$ and the spectral variables, by replacing the
conserved quantities for null geodesics by associated differential
operators acting on functions.

The central idea is to construct a spectral analogue of a vector field and an auxiliary function
\begin{subequations}
\label{eq:DefGenMorawetz}
\begin{align*}
\vecMorawetzBasic&=\fnMorawetzS\pr, &
\fnqS&=\fnMorawetzSA\pr(\fnMorawetzSB\fnMorawetzSC) \\
\fnMorawetzS&=\fnMorawetzSA(r)\fnMorawetzSB(r)\fnMorawetzSC(r;\spectralt,\spectralp,\spectralQ) .
\end{align*}
\end{subequations}
For the basic Morawetz estimate in subsection \ref{ss:FIMorawetzBasic}, we introduce a parameter $\epsilondtsquared\geq0$  and an associated norm on the spectral parameters, $\modspectralcollection$, and choose
\begin{subequations}
\label{eq:DefBasicMorawetz}
\begin{align}
\fnMorawetzSA&=\fnMorawetzSAA\fnMorawetzSAB,&
\fnMorawetzSAA&=\frac{\KDelta}{(r^2+a^2)^2},&
\fnMorawetzSAB&=1-\epsilondtsquared^2\frac{\KDelta}{(r^2+a^2)^2},\\
\fnMorawetzSB&=\fnMorawetzSBA\fnMorawetzSBB,&
\fnMorawetzSBA&=\frac{(r^2+a^2)^4}{3r^2+a^2},&
\fnMorawetzSBB&=\frac{1}{2r} ,
\end{align}
\begin{align}
\fnMorawetzSC&=\chimid\fnMorawetzSCA+(1-\chimid)\fnMorawetzSCB, \\
\fnMorawetzSCA(r;\spectralt,\spectralp,\spectralQ)
&=-\pr\left(\frac{\fnMorawetzSA}{\KDelta}\CurlyR(r;M,a;\spectralt,\spectralp,\spectralQ)\right)\modspectralcollection^{-2} ,\\
\fnMorawetzSCB&=-\pr\potlL ,
\end{align}
\end{subequations}and $\chimid$ to be a smooth function that is identically $1$ for $r\in[2.7M,5M]$, identically $0$ for $<2.4M$ or $r>6M$, monotone on each interval in between, and such that $\forall k\in\Naturals:$ $\pr^k\chimid\lesssim M^{-1}$. 

We have chosen these functions so that the following properties hold:
\begin{itemize}
\item $\fnMorawetzSCA$ is a measure of distance from the orbiting null geodesics but remains bounded as $\modspectralcollection\rightarrow\infty$. In particular, $\fnMorawetzSC$ is a rescaling of $\DCurlyRT$. This gives a perfect square in the $\mathcal{U}$ term defined below.  
\item $\fnMorawetzSC$ is independent of the spectral parameters near $r=\rp$ and $r=\infty$. This helps us control the interaction with the error terms arising from the cut-off $\ChiT$ in lemma \ref{lemma:BasicMorawetzEnergy}. 
\item $\epsilondtsquared^2$ is the coefficient in $\fnMorawetzSC$, $\DCurlyRT$, and $\DDCurlyRTT$ of $\spectralt^2$, associated with $-\pt^2$. 
\item $\fnMorawetzSAA$ is such that, if $\fnMorawetzSAB$ had been $1$, which corresponds to $\epsilondtsquared=0$, then the coefficient of $\spectralt^2$ in $\fnMorawetzSC$ would be zero.
\item $\fnMorawetzSAB$ is such that, if $\epsilondtsquared>0$, then the coefficient of $\epsilondtsquared \spectralt^2$ in $\fnMorawetzSC$ is nonnegative and equal to the coefficient of $Q$.
\item $\fnMorawetzSBA$ is such that, if $\fnMorawetzSAB$ and $\fnMorawetzSBB$ had been $1$, then the coefficient of $\spectralt\spectralp$ in $\DDCurlyRTT$ 
would vanish. 
\item $\fnMorawetzSBB$ is such that (i) $\DDCurlyRTT$ is positive everywhere and (ii) $\GenEnergy{\vecMorawetzBasic,\fnqS}[\solu]\lesssim \GenEnergy{\vecTBlend}[\solu]$. Once the form $Cr^{-1}$ was chosen, the factor of $C=1/2$ was chosen so that the coefficient in $\mathcal{A}$, defined below, is $1$.
\end{itemize}
The factors $\fnMorawetzSAA$, $\fnMorawetzSAB$, $\fnMorawetzSBA$, and $\fnMorawetzSC$ are uniquely defined by the above properties. In contrast, the factor $\fnMorawetzSBB$ is both over determined, since we have chosen it to satisfy two conditions that are not \apriori{} obviously compatible, and under determined, since it so happens that there are many functions that allow these two conditions to be satisfied.

For the refined estimate, we take $\fnMorawetzSA$ as above so that $\DCurlyRT$ is unchanged. This has a unique root, which we denote $\rorbit$. We take $\fnMorawetzSB=1$, since we did not find a more useful choice. We take
\begin{align}
\fnMorawetzSC=
M^2\arctan(\rescaledr) \chiTworfar, 
\end{align}
so that it vanishes linearly at the root of $\DCurlyRT$ and each successive derivative introduces an increasing power of $\modspectralcollection$. The factor $\chiTworfar$ is chosen to be smooth, identically one for $M^{-1}(r-\rorbit)<2\rfar$, zero for $M^{-1}(r-\rorbit)>3\rfar$, monotone on the intervals between, and satisfying for all $k\in\Naturals$ $\pr^k\chiThreerfar\lesssim \rfar^{-k}$; it localises to the region near $\rorbit$, where we wish to prove the refined Morawetz estimate.

\subsection{The Fourier-spectral transform}
\label{ss:FourierSpectralTransform}
In this subsection, we introduce a spectral transform of $\solu$, derive energy-generation formulae for it, and prove a lower bound for the spectral parameters for charge-free solutions. 

The spectral transform will define a function $\soluS$ in terms of the spectral parameters $\spectralt$, $\spectralp$, $\spectralQ$ corresponding to the operators $i\pt$, $i\pp$, $\OpQ$. Without a bounded energy estimate, it is not obvious that $\solu$ is even a tempered distribution on $\Exterior$. Thus, we introduce a cut-off in time, to obtain a function that is in $L^2(\di t)$. Let $\FinalTime>0$. This will be the time at which we want to estimate the energy and the upper end point of the interval on which we wish to prove the Morawetz estimate. Let $\ChiT$ be a smooth cut-off function that is identically $1$ on $[0,\FinalTime]$, identically $0$ for $t<-M$ and for $t>\FinalTime+M$, monotone on the intervals in between, and such that for each $k\in\Naturals$, there is a bound on the $k$th derivative by $|\pt^k\ChiT|\lesssim M^{-k}$. 

\newcommand{\spectralQS}{Q}
\newcommand{\spectralsLap}{\ell(\ell+1)}
We let
\begin{align*}
\soluCut&= \solu\ChiT .
\end{align*}
Since $\solu$ is regular and $\ChiT$ is smooth as a function of $t$, and since $\ChiT$ is compactly supported in $t$, it follows that $\soluCut$ is $L^2$ in $t$. Hence it has a Fourier transform in $t$. Let $\spectralt$ by the Fourier variable conjugate to $t$ and $\soluPartialFourier$ be the Fourier transform of $\soluCut$ in the $t$ variable alone, i.e.
\begin{align*}
\soluPartialFourier(\spectralt,r,\theta,\phi)&=\int_{\Reals} \solu(t,r,\theta,\phi) e^{-i\spectralt t}\di t . 
\end{align*}
Since $i\pp$ is self-adjoint, we can perform a standard Fourier transform in $\phi$. We use $\spectralp$ to denote the harmonic parameter associated with $\pp$. Finally, we note that for fixed $\spectralt$ and $\spectralp$, the operator $\OpQ=\sin^{-1}\ph\sin\theta\ph-\cot^2\theta\spectralp^2-a^2\sin^2\theta\spectralt^2$ is symmetric and a bounded perturbation (as an operator on $L^2$) of the standard spherical Laplacian, which is self-adjoint. Since bounded perturbations of self-adjoint operators are themselves self-adjoint \cite{ReedSimon}, the operator $\OpQ$ is self-adjoint and admits a spectral decomposition. Since for fixed $\spectralt$ and $\spectralp$, the operator $\OpQ$ is strictly negative, the spectral decomposition is supported on the nonpositive real line. For convenience, we use $\spectralQ$ to denote the spectral parameter associated with $-\OpQ$, so that $\spectralQ$ is always nonnegative. We use 
\begin{align*}
\soluS(r,\spectralt,\spectralp,\spectralQS)
\end{align*}
to denote the spectral transform in $i\pt$, $i\pp$, and $-\OpQ$. Let $\spectralcollection=(\spectralt,\spectralp,\spectralQ)$, $\dispectral$ denote the spectral measure, and $\spectralspace$ denote the support of the spectral measure. Note that because $i\pp$ has discrete spectrum, $\dispectral$ is a discrete measure with respect to $\spectralp$. The same occurs for $\spectralQ$. The spectral theorem states that the spectral transform is an isomorphism with respect to $L^2$ norms. Thus, with $\diFourSpectral=\dispectral\di r$, 
\begin{align*}
\int_{\mathcal{M}} |\soluCut|^2 \diFourFI
&=\int_{\fullspectralspace} |\soluS|^2 \diFourSpectral .
\end{align*}

The transform satisfies
\begin{align}
\left(\pr\KDelta\pr-\frac{1}{\KDelta}\CurlyR(r;M,a;\spectralt,\spectralp,\spectralQ) -\potlFIz \right)\soluS&=\ErrorSCut +\ErrorSIm,
\label{eq:SpectralFI}
\end{align}
where $\potlFIz=-2M/r$ is an approximation of $\KSigma\potlFI$ at $a=0$, $\ErrorSCut$ is the transform of $\KSigma(\solu(\nabla^\alpha\nabla_\alpha\ChiT)+2(\nabla_\alpha\solu)(\nabla^\alpha\ChiT))$ and $\ErrorSIm$ is the transform of $(\potlFIz-\KSigma\potlFI)\soluCut$. The sign in front of $\CurlyR$ is opposite that appearing in the Fackerell-Ipser equation, since $\spectralt$, $\spectralp$, and $\spectralQS$ correspond to $i\pt$, $i\pp$, and $-\OpQ$, instead of $\pt$, $\pp$, and $\OpQ$. 

\begin{theorem}[Energy generation for the spectral transform]
\label{thm:EnergyGenerationSpectral}
Let $M>0$, $a\in(-M,M)$. Let $(\fnMorawetzS,\fnqS)$ be a pair of of smooth, real-valued functions on $\fullspectralspace$ such that they and all their partial derivatives have bounded limits on $\{\rp\}\times\spectralspace$ and $\{\infty\}\times\spectralspace$.

If $\solu$ is a solution of the Fackerell-Ipser equation \eqref{eq:FI} with spectral transform $\soluS$, then
\begin{align*}
\EnergyS &=\BulkSMain +\BulkSIm ,
\end{align*}
where
\begin{align*}
\EnergyS&=\Re\int_{\fullspectralspace} \Re((\fnMorawetzS(\pr\soluSb)+\fnqS\soluSb)\ErrorSCut) \diFourSpectral, \\
\BulkSMain&=\int_{\fullspectralspace} \left(
\mathcal{A}|\pr\soluS|^2
+\soluSb\mathcal{U}\soluS
+\mathcal{V}|\soluS|^2
\right)\diFourSpectral ,\\
\BulkSIm
&=-\int_{\fullspectralspace} \Re(\fnMorawetzS(\pr\soluSb)\ErrorSIm) \diFourSpectral, 
\end{align*}
and
\begin{subequations}
\label{eq:mathcaldefs}
\begin{align}
\mathcal{A}&=\left(-\frac12(\pr\KDelta)\fnMorawetzS+\frac12\KDelta(\pr\fnMorawetzS)+\fnqS\KDelta\right)
\\
\mathcal{U}&=\left(-\frac12(\pr(\KDelta^{-1}\CurlyR))\fnMorawetzS-\frac12\KDelta^{-1}\CurlyR(\pr\fnMorawetzS)+\KDelta^{-1}\CurlyR\fnqS\right)\\
\mathcal{V}&=\left(-\frac12(\pr\potlFIz)\fnMorawetzS-\frac12\potlFIz(\pr\fnMorawetzS)+\fnqS\potlFIz\right)
\end{align}
\end{subequations}
\end{theorem}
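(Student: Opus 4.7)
The plan is to apply the classical one-dimensional multiplier method to the spectral ODE \eqref{eq:SpectralFI}. For each fixed spectral parameter $\spectralcollection$, I would multiply both sides by the complex-conjugate test function $\fnMorawetzS \pr \soluSb + \fnqS \soluSb$, take the real part, integrate in $r$ over $(\rp, \infty)$, and finally integrate the resulting identity over $\spectralspace$ against $\dispectral$. The two pointwise identities
\begin{align*}
\Re[(\pr \soluSb)(\pr^2 \soluS)] &= \tfrac{1}{2} \pr |\pr \soluS|^2,\\
\Re[(\pr \soluSb) \soluS] &= \tfrac{1}{2} \pr |\soluS|^2,
\end{align*}
together with the product rule $\pr(\KDelta \pr \soluS) = (\pr\KDelta)(\pr\soluS) + \KDelta \pr^2\soluS$, reduce every integrand to a linear combination of $|\pr \soluS|^2$, $|\soluS|^2$, and total $r$-derivatives. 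Collecting coefficients, the principal term $\pr \KDelta \pr \soluS$ produces the $\mathcal{A}|\pr \soluS|^2$ contribution, while the potential terms $\KDelta^{-1}\CurlyR \soluS$ and $\potlFIz \soluS$ produce $\mathcal{U}|\soluS|^2$ and $\mathcal{V}|\soluS|^2$; since $\CurlyR$ and $\potlFIz$ act as real multiplication operators in the spectral picture, the three coefficients agree with \eqref{eq:mathcaldefs}.

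Before this bookkeeping can be trusted, I need to justify the vanishing of boundary terms in $r$. At $r = \rp$, every boundary contribution arising from the integrations by parts contains a factor of $\KDelta$, which vanishes, while by hypothesis $\fnMorawetzS$, $\fnqS$ and their derivatives have bounded limits there. At $r = \infty$ the argument rests on finite speed of propagation: $\solu$ is regular and vanishes on $\hst{0}$ for $r$ large, and the Fackerell-Ipser equation is hyperbolic with principal symbol equal to that of the wave operator, so $\soluCut = \ChiT \solu$ is compactly supported in both $t$ and $r$. Its partial Fourier transform in $t$ followed by spectral transform in $(\phi, \theta)$ inherits compact support in $r$, hence $\soluS(\cdot, \spectralcollection)$ and $\pr \soluS(\cdot, \spectralcollection)$ vanish outside a bounded interval for every $\spectralcollection$.

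On the right-hand side of the multiplied equation, the $\ErrorSCut$ contribution is exactly $\EnergyS$ by definition. The $\ErrorSIm$ contribution splits as $-\BulkSIm + \int \Re(\fnqS \soluSb \cdot \ErrorSIm)\diFourSpectral$, and I would need to argue that the latter piece is accounted for. The natural route is a reality argument using the explicit structure of the potential difference $\KSigma \potlFI - \potlFIz$ that produces $\ErrorSIm$: splitting this difference into real and imaginary parts, the real part gets reabsorbed into $\mathcal{V}$ (by a refinement in which $\potlFIz$ is upgraded to $\Re(\KSigma\potlFI)$), and the imaginary part contracted against a real multiplier $\fnqS$ and the hermitian quantity $|\soluS|^2$ contributes zero after taking $\Re(\cdot)$. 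What survives on the error side is then precisely $\BulkSIm$.

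The main obstacle is the last paragraph. The algebraic identification of $\mathcal{A}, \mathcal{U}, \mathcal{V}$ is routine, and the vanishing of all boundary terms is forced by the compact-support argument. The nontrivial issue is the error-side bookkeeping, namely ensuring that the $\fnqS \soluSb \cdot \ErrorSIm$ term does not appear as an unwanted spurious contribution but is either absorbed into the definitions \eqref{eq:mathcaldefs} or killed by the reality structure of the complex Fackerell-Ipser potential.
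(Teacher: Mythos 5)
Your approach is the same as the paper's: multiply the spectral ODE \eqref{eq:SpectralFI} by $\fnMorawetzS\pr\soluSb+\fnqS\soluSb$, take real parts, integrate by parts in $r$, and observe boundary vanishing. The paper packages this as the radial divergence identity
\begin{align*}
\pr(\KDelta\GenMomentumS)&=\mathcal{A}|\pr\soluS|^2+\soluSb\mathcal{U}\soluS+\mathcal{V}|\soluS|^2+\Re\left((\fnMorawetzS\pr\soluSb+\fnqS\soluSb)(\ErrorSCut+\ErrorSIm)\right)
\end{align*}
for an explicit density $\GenMomentumS$, but the algebra is identical to yours. Your finite-speed-of-propagation argument that $\soluS(\cdot,\spectralcollection)$ has compact $r$-support is a sensible way to justify the $r\to\infty$ boundary term, which the paper leaves implicit. (Note also, incidentally, that carrying the $\fnqS$-multiplier integration by parts through produces an extra $-\tfrac12(\pr\KDelta\pr\fnqS)|\soluS|^2$ contribution to $\mathcal{V}$; it appears in lemma~\ref{lemma:SimplifiedAUV} and in the proof but seems to have been dropped from the theorem's displayed $\mathcal{V}$.)

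The genuine gap is where you flag it, and the reality argument you propose to kill the $\fnqS\soluSb\ErrorSIm$ term does not work, for several reasons. First, $\ErrorSIm$ is the transform of $(\potlFIz-\KSigma\potlFI)\soluCut$, and $\KSigma\potlFI=-2M\KSigma/\Kp^3$ has a genuinely nonzero imaginary part for $a\neq 0$ --- that is the raison d'\^etre of the entire $\BulkSIm$ bookkeeping in this paper --- so ``imaginary part contracted against a real multiplier vanishes under $\Re$'' is not a valid step. Second, the proposed refinement of $\potlFIz$ to $\Re(\KSigma\potlFI)$ is incompatible with the spectral picture: the coefficient in the ODE must depend only on $r$ (and $\spectralcollection$), whereas $\Re(\KSigma\potlFI)$ depends on $\theta$ through both $\KSigma$ and $\Kp$; this is precisely why the paper fixes the purely radial $\potlFIz=-2M/r$ and pushes the entire $\theta$-dependent remainder into $\ErrorSIm$. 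Third, since $\fnqS$ depends on $\spectralcollection$, you cannot freely carry the bilinear form $\Re(\fnqS\soluSb\ErrorSIm)$ back to physical space and argue about reality pointwise. The correct resolution is not to manufacture a cancellation but to accept that the $\fnqS\soluSb\ErrorSIm$ piece is present: it appears in the paper's own proof of this theorem (the displayed identity above) and in every downstream use of $\BulkSIm$ --- compare the formula for $\BulkSIm$ inside the proof of lemma~\ref{lemma:BasicMorawetzEnergy} and the corresponding display in the refined-estimate section, both of which carry the $\fnqS\soluSb$ factor. The theorem's displayed formula for $\BulkSIm$ has evidently dropped this piece (along with a sign), and you should include it rather than try to prove it away.
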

\begin{proof}
For a given $\soluS$, let
\begin{align*}
\EMSSpec&=\frac12(\pr\soluSb)(\pr\soluS) -\frac12\soluSb\KDelta^{-2}\CurlyR\soluS -\frac12\KDelta^{-1}\potlFIz|\soluS|^2 ,\\
\GenMomentumS&=\EMSSpec\fnMorawetzS +\fnqS\Re(\soluSb\pr\soluS)-\frac12(\pr\fnqS)|\soluS|^2 .
\end{align*}
By direct computation
\begin{align*}
\pr(\KDelta\EMSSpec)
&=\Re((\pr\soluSb)(\pr\KDelta\pr\soluS)-\frac12(\pr\KDelta)|\pr\soluS|^2)\\
&\quad+\Re(-(\pr\soluSb)\KDelta^{-1}\CurlyR\soluS-\frac12\soluSb(\pr(\KDelta^{-1}\CurlyR)\soluSb)\\
&\quad+\Re(-(\pr\soluSb)\potlFIz\soluS-\frac12(\pr\potlFIz)|\soluS|^2) \\
&=\Re((\pr\soluSb)(\ErrorSCut+\ErrorSIm))\\
&\quad+\frac12\Re\left(-(\pr\KDelta)|\pr\soluS|^2-\soluSb(\pr(\KDelta^{-1}\CurlyR))\soluS-(\pr\potlFIz)|\soluS|^2\right) .
\end{align*}
Thus,
\begin{align*}
\pr(\KDelta\GenMomentumS)
&=(\pr(\KDelta\EMSSpec))\fnMorawetzS +\KDelta\EMSSpec(\pr\fnMorawetzS)\\
&\quad+\fnqS\KDelta|\pr\soluS|^2+\fnqS\Re(\soluS(\pr\KDelta\pr\soluS))\\
&\quad-\frac12(\pr\KDelta\pr\fnqS)|\soluS|^2 .
\end{align*}
By substituting for $\EMSSpec$, $\pr(\KDelta\EMSSpec)$, and $\pr\KDelta\pr\soluS$, one finds
\begin{align*}
\pr(\KDelta\GenMomentumS)
&=\left(-\frac12(\pr\KDelta)\fnMorawetzS+\frac12\KDelta(\pr\fnMorawetzS)+\fnqS\KDelta\right)|\pr\soluS|^2\\
&\quad+\soluSb\left(-\frac12(\pr(\KDelta^{-1}\CurlyR))\fnMorawetzS-\frac12\KDelta^{-1}\CurlyR(\pr\fnMorawetzS)+\KDelta^{-1}\CurlyR\fnqS\right)\soluS \\
&\quad+\left(-\frac12(\pr\potlFIz)\fnMorawetzS-\frac12\potlFIz(\pr\fnMorawetzS)+\fnqS\potlFIz\right)|\soluS|^2\\
&\quad-\frac12(\pr\KDelta\pr\fnqS)|\soluS|^2\\
&\quad+\Re\left((\fnMorawetzS\pr\soluSb+\fnqS\soluSb)(\ErrorSCut+\ErrorSIm)\right).
\end{align*}
From the definition of $\fnqS$, $\mathcal{A}$, $\mathcal{U}$, $\mathcal{V}$, one finds
\begin{align*}
\pr(\KDelta\GenMomentumS)
&=\mathcal{A}|\pr\soluS|^2+\soluSb\mathcal{U}\soluS+\mathcal{V}|\soluS|^2 \\
&\quad+\Re\left((\fnMorawetzS\pr\soluSb+\fnqS\soluSb)(\ErrorSCut+\ErrorSIm)\right) .
\end{align*}
Integrating this over $\fullspectralspace$, one obtains the desired result, since $\KDelta\GenMomentumS\rightarrow0$ as $r\rightarrow\rp$ and $r\rightarrow\infty$. 
\end{proof}

\begin{lemma}[Simplified energy generation coefficients for factored $\fnMorawetzS$ and $\fnqS$]
\label{lemma:SimplifiedAUV}
If 
\begin{align*}
\fnMorawetzS&=\fnMorawetzSA\fnMorawetzSB\fnMorawetzSC, &
\fnqS&=\frac12\fnMorawetzSA\pr(\fnMorawetzSB\fnMorawetzSC) ,
\end{align*}
then the quantities $\mathcal{A}$, $\mathcal{U}$, and $\mathcal{V}$ from the previous theorem reduce to
\begin{align*}
\mathcal{A}&=\frac12 \KDelta^{3/2}\fnMorawetzSA^{1/2} \pr\left(\frac{\fnMorawetzSA^{1/2}}{\KDelta^{1/2}}\fnMorawetzSB\fnMorawetzSC\right) ,\\
\mathcal{U}&=-\frac12\left(\pr \frac{\fnMorawetzSA\CurlyR}{\KDelta}\right)\fnMorawetzSB\fnMorawetzSC ,\\
\mathcal{V}&=-(\pr\fnMorawetzSA\potlFIz)\fnMorawetzSB\fnMorawetzSC 
-\frac12(\pr\KDelta\pr\fnqS) .
\end{align*}
\end{lemma}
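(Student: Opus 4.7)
The proof is a direct calculation: substitute $\fnMorawetzS = \fnMorawetzSA\fnMorawetzSB\fnMorawetzSC$ and $\fnqS = \frac12\fnMorawetzSA\pr(\fnMorawetzSB\fnMorawetzSC)$ into the raw expressions \eqref{eq:mathcaldefs} and collect terms via the Leibniz rule. The one unifying observation I would make first is that each of $\mathcal{A}$, $\mathcal{U}$, and the ``$\potlFIz$-piece'' of $\mathcal{V}$ has the common shape $-\frac12(\pr F)\fnMorawetzS - \frac12 F(\pr\fnMorawetzS) + F\fnqS$ for the relevant coefficient $F\in\{\KDelta,\,\KDelta^{-1}\CurlyR,\,\potlFIz\}$, so that, before any substitution, the first two terms telescope into $-\frac12\pr(F\fnMorawetzS) + F\fnqS$.

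For $\mathcal{U}$ (with $F=\KDelta^{-1}\CurlyR$) I would then expand
\[
\pr(F\fnMorawetzSA\fnMorawetzSB\fnMorawetzSC) = \pr(F\fnMorawetzSA)\,\fnMorawetzSB\fnMorawetzSC + F\fnMorawetzSA\,\pr(\fnMorawetzSB\fnMorawetzSC).
\]
The second piece, multiplied by $-\frac12$, is by construction exactly $-F\fnqS$, and so it cancels against the $+F\fnqS$ already present, leaving the stated identity $\mathcal{U} = -\frac12\pr(\fnMorawetzSA\CurlyR/\KDelta)\,\fnMorawetzSB\fnMorawetzSC$. The factor $\frac12$ in the definition of $\fnqS$ is chosen precisely to make this cancellation exact, and the same cancellation mechanism disposes of the $\pr(\fnMorawetzSB\fnMorawetzSC)$-contributions arising in $\mathcal{A}$ and $\mathcal{V}$.

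For $\mathcal{V}$ the analogous cancellation collapses the first three terms of \eqref{eq:mathcaldefs} into a single derivative of $\fnMorawetzSA\potlFIz$ contracted against $\fnMorawetzSB\fnMorawetzSC$. The extra $-\frac12(\pr\KDelta\,\pr\fnqS)$ piece is not one of the three standard terms of \eqref{eq:mathcaldefs}, but rather the separate residual isolated in the proof of theorem \ref{thm:EnergyGenerationSpectral} arising from the $-\frac12(\pr\fnqS)|\soluS|^2$ contribution to $\GenMomentumS$; it is simply carried through unchanged. For $\mathcal{A}$ the same mechanism reduces matters to a surviving residual involving $\pr(\fnMorawetzSA/\KDelta)$, which I would repackage as the conjugated first-order operator: by a short direct computation of the product rule, $\frac12\KDelta^{3/2}\fnMorawetzSA^{1/2}\pr(\fnMorawetzSA^{1/2}\KDelta^{-1/2}\fnMorawetzSB\fnMorawetzSC)$ reproduces both the residual derivative-term and the surviving $\fnqS$-term in one expression.

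I do not anticipate any conceptual obstacle; the lemma is purely algebraic, requiring only the product rule and careful tracking of the $\frac12$ factors. The only mild subtlety is to guess the right conjugated form $\KDelta^{3/2}\fnMorawetzSA^{1/2}\pr(\fnMorawetzSA^{1/2}\KDelta^{-1/2}\,\cdot\,)$ for $\mathcal{A}$; this is motivated by the structural desire to read off the sign of $\mathcal{A}$ directly from the monotonicity of the weight $\fnMorawetzSA^{1/2}\KDelta^{-1/2}\fnMorawetzSB\fnMorawetzSC$, which is exactly the positivity condition one needs in the subsequent Morawetz estimates.
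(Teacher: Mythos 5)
The reductions of $\mathcal{U}$ and of the $\potlFIz$-piece of $\mathcal{V}$ are correct, but your ``unifying observation'' fails for $\mathcal{A}$ and the argument for that case is therefore incoherent. From \eqref{eq:mathcaldefs} the middle term of $\mathcal{A}$ is $+\tfrac12\KDelta(\pr\fnMorawetzS)$, with the \emph{opposite} sign from the middle terms of $\mathcal{U}$ and $\mathcal{V}$; this is forced by the fact that the quadratic form $\EMSSpec$ carries the $|\pr\soluS|^2$ term with a plus and the $\CurlyR|\soluS|^2$ and $\potlFIz|\soluS|^2$ terms with a minus. Hence the first two terms of $\mathcal{A}$ do not telescope to $-\tfrac12\pr(\KDelta\fnMorawetzS)$ as you claim; they telescope via the quotient rule to
\begin{align*}
-\tfrac12(\pr\KDelta)\fnMorawetzS+\tfrac12\KDelta(\pr\fnMorawetzS)
= \tfrac12\KDelta^2\,\pr\!\left(\KDelta^{-1}\fnMorawetzS\right).
\end{align*}
Once you substitute $\fnMorawetzS=\fnMorawetzSA\fnMorawetzSB\fnMorawetzSC$, the piece $\tfrac12\KDelta\fnMorawetzSA\,\pr(\fnMorawetzSB\fnMorawetzSC)$ coming out of this quotient-rule expansion has the \emph{same} sign as $\fnqS\KDelta$, so far from cancelling it reinforces it. In other words, the $\pr(\fnMorawetzSB\fnMorawetzSC)$-contributions do not drop out of $\mathcal{A}$, and your later sentence that ``the surviving $\fnqS$-term'' is reproduced inside the conjugated operator directly contradicts your earlier assertion that the cancellation ``disposes of'' these contributions in $\mathcal{A}$. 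The correct mechanism is that the surviving $\pr(\fnMorawetzSB\fnMorawetzSC)$ term is exactly what lets the remaining pieces be recombined into the conjugated derivative $\KDelta^{3/2}\fnMorawetzSA^{1/2}\,\pr\!\bigl(\fnMorawetzSA^{1/2}\KDelta^{-1/2}\fnMorawetzSB\fnMorawetzSC\bigr)$, so the ``mild subtlety'' you defer is where the actual work for $\mathcal{A}$ lives, and the telescoping you propose would give a different (wrong) formula without a $\pr(\fnMorawetzSB\fnMorawetzSC)$ term at all.

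Your treatment of $\mathcal{U}$ is correct and matches the paper's intent, as is the identification that the $-\tfrac12(\pr\KDelta\pr\fnqS)$ term in the lemma's $\mathcal{V}$ is the separate residual from the $-\tfrac12(\pr\fnqS)|\soluS|^2$ contribution to $\GenMomentumS$, carried through unchanged. But to make the proof sound you must (i) state the sign in $\mathcal{A}$ correctly, (ii) use the quotient-rule form of the telescoping for $\mathcal{A}$, and (iii) actually carry out the short computation showing that the residual plus the non-cancelling $\fnqS\KDelta$ term recombine into the conjugated operator. A careful reader carrying out (iii) will also notice a factor-of-two discrepancy between the definitions of $\mathcal{A}$, $\fnqS$ and the lemma's stated formula that your sketch, as written, would not catch.
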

\begin{proof}
Direct computation. 
\end{proof}

The lower bound by the angular derivatives for charge-free solutions, lemma \ref{lemma:LowerBoundForAngularDerivatives}, will be applied in lemma \ref{lemma:BulkMainMorawetzFI} to the spectral transform $\soluS$, instead of the original solution $\solu$. The following lemma provides the spectral version of this lower bound. 

\begin{lemma}[Spectral lower bound for $\spectralQ$]
\label{lemma:LowerBoundForSpectralQ}
\hypothesisA, $\FinalTime>0$, 
if $\MaxF$ is a solution of the Maxwell equation \eqref{eq:Maxwell}, $\solu=\solu[\MaxF]$, and $\soluS$ is the $\FinalTime$-spectral transform of $\solu$, then for all $r\in(\rp,\infty)$
\begin{align*}
\left(2-C\frac{|a|}{M}\right)\int_{\spectralspace} |\soluS|^2 \dispectral
&\leq \int_{\spectralspace} (\spectralQ+\spectralp^2)|\soluS|^2 \dispectral \\
&\quad+C a^2\int_\Reals\int_{\Stwo} \frac{\KDelta}{r^2+a^2}\ChiT^2|\phipm|^2 \diomega \di t
\end{align*}
\end{lemma}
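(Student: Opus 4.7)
The plan is to use Plancherel to transport the physical-space inequality of lemma~\ref{lemma:LowerBoundForAngularDerivatives} to the spectral side, handling the discrepancy between $\solu=\Kp\phiz$ and $\phiz$ by Young's inequality. First, recall that $i\pt$, $i\pp$, and $-\OpQ$ are the commuting self-adjoint operators whose joint spectral transform maps $\soluCut$ to $\soluS$; Plancherel at each fixed $r\in(\rp,\infty)$ gives
\begin{align*}
\int_{\spectralspace}|\soluS|^2\dispectral &= \int_{\Reals}\int_{\Stwo}|\soluCut|^2\diomega\di t,\\
\int_{\spectralspace}(\spectralQ+\spectralp^2)|\soluS|^2\dispectral &= \int_{\Reals}\int_{\Stwo}\bar{\soluCut}\,(-\OpQ-\pp^2)\soluCut\diomega\di t.
\end{align*}
A direct comparison of $\OpQ$ with the round-sphere Laplacian yields the operator identity $-\OpQ-\pp^2 = -\sin^{-1}\theta\ph\sin\theta\ph-\sin^{-2}\theta\pp^2-a^2\sin^2\theta\pt^2$; integration by parts on $\Stwo$ and in $t$ (the latter using the compact $t$-support of $\soluCut$ supplied by $\ChiT$) rewrites the second display as $\int(|\pAng\soluCut|^2+a^2\sin^2\theta|\pt\soluCut|^2)\diomega\di t$.

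Second, I would convert between $\soluCut=\Kp\phiz\ChiT$ and $\phiz$. Since $|\Kp|^2=\KSigma$ and $\ChiT$ depends only on $t$, $|\soluCut|^2=\KSigma|\phiz|^2\ChiT^2$. Using $\ph\Kp=ia\sin\theta$ and $\pp\Kp=0$, one has the pointwise expansion
\begin{align*}
|\pAng\solu|^2 = \KSigma|\pAng\phiz|^2 + 2a\sin\theta\,\Im(\bar{\phiz}\Kp\ph\phiz) + a^2\sin^2\theta|\phiz|^2,
\end{align*}
and the middle cross term is controlled by Young's inequality with parameter $\epsilon$: $|2a\sin\theta\,\Im(\bar{\phiz}\Kp\ph\phiz)|\leq\epsilon\KSigma|\pAng\phiz|^2+\epsilon^{-1}a^2|\phiz|^2$.

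Third, apply lemma~\ref{lemma:LowerBoundForAngularDerivatives} pointwise in $(t,r)$ to $\phiz$ (the charge-free hypothesis is implicit in the current context, the Coulomb part having already been separated off via proposition~\ref{prop:ChargeDecomposition}; indeed, the inequality fails outright on Coulomb solutions). Combining with the pointwise bounds $r^2\leq\KSigma\leq r^2+a^2$, the elementary estimate $a^2\int|\phiz|^2\diomega\lesssim(|a|/M)^2\int\KSigma|\phiz|^2\diomega$ valid for $r\geq\rp$, and the choice $\epsilon\sim|a|/M$ in Young's inequality, yields
\begin{align*}
\int_{\Stwo}|\pAng\solu|^2\diomega \geq (2-C|a|/M)\int_{\Stwo}\KSigma|\phiz|^2\diomega + (1-C|a|/M)r^2a^2\potlL\int_{\Stwo}|\phipm|^2\diomega.
\end{align*}
Multiplying by $\ChiT^2$ and integrating in $t$, discarding the nonnegative second term on the right, and then freely adding back the nonnegative $a^2\sin^2\theta|\pt\soluCut|^2$ and the slack $Ca^2\KDelta(r^2+a^2)^{-1}\ChiT^2|\phipm|^2$ on the right-hand side (noting $r^2\potlL\lesssim\KDelta/(r^2+a^2)$) produces, via the Plancherel identities of the first paragraph, exactly the claimed inequality.

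The main obstacle is the bookkeeping of $|a|/M$ errors from three sources: the $\theta$-dependence of $\KSigma=r^2+a^2\cos^2\theta$ when passing from $\int|\phiz|^2\diomega$ to $\int\KSigma|\phiz|^2\diomega$; the cross term $2a\sin\theta\,\Im(\bar\phiz\Kp\ph\phiz)$ in the expansion of $|\pAng\solu|^2$; and the Young reabsorption $\epsilon^{-1}a^2|\phiz|^2$. The balancing choice $\epsilon\sim|a|/M$ ensures that each of these contributes at most $O(|a|/M)$ to the loss on the coefficient $2$, which is the sharp constant coming from $\ell=1$ in the round-sphere spectrum.
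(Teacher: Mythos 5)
Your proposal is correct and takes essentially the same route as the paper: both arguments start from lemma~\ref{lemma:LowerBoundForAngularDerivatives}, pass from $\phiz$ to $\soluCut=\Kp\phiz\ChiT$ with $O(|a|/M)$ errors, append the nonnegative $a^2\sin^2\theta|\pt\soluCut|^2$, integrate by parts in $t$ and the angular variables, and apply the spectral isometry. The only cosmetic difference is that the paper compares the integrals of $|\Kp\phiz|^2$ and $|\pAng(\Kp\phiz)|^2$ with those of $|r\phiz|^2$ and $|r\pAng\phiz|^2$ directly, whereas you Leibniz-expand $|\pAng(\Kp\phiz)|^2$ and close with Young's inequality; both yield the same $O(|a|/M)$ loss. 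One small caveat: your intermediate display treats the $r^2a^2\potlL\int|\phipm|^2$ contribution as a nonnegative quantity on the favourable side, which follows from lemma~\ref{lemma:LowerBoundForAngularDerivatives} \emph{as stated}, but the proof of that lemma actually produces the $\phipm$ term with the opposite sign (so it sits as a loss, not a gain); this does not affect your conclusion, since the target already carries the $+Ca^2\KDelta(r^2+a^2)^{-1}\ChiT^2|\phipm|^2$ slack on the right, but it is worth flagging so that the ``discard the nonnegative term'' step is not taken literally.
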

\begin{proof}
To begin, multiply the result of lemma \ref{lemma:LowerBoundForAngularDerivatives} by $(r\ChiT)^2$, observe that this factor commutes with angular derivatives, observe that the difference between $\int |(r-ia\cos\theta)\phiz|^2 \diomega$ and $\int |r\phiz|^2\diomega$ is bounded by $|a|M^{-1}$ times either of these, and that the difference between $\int |\pAng ((r-ia\cos\theta)\phiz)|^2\diomega$ and $\int |r\pAng \phiz|^2\diomega$ is bounded by $|a|M^{-1} \int (|r\phiz|^2+|r\pAng\phiz|)\diomega$. From these observations, one concludes
\begin{align*}
\left(2-\frac{|a|}{M}\right) \int |\soluCut|^2\diomega 
&\leq \int |\pAng\soluCut|^2\diomega \\
&\qquad+Ca^2\frac{\KDelta}{r^2+a^2}\ChiT^2\int |\phipm|^2 \diomega .
\end{align*}
The inequality remains valid if, to the right hand side, one adds the positive term $\int a^2\sin^2\theta|\pt\soluCut|^2\diomega$. Since $\soluCut$ is compactly supported in the time and angular variables, one can integrate in $t$ and then apply integration by parts in the time and angular variables to obtain
\begin{align*}
\left(2-\frac{|a|}{M}\right) \int |\soluCut|^2\diomega \di t
&\leq \int \soluCutb (-\OpQ-\pp^2)\soluCut\diomega\di t \\
&\qquad+Ca^2\frac{\KDelta}{r^2+a^2}\int |\phipm|^2 \ChiT^2\diomega\di t .
\end{align*}
Applying the spectral transform to the term on the left and the first term on the right gives the desired result. 
\end{proof}

\subsection{The basic Morawetz estimate for the Fackerel-Ipser equation}
\label{ss:FIMorawetzBasic}
In this section, take $\vecMorawetzBasic$, $\fnMorawetzS$, $\fnMorawetzSA$, $\fnMorawetzSAA$, $\fnMorawetzSAB$, $\fnMorawetzSAB$, $\fnMorawetzSB$, $\fnMorawetzSBA$, $\fnMorawetzSBB$, $\fnMorawetzSC$, $\chimid$, $\fnMorawetzSA$, $\fnMorawetzSCB$, and $\fnqS$ as defined in subsection \ref{ss:FIMorawetzOutline}. For $\epsilondtsquared\geq0$, we define
\begin{align*}
\modspectralcollection^2 
&=\epsilondtsquared^2\spectralt^2 +\spectralp^2+\spectralQ .
\end{align*}

\begin{lemma}[Properties of $\DCurlyRT$ and $\DDCurlyRTT$]
\label{lemma:PropertiesOfDCurlyRT}
\hypothesisD, 
for each set of the spectral parameters $\spectralcollection=(\spectralt,\spectralp,\spectralQ)$, the function $\DCurlyRT$ has a unique root, $\rorbit$, in the exterior, and
\begin{align*}
|\rorbit-3M|&\lesssim \smallparameter M, \\
\left|\DCurlyRT\right|
&\geq 2\left(1-C\smallparameter\right) \frac{|r-\rorbit|}{r^4} ,\\
-\DDCurlyRTT
&\geq \left(1-C\smallparameter\right) \frac{M}{r^2}\modspectralcollection^2 .
\end{align*}
\end{lemma}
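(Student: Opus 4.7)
The proof proceeds by perturbation from the limit $a=0$, $\epsilondtsquared=0$. First I would compute $\DCurlyRT$ and $\DDCurlyRTT$ in closed form at $a=0,\epsilondtsquared=0$. Substituting the explicit forms $\fnMorawetzSAA|_{a=0}=(r-2M)/r^3$, $\fnMorawetzSAB|_{\epsilondtsquared=0}=1$, $\fnMorawetzSB|_{a=0}=r^5/6$, and $\CurlyR|_{a=0}=-r^2\spectralt^2+(r^2-2Mr)(\spectralp^2+\spectralQ)$, and differentiating, one finds
\begin{align*}
\DCurlyRT|_{a=0,\epsilondtsquared=0}
&=\frac{2\spectralt^2}{r^3}-\frac{2(r-3M)(\spectralp^2+\spectralQ)}{r^4},\\
\DDCurlyRTT|_{a=0,\epsilondtsquared=0}
&=-\frac{M(\spectralp^2+\spectralQ)}{r^2}.
\end{align*}
The key observation for the second formula is that the $\spectralt^2$ piece of $\DCurlyRT$ becomes $\spectralt^2/3$ after multiplication by $(\fnMorawetzSA^{1/2}/\KDelta^{1/2})\fnMorawetzSB=r^3/6$, whose radial derivative vanishes, so only the $(\spectralp^2+\spectralQ)$ contribution survives. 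Since $\modspectralcollection^2=\spectralp^2+\spectralQ$ at $\epsilondtsquared=0$, the second formula already realises the claimed bound. The first formula has a unique zero (when $\spectralp^2+\spectralQ>\spectralt^2$) at $\rorbit|_0=3M(\spectralp^2+\spectralQ)/(\spectralp^2+\spectralQ-\spectralt^2)$.

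Next, I would absorb the corrections from $\epsilondtsquared^2>0$ and $a\neq 0$. The factor $\fnMorawetzSAB=1-\epsilondtsquared^2\KDelta/(r^2+a^2)^2$ contributes to $\DCurlyRT$ and $\DDCurlyRTT$ terms whose $\spectralt^2$ coefficients are accompanied by $\epsilondtsquared^2$, matching the weight of $\spectralt^2$ in $\modspectralcollection^2$; consequently, after normalising by $\modspectralcollection^{-2}$ the quantities $\DCurlyRT/\modspectralcollection^2$ and $\DDCurlyRTT/\modspectralcollection^2$ are uniformly bounded in $\spectralcollection$. The $a$-dependence is a rational perturbation contributing $O(|a|/M)$ (hence $O(\smallparameter)$) relative to the leading term on each compact subset of the exterior, since the coefficients are homogeneous rational functions of $r$ of maximal degree in the sense of section \ref{s:FurtherGeometryOfKerr}.

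Applying the implicit function theorem to the normalised equation $\DCurlyRT/\modspectralcollection^2=0$ near $(r,a,\epsilondtsquared)=(3M,0,0)$ for each fixed $\spectralcollection$ then yields a unique simple root $\rorbit$ in a neighbourhood of $3M$ of radius $\gtrsim M$, with $|\rorbit-3M|\lesssim\smallparameter M$. Uniqueness in the entire exterior follows from the near-affine structure in $r$ of $\DCurlyRT|_0$ combined with the smallness of the perturbation, which is uniformly $C^1$-bounded after normalisation. The quantitative lower bound $|\DCurlyRT|\geq 2(1-C\smallparameter)|r-\rorbit|/r^4$ follows from the mean value identity $\DCurlyRT(r)=(r-\rorbit)\int_0^1\pr\DCurlyRT|_{\rorbit+s(r-\rorbit)}\,ds$ together with $\pr\DCurlyRT\geq 2(1-C\smallparameter)\modspectralcollection^2/r^4$ near $\rorbit$, read off from the step-one formula and its perturbation. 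The analogous lower bound for $-\DDCurlyRTT$ follows directly from step one plus perturbation.

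The main obstacle is ensuring uniformity of the perturbative estimates across the full range of spectral parameters, in particular for large $|\spectralt|$ relative to $\sqrt{\spectralp^2+\spectralQ}$: the weight $\fnMorawetzSAB$ has been designed precisely so that the $\epsilondtsquared^2\spectralt^2$ pieces of $\DCurlyRT$ and $\DDCurlyRTT$ enter on equal footing with the $\spectralp^2+\spectralQ$ pieces, and careful book-keeping of this balance is what underlies the uniform normalisation by $\modspectralcollection^2$.
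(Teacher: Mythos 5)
Your base-case computation contains the term $2\spectralt^2/r^3$ in $\DCurlyRT|_{a=0,\epsilondtsquared=0}$, and that term kills the lemma. The root you compute, $\rorbit=3M(\spectralp^2+\spectralQ)/(\spectralp^2+\spectralQ-\spectralt^2)$, is \emph{not} within $O(\smallparameter M)$ of $3M$: as $\spectralt^2\to\spectralp^2+\spectralQ$ it runs off to infinity, and for $\spectralt^2\geq\spectralp^2+\spectralQ$ there is no root in the exterior at all. Your later claim that $\DCurlyRT/\modspectralcollection^2$ is uniformly bounded in $\spectralcollection$ directly contradicts your own formula: $\spectralt^2$ enters $\DCurlyRT|_0$ with no $\epsilondtsquared^2$ prefactor, while $\modspectralcollection^2=\epsilondtsquared^2\spectralt^2+\spectralp^2+\spectralQ$ weights $\spectralt^2$ by $\epsilondtsquared^2$, so the ratio blows up as $\spectralt\to\infty$ with $\spectralp,\spectralQ$ fixed. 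The paper's own proof has no such term: it records that the coefficient of $\spectralt^2$ in $\fnMorawetzSA\KDelta^{-1}\CurlyR$ is $-1+\epsilondtsquared^2\potlL$, a constant plus an $\epsilondtsquared^2$-piece, so after differentiation every surviving $\spectralt^2$-dependence carries the factor $\epsilondtsquared^2$ and is controlled by $\modspectralcollection^2$.

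The discrepancy traces to a typo in the printed definition of $\CurlyR$ in \eqref{eq:DefR} that you took at face value: the coefficient of $\pt^2$ must be $-(r^2+a^2)^2$, not $-(r^2+a^2)$. Matching $\pr\KDelta\pr+\KDelta^{-1}\CurlyR$ against $\KSigma\nabla^\alpha\nabla_\alpha$, whose $\pt^2$ part is $-\KPi/\KDelta=-(r^2+a^2)^2/\KDelta+a^2\sin^2\theta$, forces $-(r^2+a^2)^2\pt^2$ in $\CurlyR$ with the $a^2\sin^2\theta\pt^2$ piece absorbed into $\OpQ$. With that correction, $\fnMorawetzSA\KDelta^{-1}\CurlyR|_{a=0,\epsilondtsquared=0}=-\spectralt^2+\potlL(\spectralp^2+\spectralQ)$, so $\DCurlyRT|_0=(\pr\potlL)(\spectralp^2+\spectralQ)=-2(r-3M)(\spectralp^2+\spectralQ)/r^4$ with no $\spectralt^2$ term, and the unique root is exactly $3M$. (Your $\DDCurlyRTT|_0=-M(\spectralp^2+\spectralQ)/r^2$ incidentally comes out the same either way, because the $\spectralt^2$ piece of $\fnMorawetzSA^{1/2}\KDelta^{-1/2}\fnMorawetzSB\DCurlyRT$ is the constant $\spectralt^2/3$ and dies under $\pr$; this is a useful consistency check on both computations.) Once the stray $\spectralt^2$ is removed from $\DCurlyRT|_0$, your perturbative strategy mirrors the paper's and would close.
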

\begin{proof}
Recall $\DCurlyRT=\pr(\fnMorawetzSA\KDelta^{-1}\CurlyR)$ from subsection \ref{ss:FIMorawetzOutline}. The coefficient of $\spectralt^2$ in $\fnMorawetzSA\KDelta^{-1}\CurlyR$ is $-1+\epsilondtsquared^2\potlL$. Because the derivative of a constant vanishes, 
\begin{align*}
\DCurlyRT
&=(\pr\potlL)(\epsilondtsquared^2\spectralt^2+\spectralp^2+\spectralQ)\\
&\quad+\pr\left(-\frac{4aMr}{(r^2+a^2)^2}\spectralt\spectralp-\frac{a^2}{(r^2+a^2)^2}(1-\epsilondtsquared^2\potlL)\spectralp^2+\epsilondtsquared^2\potlL^2(\spectralp^2+\spectralQ)\right) .
\end{align*}
It is convenient to rewrite this as a quadratic expression in $\epsilondtsquared\spectralt$, $\spectralp$, and $\spectralQ^{1/2}$, namely
\begin{align*}
\DCurlyRT
&=(\pr\potlL)\modspectralcollection^2\\
&\quad+\left(-\frac{a}{\epsilondtsquared}\frac{Mr}{(r^2+a^2)^2}(\epsilondtsquared\spectralt)\spectralp-\frac{a^2}{(r^2+a^2)^2}(1-\epsilondtsquared^2\potlL)\spectralp^2+\epsilondtsquared^2\potlL^2(\spectralp^2+\spectralQ)\right) .
\end{align*} 
At $a=0$, one has $\pr\potlL=-2r^{-4}(r-3M)$. Since $\potlL$ is of maximal degree and not of order $0$, the derivative is of maximal degree. Hence
\begin{align}
\left|\DCurlyRT-\frac{-2(r-3M)}{r^4}\modspectralcollection^2\right|
\lesssim \smallparameter\frac{M}{r^4}\modspectralcollection^2 .
\label{eq:DCurlyRTEstimateFirst}
\end{align}

Now consider 
\begin{align*}
\DDCurlyRTT&=\pr\left(\frac{\fnMorawetzSA^{1/2}}{\KDelta^{1/2}}\fnMorawetzSB\DCurlyRT\right) .
\end{align*}
At $a=0$, the term to compare this with is
\begin{align*}
\pr\left(\frac{-1}{3}\left(1-\frac{3M}{r}\right)\right)
&=-\frac{M}{r^2} .
\end{align*}
Here one is differentiating a homogeneous rational function of maximal degree, but of degree $0$, so that the resulting function is no longer of maximal. Nonetheless, one can still estimate the difference between the true value and the approximation, by
\begin{align*}
\left|\DDCurlyRTT -\left(-\frac{M}{r^2}\right)\modspectralcollection^2\right|
&\lesssim \smallparameter \frac{M}{r^2}\modspectralcollection^2 .
\end{align*}
Thus,
\begin{align*}
\DDCurlyRTT
\geq& -\left(1-C\smallparameter\right) \frac{M}{r^2}\modspectralcollection^2 .
\end{align*}

Since $\DCurlyRT$ at $a=0$ is $-2r^{-4}(r-3M)(\epsilondtsquared^2\spectralt^2+(1-2\epsilondtsquared\potlL)(\spectralp^2+\spectralQ))$ has a simple root at $r=3M$, by continuity, the function $\DCurlyRT$ continues to have a root near $r=3M$ when $a$ and $\epsilondtsquared$ are small. For sufficiently small $a$ and $\epsilondtsquared$, because $\DDCurlyRTT$ is strictly positive, the root of $\DCurlyRT$ remains simple and unique. Let $\rorbit$ denote this root. From estimate \eqref{eq:DCurlyRTEstimateFirst}, one finds
\begin{align*}
|\rorbit-3M|&\lesssim \smallparameter M.
\end{align*}
Since $\DCurlyRT$ vanishes at $r=\rorbit$, its derivative there is only a small deviation from the value at $a=0$, and from estimate \eqref{eq:DCurlyRTEstimateFirst}, one finds
\begin{align*}
\left|\DCurlyRT\right|
&\geq 2\left(1-C\smallparameter\right) \frac{|r-\rorbit|}{r^4} .
\end{align*}
\end{proof}

\begin{lemma}[Bound for the main bulk term in the basic Morawetz estimate]
\label{lemma:BulkMainMorawetzFI}
Let $\fnMorawetzS$ and $\fnqS$ be defined as in equations \eqref{eq:DefGenMorawetz}-\eqref{eq:DefBasicMorawetz}. 

\hypothesisD,
\hypothesisFsoluSNoCharge, 
then
\begin{align*}
\BulkSMain 
\geq& C_1\int_{\fullspectralspace} 
\frac{M\KDelta^2}{r^2(r^2+a^2)}|\pr\soluS|^2 
+\frac{(r-\rorbit)^2}{r^3}\modspectralcollection^2|\soluS|^2
+\frac{M}{r^2}|\soluS|^2 \diFourSpectral \\
&\quad- C_2 a^2 \int_{\Exterior} \frac{M\KDelta}{(r^2+a^2)^2r^2} |\phipm|^2\ChiT^2 \diFourFI .
\end{align*}
\end{lemma}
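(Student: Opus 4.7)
The plan is to substitute the factored form of $(\fnMorawetzS,\fnqS)$ from lemma~\ref{lemma:SimplifiedAUV} into $\BulkSMain$ and then establish positivity of the integrand pointwise in $(r,\spectralcollection)$, up to error terms that can be absorbed for $\smallparameter$ sufficiently small. The argument naturally splits into three zones set by $\chimid$: the inner-outer zone ($r<2.4M$ or $r>6M$), where $\chimid=0$ and $\fnMorawetzSC=-\pr\potlL$ is spectral-independent; the middle zone $r\in[2.7M,5M]$, where $\chimid=1$ and $\fnMorawetzSC=-\modspectralcollection^{-2}\DCurlyRT$; and the two transition intervals.

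In the middle zone, the rescaling $\fnMorawetzSC=-\modspectralcollection^{-2}\DCurlyRT$ is engineered so that
\[
\pr\!\left(\tfrac{\fnMorawetzSA^{1/2}}{\KDelta^{1/2}}\fnMorawetzSB\fnMorawetzSC\right) = -\modspectralcollection^{-2}\DDCurlyRTT.
\]
The simplified formulas of lemma~\ref{lemma:SimplifiedAUV} then give $\mathcal{A}=-\tfrac12\KDelta^{3/2}\fnMorawetzSA^{1/2}\modspectralcollection^{-2}\DDCurlyRTT$ and $\mathcal{U}=\tfrac12|\DCurlyRT|^2\fnMorawetzSB\modspectralcollection^{-2}$, both manifestly nonnegative by lemma~\ref{lemma:PropertiesOfDCurlyRT}. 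Inserting the quantitative bounds $-\DDCurlyRTT\geq(1-C\smallparameter)Mr^{-2}\modspectralcollection^2$ and $|\DCurlyRT|\geq 2(1-C\smallparameter)|r-\rorbit|r^{-4}$ from that lemma, and using the leading behaviours $\fnMorawetzSA\sim\KDelta(r^2+a^2)^{-2}$ and $\fnMorawetzSB\sim r^5$, produces exactly $\mathcal{A}\gtrsim M\KDelta^2/(r^2(r^2+a^2))$ and $\mathcal{U}\gtrsim(r-\rorbit)^2\modspectralcollection^2/r^3$. In the inner-outer zone, $\fnMorawetzSC=-\pr\potlL$ carries its own natural positivity pointing away from $r=3M$, and the required nonnegativity of $\mathcal{A}$ and $\mathcal{U}$ with the same size is obtained by a computation that mirrors the Morawetz estimate of \cite{AnderssonBlue}; the $\modspectralcollection^2$ factor emerges because $\CurlyR$ is homogeneous quadratic in the spectral variables. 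In the transition intervals, the bounds $|\pr^k\chimid|\lesssim M^{-k}$ and $|r-\rorbit|\gtrsim M$ (since $\rorbit=3M+O(\smallparameter\cdot M)$) allow the cross-terms between $\fnMorawetzSCA$ and $\fnMorawetzSCB$ to be controlled and absorbed.

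For the $\mathcal{V}|\soluS|^2$ term, the dominant contribution in the simplified expression $\mathcal{V}=-(\pr(\fnMorawetzSA\potlFIz))\fnMorawetzSB\fnMorawetzSC-\tfrac12\pr\KDelta\pr\fnqS$ comes from $-\tfrac12\pr\KDelta\pr\fnqS$, which a direct $a=\epsilondtsquared=0$ computation shows is positive of size $Mr^{-2}$ in each zone. This yields a spectral-independent contribution of the required order; however, it alone is not enough because the $\mathcal{U}$-bound degenerates at $r=\rorbit$ and requires the charge-free hypothesis to be converted into control at zeroth order. This conversion is effected by lemma~\ref{lemma:LowerBoundForSpectralQ}, which yields $\int|\soluS|^2\dispectral\lesssim\int(\spectralQ+\spectralp^2)|\soluS|^2\dispectral+Ca^2\int_{\Reals}\int_{\Stwo}\frac{\KDelta}{r^2+a^2}\ChiT^2|\phipm|^2\diomega\,\di t$; the last error term is the origin of the subtracted $|\phipm|^2$ term on the right-hand side of the claim.

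The principal obstacle is the bookkeeping of positivity in the transition intervals of $\chimid$ and in the cross-terms between $\fnMorawetzSCA$ and $\fnMorawetzSCB$, together with the absorption of all errors of order $\smallparameter$ coming from the deviations of $a$ and $\epsilondtsquared$ from zero: one must verify that the carefully chosen leading positive coefficients dominate these errors, which is precisely why smallness of both $|a|/\epsilondtsquared$ and $\epsilondtsquared/M$ (as in definition~\ref{def:epsilondtsquaredalsosmall}) is imposed, rather than merely $|a|/M$ small.
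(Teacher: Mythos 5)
Your proposal misses the central technical device of the paper's proof and makes a claim that is actually false. You assert that ``a direct $a=\epsilondtsquared=0$ computation shows [$-\tfrac12\pr\KDelta\pr\fnqS$, and hence $\mathcal{V}$] is positive of size $Mr^{-2}$ in each zone.'' This is wrong: the paper computes that at $a=\epsilondtsquared=0$ the full potential coefficient $\mathcal{V}$ equals $(3Mr^2-12M^2r+6M^3)/(6r^4)$, which is strictly negative for $r$ between roughly $2M$ and $(2+\sqrt{2})M\approx3.41M$ (\eg at $r=3M$ it is $-M^{-1}/18$), and the paper explicitly remarks that $\mathcal{V}$ is ``so negative'' that $\int(\mathcal{A}|\pr\soluS|^2+\mathcal{V}|\soluS|^2)\,\di r$ alone can fail to be positive. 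Moreover, even after using lemma \ref{lemma:LowerBoundForSpectralQ} to convert the $\mathcal{U}$ term into a zeroth-order contribution, the combined coefficient of $|\soluS|^2$ is $(11r^2-60Mr+78M^2)/(6r^4)$, which is still negative for $r$ between the two roots near $2.1M$ and $3.3M$. Thus pointwise nonnegativity of the $|\soluS|^2$ coefficient simply does not hold, in any zone, and your zone-by-zone pointwise argument cannot close.

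What you are missing is the paper's Step 2 and Step 3: the Hardy/ODE estimate. Since the net potential coefficient is negative on part of the exterior, the positivity must be integrated --- one shows $\int_0^\infty(\HardyprCoeff|\partialHardy\soluS|^2+\HardyPotl|\soluS|^2)\,\di x\geq0$ by exhibiting a positive solution of $-\partialHardy^2 v+\HardyPotlRed v=0$ in terms of a Gauss hypergeometric function ${}_2F_1(\Hardya,\Hardyb,\Hardyc;\cdot)$ and checking the parameter conditions $\Hardya<0<\Hardyb<\Hardyc$. That step provides the mechanism by which the positive $\mathcal{A}|\pr\soluS|^2$ term compensates for the negative zeroth-order coefficient. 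Without it, the lower bound $+C_1\int(M/r^2)|\soluS|^2$ in the lemma's conclusion cannot be established. (Your description of the roles of lemmas \ref{lemma:SimplifiedAUV}, \ref{lemma:PropertiesOfDCurlyRT}, and \ref{lemma:LowerBoundForSpectralQ}, and of the origin of the $|\phipm|^2$ error term, is otherwise accurate.)
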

\begin{proof}
At first, arbitrary, small values of $\rfar$ will be permitted. For each such $\rfar$, there will be $\epsilonSlowRotationIntro$ and $\epsilondtsquaredMax$ for which estimates will hold uniformly in $a$ and $\epsilondtsquared$. Towards the end of the proof, a particular value of $\rfar$ will be chosen. Since the estimates prior to that point in the proof will have been proven uniformly in $a$ and $\epsilondtsquared$, the values of $\epsilonSlowRotationIntro$ and $\epsilondtsquaredMax$ can be shrunk further, if necessary. 

\StartStep
\Step{The $\mathcal{A}$, $\mathcal{U}$, and $\mathcal{V}$ terms}
First, observe that for $a=0=\epsilondtsquared$,
\begin{align*}
\fnMorawetzSCA
&=-\frac{\DCurlyRT}{\modspectralcollection^2}
=-\pr\potlL 
=\fnMorawetzSCB
=\fnMorawetzSC ,
\end{align*}
and that, uniformly in $a/\epsilondtsquared$ and $\epsilondtsquared/M$, in the support of $\pr\chimid$, 
\begin{align*}
\left|\fnMorawetzSCA-\fnMorawetzSCB\right|
&\lesssim \smallparameter \frac{1}{r^3} .
\end{align*}
Thus, $\fnMorawetzSC$ satisfies an estimate like $\DCurlyRT$, namely, 
\begin{align*}
|\fnMorawetzSC|
\geq \left(2-C\smallparameter\right)\frac{|r-\rorbit|}{r^4} .
\end{align*}
Similarly, the dominant parts of $\DDCurlyRTT\modspectralcollection^{-2}$ and $\pr(\fnMorawetzSA^{1/2}\KDelta^{-1/2}\fnMorawetzSB\fnMorawetzSC)$ coincide, so
\begin{align*}
-\pr\left(\frac{\fnMorawetzSA^{1/2}}{\KDelta^{1/2}}\fnMorawetzSB\fnMorawetzSC\right)
&\geq \left(1-C\smallparameter\right) \frac{M}{r^2} .
\end{align*}

From the formula for $\mathcal{A}$ in lemma \ref{lemma:SimplifiedAUV}, one finds
\begin{align}
\mathcal{A}
&=\frac12 \KDelta^{3/2}\fnMorawetzSA^{1/2} \pr\left(\frac{\fnMorawetzSA^{1/2}}{\KDelta^{1/2}}\fnMorawetzSB\fnMorawetzSC\right) \nonumber\\
&\geq  \frac12\frac{\KDelta^2}{r^2+a^2} \frac{M}{r^2} \left(1-C\smallparameter\right) .
\label{eq:HardyATerm}
\end{align}

Similarly, 
\begin{align*}
\mathcal{U}
&=-\frac12 \DCurlyRT \fnMorawetzSB \fnMorawetzSC \\
&\geq \left(1-C\smallparameter\right) \frac{1}{3} \frac{(r-\rorbit)^2}{r^3} \modspectralcollection^2 .
\end{align*}
A lower bound for the expectation value of this can be obtained by using the fact that $2M/r$ is bounded above by $1$-minus a small constant and using lemma \ref{lemma:LowerBoundForSpectralQ}. The estimate is 
\begin{align*}
&\int_{\fullspectralspace} \soluSb\mathcal{U}\soluS\diFourSpectral\\
&\geq \left(1-C\smallparameter\right) M\int_{\fullspectralspace} \frac23 \frac{(r-\rorbit)^2}{r^4} \modspectralcollection^2 |\soluS|^2 \diFourSpectral\\
&\geq \left(1-C\smallparameter\right) M\int_{\fullspectralspace} \frac43 \frac{(r-\rorbit)^2}{r^4}  |\soluS|^2 \diFourSpectral \\
&\quad -Ca^2 \int_{\Exterior} \frac{M\KDelta}{(r^2+a^2)^2r^2}|\phipm|^2 \ChiT^2\diFourFI  .
\end{align*}
Since $|\rorbit-3M|\lesssim\smallparameter M$, it follows that $(r-\rorbit)^2 >(r-3M)^2-C\smallparameter^2M^2$,. Hence, 
\begin{align}
\label{eq:HardyUTerm}
&\left(1-C\smallparameter\right) M\int_{\fullspectralspace} \frac43 \frac{(r-\rorbit)^2}{r^4}  |\soluS|^2 \diFourSpectral \\
&\geq \left(1-C\smallparameter\right) \int_{\fullspectralspace} \frac{8Mr^2-48M^2r+72M^3}{6r^4}  |\soluS|^2 \diFourSpectral\nonumber\\
&\quad-C\smallparameter^2 \int_{\fullspectralspace} \frac{M}{r^2}|\soluS|^2 \diFourSpectral  \nonumber.
\end{align}

From the formula for $\mathcal{V}$ in lemma \ref{lemma:SimplifiedAUV}, one finds
\begin{align*}
\mathcal{V}
&=-(\pr\fnMorawetzSA\potlFIz)\fnMorawetzSB\fnMorawetzSC 
-\frac12(\pr\KDelta\pr(\fnMorawetzSA\pr(\fnMorawetzSB\fnMorawetzSC))) .
\end{align*}
Each of these two terms can be approximated by its value at $a=0$. First, the $-\frac12(\pr\KDelta\pr\fnqS)$ term was already computed in \cite{AnderssonBlue} when $a=0=\epsilondtsquared$, so we know
\begin{align*}
\left|-\frac12(\pr\KDelta\pr\fnqS)-\frac{9Mr^2-46M^2r+54M^3}{6r^4}\right|
\lesssim \smallparameter \frac{M}{r^2} .
\end{align*}
Note that there are no spectral parameters on the right, since $\fnqS$ is, when viewed as a function of the spectral parameters, a ratio with a quadratic function of $\epsilondtsquared\spectralt$, $\spectralp$, and $\spectralQ^{1/2}$ in the numerator and $\modspectralcollection^2$ in the denominator. Thus, in terms of the spectral parameters, there is a uniform bound by a constant depending on $r$, $M$, $\epsilondtsquared$, and $a$. 

The treatment of $(\pr \fnMorawetzSA\potlFIz)\fnMorawetzSB\fnMorawetzSC$ is similar. When $a=0$, one finds this expression is
\begin{align*}
\left(-\pr \frac{1}{r^2}\left(1-\frac{2M}{r}\right)\frac{2M}{r}\right)\frac{r^3}{6}\frac{(-2)(r-3M)}{r^4}
&=\frac{2M(3-8Mr^{-1})(1-3Mr^{-1})}{6r^2}\\
&=-\frac{6Mr^2-34M^2r+48M^3}{6r^4} .
\end{align*}
Thus, combining these estimates, one finds
\begin{align}
\label{eq:HardyVTerm}
\left|\mathcal{V}-\frac{3Mr^2-12M^2r+6M^3}{6r^4}\right|
&\lesssim \smallparameter \frac{M}{r^2} .
\end{align}

\Step{Review of Hardy estimate and ODE techniques}
In \cite{AnderssonBlue}, we extended the method from \cite{BlueSoffer:Errata} to prove nonnegativity of expressions of the form
\begin{align}
\int_0^\infty \HardyprCoeff|\partialHardy\soluS|^2 + \HardyPotl|\soluS|^2 \di \HardyvarRed 
\label{eq:HardyIntegral}
\end{align}
by relating this to the existence of positive solutions of an associated ODE. We refer to nonnegativity of the integral \eqref{eq:HardyIntegral} as a Hardy estimate, since we allow $\HardyPotl$ to be negative in some regions. This is related to the problems in this paper and \cite{AnderssonBlue} by taking 
\begin{align*}
\HardyvarRed&=r-\rp ,&
\Hardyd&=2\sqrt{M^2-a^2} ,\\
\HardyprCoeff&= \frac{\KDelta^2}{(r^2+a^2)r^2} ,
\end{align*}
and $\HardyPotl$ is $r^{-4}$ times a quadratic expression in $r$, $M$, and $a$ with coefficients specified below. 

The substitution $\HardysoluRed=\HardyprCoeff^{1/2} \soluS$ transforms the integral \eqref{eq:HardyIntegral} to
\begin{align*}
\int_0^\infty |\partialHardy\HardysoluRed|^2 +\HardyPotlRed|\HardysoluRed|^2 \di\HardyvarRed 
\end{align*}
where $\HardyPotlRed$ and the coefficients $\HardyX$, $\HardyY$, and $\HardyZ$ are defined by
\begin{align*}
\HardyPotlRed
&=\frac{\HardyX \HardyvarRed^2 +\HardyY \HardyvarRed +\HardyZ}{6\HardyvarRed^2 (\HardyvarRed+\Hardyd)^2} 
=\frac{\HardyPotl}{\HardyprCoeff} +\frac12\frac{\partialHardy^2\HardyprCoeff}{\HardyprCoeff} -\frac14\frac{(\partialHardy\HardyprCoeff)^2}{\HardyprCoeff^2} . 
\end{align*}
To prove that the integral  $\int |\partialHardy\HardysoluRed|^2+\HardyPotlRed|\HardysoluRed|^2\di\HardyvarRed$ is nonnegative, it is sufficient to show that the following ODE has a nonnegative solution
\begin{align*}
0&=-\partialHardy^2\HardyODEsolu+\HardyPotlRed\HardyODEsolu .
\end{align*}
As explained in \cite{AnderssonBlue}, a solution of this equation is given by 
\begin{align*}
\HardyODEsolu&= \HardyvarRed^\Hardyalpha(\HardyvarRed+\Hardyd)^{\Hardybeta}F(\Hardya,\Hardyb,\Hardyc;-(r-\Hardyd)/r) ,
\end{align*}
where $F={}_2F_1$ is the Gauss hypergeometric function, and $\Hardyalpha$, $\Hardybeta$, $\Hardya$, $\Hardyb$, and $\Hardyc$ are parameters\footnote{It is unfortunate that $a$ is is almost universally used to denote
both the first parameter of the hypergeometric and the rotation
parameter for a Kerr black hole. We have attempted to reduce the
confusion by using different fonts. It should
be clear from context, which is which.} satisfying
\begin{subequations}
\label{eq:HypergeometricParameterConditions}
\begin{align}
\Hardyalpha(\Hardyalpha-1)\Hardyd^2-\HardyZ/6&=0 ,\\
\Hardyd\left((\HardyX/6)-\Hardyalpha(\Hardyalpha-1)-\Hardybeta(\Hardybeta-1)\right)&=-2\Hardyd\Hardyalpha(\Hardyalpha-1)-\HardyY/6,\\
\Hardyc&=2\Hardyalpha,\\
-\Hardya-\Hardyb-1&=-2(\Hardyalpha+\Hardybeta),\\
-\Hardya\Hardyb&=-\Hardyalpha(\Hardyalpha-1)-2\Hardyalpha\Hardybeta-\Hardybeta(\Hardybeta-1)+\HardyX/6 . 
\end{align}
\end{subequations}
To show that $\HardyODEsolu$ is nonnegative, it is sufficient to
choose the parameters so that $\Hardyalpha$ is non-integer and
\begin{align}
\label{eq:HardyConditions}
\Hardya&<0<\Hardyb<\Hardyc .
\end{align}

\Step{Apply Hardy estimate to the main bulk term using the spectral lower bound on charge-free solutions}
In the analysis of the wave equation in \cite{AnderssonBlue}, we were able to show there are $\mathcal{A}$, $\mathcal{U}$, $\mathcal{V}$ terms, corresponding to coefficients of $(\pr\soluS)^2$, of $|\pt\solu|^2+|\pAng\solu|^2$, and of $|\solu|^2$. For both the wave equation and the Fackerell-Ipser equation, there are regions where the potential $\mathcal{V}$ is negative. For the wave equation, we showed that $\int \mathcal{A}|\pr\solu|^2 +\mathcal{V}|\solu|^2\di r\geq 0$ using the Hardy estimate reviewed in the previous step. 

Unfortunately, for the Fackerel-Ipser equation, $\mathcal{V}$ is so negative that the integral \eqref{eq:HardyIntegral} can fail to be positive with $\HardyprCoeff=M^{-1}\mathcal{A}$ and $\HardyPotl=M^{-1}\mathcal{V}$. This can be seen by substituting the transform of the charged solution for $\soluS$. In this step, we achieve positivity by taking advantage of the $\mathcal{U}$ term and the spectral lower bound on $\modspectralcollection^2$ for charge-free solutions. 

By summing the contributions from $\mathcal{A}$, $\mathcal{U}$, and $\mathcal{V}$ in equations \eqref{eq:HardyATerm}, \eqref{eq:HardyUTerm}, and \eqref{eq:HardyVTerm}, one finds
\begin{align*}
\int_{\fullspectralspace}& \mathcal{A}|\pr\soluS|^2+\soluSb\mathcal{U}\soluS+\mathcal{V}|\soluS|^2 \diFourSpectral\\
\geq& M\int_{\fullspectralspace} \frac{\KDelta^2}{(r^2+a^2)r^2}|\pr\soluS|^2 +\frac{11 r^2 -60M r+78M^2}{6r^4}|\soluS|^2 \diFourSpectral \\
&-CM\smallparameter \int_{\fullspectralspace} \frac{1}{r^2} |\soluS|^2 \diFourSpectral \\
&-Ca^2 \int_{\Exterior} \frac{M\KDelta}{(r^2+a^2)^2r^2}|\phipm|^2 \ChiT^2\diFourFI  .
\end{align*}
Taking 
\begin{align*}
A&=\KDelta^2(r^2+a^2)^{-1}r^{-2},&
\HardyPotl&=(11r^2-60Mr+78M^2)/(6r^4),
\end{align*} 
one can apply the analysis from the previous step. For $a=0$, one finds the following values for the coefficients in the transformed potential $\HardyPotlRed$, 
\begin{align*}
\HardyX&=11, &
\HardyY&=-60M+2\HardyX\rp=-16M, &
\HardyZ&= 78M^2+\HardyY(\rp)+\HardyX\rp^2=2M^2 .
\end{align*}
Taking convenient choices of roots in the equation $\Hardyalpha$ and $\Hardybeta$, one finds the hypergeometric parameters 
\begin{align*}
\Hardyalpha&= \frac12 +\frac13\sqrt{3},\\
\Hardybeta&= \frac12-\frac12\sqrt{22},\\
\Hardya&= -\frac12\sqrt{22}+\frac12-\frac{1}{2}\sqrt{3} \simeq -2.7 ,\\
\Hardyb&= -\frac12\sqrt{22}+\frac12+\frac{7}{6}\sqrt{3} \simeq .18 ,\\
\Hardyc&= 1+\frac23\sqrt{3} \simeq 2.2. 
\end{align*}
These satisfy conditions \eqref{eq:HardyConditions}. The conditions \eqref{eq:HardyConditions} are open conditions, and the parameters $\Hardyalpha$, $\Hardybeta$, $\Hardya$, $\Hardyb$, $\Hardyc$ depend continuously on the rotation parameter $a$ and the coefficients $\HardyX$, $\HardyY$, $\HardyZ$, which depend continuously on the original coefficients in $\HardyPotl$. From this freedom to slightly adjust the coefficient, one can obtain strict positivity instead of mere nonnegativity. Similarly, one can perturb to absorb the contributions involving $\smallparameterwithrfar$. Thus, there is are constants such that
\begin{align*}
\int_{\fullspectralspace}& \mathcal{A}|\pr\soluS|^2+\soluSb\mathcal{U}\soluS+\mathcal{V}|\soluS|^2 \diFourSpectral\\
\geq& C_1\int_{\fullspectralspace} 
\frac{M\KDelta^2}{r^2(r^2+a^2)}|\pr\soluS|^2 
+\frac{(r-\rorbit)^2}{r^3}\modspectralcollection^2|\soluS|^2
+\frac{M}{r^2}|\soluS|^2 \diFourSpectral \\
&\quad- C_2a^2 \int_{\Exterior} \frac{M\KDelta}{(r^2+a^2)^2r^2} |\phipm|^2\ChiT^2 \diFourFI .
\end{align*} 
\end{proof}

\begin{lemma}[Bound on the contribution from the remainder of the Fackerell-Ipser potential in the basic Morawetz estimate]
\label{lemma:BasicMorawetzEnergy}
\hypothesisD,
\hypothesisFsoluS,
\begin{align*}
|\BulkSIm|
&\lesssim \frac{|a|}{M}  \int_{\fullspectralspace} \left(\frac{M\KDelta^2}{(r^2+a^2)r^2}|\pr\soluS|^2 +\frac{M}{r^2}|\soluS|^2\right) \diFourSpectral .
\end{align*}
\end{lemma}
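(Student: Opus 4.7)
The plan is to combine a pointwise smallness estimate for the potential perturbation $\potlFIz - \KSigma\potlFI$ with a uniform-in-$\spectralcollection$ bound on the multiplier $\fnMorawetzS$, and then conclude by weighted Cauchy--Schwarz together with Plancherel's theorem for the spectral transform.

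First I would verify the pointwise bound $|\potlFIz-\KSigma\potlFI|\lesssim M|a|/r^2$. Writing $\KSigma\potlFI=-2M(r^2+a^2\cos^2\theta)/(r-ia\cos\theta)^3$, one sees this agrees with $\potlFIz=-2M/r$ at $a=0$, and the difference is a homogeneous rational function in $r,M,a,a\cos\theta$ vanishing linearly in $a$; the maximal-degree discussion in Section~\ref{s:FurtherGeometryOfKerr} extracts the stated decay. Next I would bound $|\fnMorawetzS|\lesssim \KDelta/(r^2+a^2)$ uniformly in $\spectralcollection$. From the factorisation $\fnMorawetzS=\fnMorawetzSA\fnMorawetzSB\fnMorawetzSC$ one has $|\fnMorawetzSA|\lesssim \KDelta/(r^2+a^2)^2$ and $|\fnMorawetzSB|\lesssim(r^2+a^2)^4/r^3$ directly from the explicit formulae. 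The multiplier $\fnMorawetzSC$ is the only source of spectral dependence: outside the support of $\chimid$ it reduces to $-\pr\potlL$, of size $1/r^3$, while on the support of $\chimid$ the expansion $\DCurlyRT=(\pr\potlL)\modspectralcollection^2+(\text{lower-order in }\spectralcollection)$ used in the proof of lemma~\ref{lemma:PropertiesOfDCurlyRT}, together with the small-parameter bounds on the lower-order coefficients, shows that $\fnMorawetzSCA=-\DCurlyRT/\modspectralcollection^2$ is uniformly bounded in $\spectralcollection$ by an $r$-only function of size $1/r^3$.

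With these two ingredients in hand, I would apply pointwise Cauchy--Schwarz with a weight $\beta=\beta(r)>0$:
\begin{align*}
|\fnMorawetzS\,(\pr\soluSb)\,\ErrorSIm|\leq \tfrac{1}{2}\beta\,|\fnMorawetzS|^2|\pr\soluS|^2+\tfrac{1}{2}\beta^{-1}|\ErrorSIm|^2,
\end{align*}
and then integrate in $\spectralcollection$. Since $\ErrorSIm$ is the spectral transform of $(\potlFIz-\KSigma\potlFI)\soluCut$, Plancherel's theorem in $(t,\phi,\OpQ)$ together with Step~1 yields
\begin{align*}
\int_\spectralspace |\ErrorSIm|^2\,\dispectral=\int_\Reals\int_\Stwo |(\potlFIz-\KSigma\potlFI)\soluCut|^2\,\diomega\,\di t\lesssim \frac{M^2a^2}{r^4}\int_\spectralspace |\soluS|^2\,\dispectral .
\end{align*}
Taking $\beta(r)=|a|(r^2+a^2)/r^2$, the $|\pr\soluS|^2$ contribution matches $(|a|/M)\cdot M\KDelta^2/((r^2+a^2)r^2)|\pr\soluS|^2$ exactly, while the $|\soluS|^2$ contribution is bounded by $M^2|a|/((r^2+a^2)r^2)|\soluS|^2\leq(|a|/M)(M/r^2)|\soluS|^2$ using $r^2+a^2\geq M^2$ throughout the exterior. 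Integrating in $r$ gives the claimed estimate.

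The main obstacle is the uniform-in-$\spectralcollection$ bound on $\fnMorawetzS$ in the middle region: the naive expression $\fnMorawetzSCA=-\DCurlyRT/\modspectralcollection^2$ is not manifestly bounded as $\spectralcollection$ varies, and one must exploit the explicit quadratic structure of $\DCurlyRT$ in $(\epsilondtsquared\spectralt,\spectralp,\spectralQ^{1/2})$ to see that the ratio collapses to a rational function of $r$ alone plus corrections controlled by the small parameters. Once that is in hand, the weight $\beta$ is essentially forced by the requirement of simultaneously matching both main-bulk contributions, and the non-degeneracy $r^2+a^2\geq M^2$ in the exterior guarantees no loss at the horizon.
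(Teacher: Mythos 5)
Your approach matches the paper's proof: the three ingredients (the pointwise bound $|\potlFIz-\KSigma\potlFI|\lesssim |a|M/r^2$, the spectral-parameter-independent bound $|\fnMorawetzS|\lesssim\KDelta/(r^2+a^2)$, and weighted Cauchy--Schwarz followed by Plancherel) are exactly the paper's ingredients, and your choice of weight $\beta=|a|(r^2+a^2)/r^2$ correctly closes the estimate against both pieces of the target integrand. You are also right that the key subtlety is why $\fnMorawetzSCA=-\DCurlyRT\modspectralcollection^{-2}$ is uniformly bounded in $\spectralcollection$; the paper leaves this implicit while you spell it out.

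The one omission: the definition of $\BulkSIm$ in theorem~\ref{thm:EnergyGenerationSpectral} contains a typo — from the derivation of $\pr(\KDelta\GenMomentumS)$, the $\ErrorSIm$ term multiplies $(\fnMorawetzS\,\pr\soluSb+\fnqS\,\soluSb)$, so $\BulkSIm$ should also include $-\int \Re(\fnqS\,\soluSb\,\ErrorSIm)\diFourSpectral$ (and indeed the paper's own proof of this lemma bounds that term as well). Your argument only treats the $\fnMorawetzS(\pr\soluSb)$ contribution. The missing piece is easy to absorb: using $|\fnqS|\lesssim 1/r$ (which follows from the same factorisation you invoked, since $\fnqS=\tfrac12\fnMorawetzSA\pr(\fnMorawetzSB\fnMorawetzSC)$) and Cauchy--Schwarz with weight $\gamma=|a|$ gives
\begin{align*}
\left|\int \fnqS\,\soluSb\,\ErrorSIm\,\diFourSpectral\right|
&\lesssim \int \left(\frac{|a|}{r^2}|\soluS|^2 + \frac{1}{|a|}|\ErrorSIm|^2\right)\diFourSpectral
\lesssim \frac{|a|}{M}\int \frac{M}{r^2}|\soluS|^2\,\diFourSpectral ,
\end{align*}
using $M\leq r$ to absorb the $|a|M^2/r^4$ contribution into $|a|/r^2$. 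With that term included, your proof is complete and coincides with the paper's.
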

\begin{proof}
The functions $\potlFI$ and $\potlFIz$ can be viewed as homogeneous polynomials in $r$, $M$, $a$, $a\cos\theta$. Because of the inclusion of $a\cos\theta$ terms, the previous analysis of rational functions is not entirely valid, although the same ideas apply. Since $\potlFI$ and $\potlFIz$ are rational functions of maximal degree in $r$, coincide when $a=0$, and have order $r^{-1}$, it follows that
\begin{align*}
|\potlFI-\potlFIz|\lesssim \frac{|a|}{M}\frac{M}{r^2} .
\end{align*}
Thus, by inverting the spectral transform, 
\begin{align*}
\int_{\fullspectralspace}|\ErrorSIm|^2\diFourSpectral
&\lesssim \int_{\Exterior} |\potlFI-\potlFIz|^2|\soluCut|^2 \diFourFI\\
&\lesssim \frac{a^2}{M^2} \int_{\Exterior} \frac{M^2}{r^4} |\soluCut|^2 \diFourFI\\
&\lesssim \frac{a^2}{M^2} \int_{\fullspectralspace} \frac{M^2}{r^4} |\soluS|^2 \diFourSpectral.
\end{align*}

From the asymptotics of $\fnMorawetzSA$, $\fnMorawetzSB$, and $\fnMorawetzSC$, one finds
\begin{align*}
|\fnMorawetzS|
&\lesssim |\fnMorawetzSAA||\fnMorawetzSAB||\fnMorawetzSBA||\fnMorawetzSBB||\fnMorawetzSC|\\
&\lesssim \frac{\KDelta}{(r^2+a^2)^2} (1) (r^6)\frac{1}{r}\frac{1}{r^3} \\
&\lesssim \frac{\KDelta}{r^2+a^2} .
\end{align*}
Similarly
\begin{align*}
|\fnqS|
&\lesssim \frac{1}{r} . 
\end{align*}
Thus,
\begin{align*}
\BulkSIm
&=\int_{\fullspectralspace} (\fnMorawetzS(\pr\soluS)+\fnqS\soluS)\ErrorSIm \diFourSpectral\\
|\BulkSIm|
&\lesssim \frac{|a|}{M} \int_{\fullspectralspace}\left(M|\fnMorawetzS|^2|\pr\soluS|^2 +M|\fnqS|^2|\soluS|^2 +\frac{M}{a^2}|\ErrorSIm|^2\right)\diFourSpectral\\
&\lesssim \frac{|a|}{M} \int_{\fullspectralspace} \left(\frac{M\KDelta^2}{(r^2+a^2)^2}|\pr\soluS|^2 +\left(\frac{M}{r^2}+\frac{M^3}{r^4}\right)|\soluS|^2\right) \diFourSpectral 
.
\end{align*}
Since $M\lesssim r$ and $r^2\sim r^2+a^2$, the result follows. 
\end{proof}

\begin{lemma}[Energy bound for the basic spectral estimate]
\hypothesisD,
\hypothesisFsoluS,
then
\begin{align*}
|\EnergyS|
&\lesssim \EnergyCrudeFI(\FinalTime)+\EnergyCrudeFI(0) \\
&\qquad+\smallparameter \int_{\fullspectralspace} \left(\frac{M\KDelta^2}{(r^2+a^2)r^2}|\pr\soluS|^2 +\frac{M}{r^2}|\soluS|^2\right) \diFourSpectral .
\end{align*}
\end{lemma}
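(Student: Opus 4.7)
The plan is to bound $\EnergyS$ by Cauchy--Schwarz together with Young's inequality, separating one piece that gets absorbed into the $\smallparameter$-weighted bulk term on the right-hand side and a second piece, involving $\int|\ErrorSCut|^2$, that is controlled by the crude energies at $t=0$ and $t=\FinalTime$. Concretely, starting from
\[
\EnergyS=\Re\int_{\fullspectralspace}\Re\big((\fnMorawetzS\pr\soluSb+\fnqS\soluSb)\ErrorSCut\big)\diFourSpectral,
\]
the pointwise inequality $2|AB|\le\lambda|A|^2+\lambda^{-1}|B|^2$ gives
\[
|\EnergyS|\le\lambda\!\int_{\fullspectralspace}(|\fnMorawetzS|^2|\pr\soluS|^2+|\fnqS|^2|\soluS|^2)\diFourSpectral+\lambda^{-1}\!\int_{\fullspectralspace}|\ErrorSCut|^2\diFourSpectral.
\]

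Using the asymptotic bounds $|\fnMorawetzS|\lesssim\KDelta/(r^2+a^2)$ and $|\fnqS|\lesssim 1/r$ established in the proof of lemma \ref{lemma:BasicMorawetzEnergy}, one verifies that the first integral is bounded by a constant multiple of $(\lambda/M)\int_{\fullspectralspace}\bigl(\frac{M\KDelta^2}{(r^2+a^2)r^2}|\pr\soluS|^2+\frac{M}{r^2}|\soluS|^2\bigr)\diFourSpectral$ (the worst ratio being controlled uniformly in $r\in(\rp,\infty)$). Choosing $\lambda=c\,\smallparameter\cdot M$ for $c$ small absorbs this term into the desired $\smallparameter$-weighted bulk.

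It remains to handle $\lambda^{-1}\!\int|\ErrorSCut|^2\diFourSpectral$. By Plancherel (applied to the $(t,\phi)$-Fourier transform and the $\OpQ$-spectral decomposition, all of which are unitary on $L^2(\diFourFI)$), this equals a constant times
\[
(\smallparameter M)^{-1}\int_{\Exterior}\bigl|\KSigma\bigl((\Box_g\ChiT)\solu+2(\nabla^\alpha\ChiT)(\nabla_\alpha\solu)\bigr)\bigr|^2\diFourFI.
\]
Since $\ChiT$ depends only on $t$, one has $\Box_g\ChiT=g^{tt}\pt^2\ChiT$ and $\nabla^\alpha\ChiT\nabla_\alpha\solu=g^{t\beta}(\pt\ChiT)\partial_\beta\solu$, so all terms involve $\pt\ChiT$ or $\pt^2\ChiT$, which are supported in the union $[-M,0]\cup[\FinalTime,\FinalTime+M]$ and bounded by $M^{-1}$ and $M^{-2}$ respectively. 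The integrand is then pointwise dominated by the crude-energy density: the algebraic identities $\KSigma g^{tt}=-\KPi/\KDelta$ and $\KSigma g^{t\phi}=-2aMr/\KDelta$ combine with the weights $M^{-2}$ from $\pt\ChiT$ to produce exactly the coefficients $\frac{r^2+a^2}{\KDelta}M^{-2}|\pt\solu|^2$, $(r^2+a^2)M^{-2}|\pt\solu|^2$, etc.\ that appear in $\EnergyCrudeFI$, up to an overall factor of $M^{-2}$. Integrating over a time interval of length $M$ and applying corollary \ref{cor:TrivialEnergyBound} to propagate from $t=0$ or $t=\FinalTime$ (changing the energy by only a bounded factor $e^{O(|a|/M)}$), one obtains $\int|\ErrorSCut|^2\diFourSpectral\lesssim M^{-1}\bigl(\EnergyCrudeFI(0)+\EnergyCrudeFI(\FinalTime)\bigr)$. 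Multiplying by $\lambda^{-1}\sim(\smallparameter M)^{-1}$ and combining with the bulk piece yields the stated bound (since $\smallparameter^{-1}$ disappears when the final inequality is compared termwise).

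The main obstacle is the weight-matching in the physical-space integral: the factor $\KSigma$ multiplying the cut-off commutator produces coefficients $\KPi/\KDelta$ that are singular as $r\to\rp$ and grow like $r^2$ as $r\to\infty$, so controlling the resulting integrand by the crude-energy density requires careful bookkeeping of the cancellations between $\KPi/\KDelta$ and the weights $\KDelta^{-1}(r^2+a^2)$, $r^{-2}$ appearing in $\EnergyCrudeFI$. Finite speed of propagation (and compact support of regular initial data at $t=0$) plays a supporting role in ensuring all integrals converge.
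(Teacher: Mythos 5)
Your proposal contains a genuine gap, and the approach as written cannot work: the core difficulty that the paper's proof is built around is precisely the one your Cauchy--Schwarz argument fails to resolve.

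The problem is twofold. First, a pure Young's inequality split with a single parameter $\lambda$ cannot yield the stated inequality. If you take $\lambda\sim\smallparameter M$ so that $\lambda\int|\fnMorawetzS|^2|\pr\soluS|^2\diFourSpectral$ sits inside $\smallparameter\int\frac{M\KDelta^2}{(r^2+a^2)r^2}|\pr\soluS|^2\diFourSpectral$, then the complementary piece is $\lambda^{-1}\int|\ErrorSCut|^2\diFourSpectral\sim(\smallparameter M)^{-1}\int|\ErrorSCut|^2\diFourSpectral$. The factor $\smallparameter^{-1}$ does not ``disappear when the final inequality is compared termwise''; it remains on the energy term, and it is precisely this large, parameter-dependent constant that the bootstrap in lemma \ref{lemma:BasicSpectralMorawetzFI} cannot tolerate (there one must absorb a $\smallparameter$-weighted bulk into the left-hand side, so the right-hand side must not carry compensating growing factors).

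Second, even ignoring the $\smallparameter^{-1}$, the unweighted $\int|\ErrorSCut|^2\diFourSpectral$ is not controlled by $\EnergyCrudeFI(0)+\EnergyCrudeFI(\FinalTime)$. After Plancherel, $|\ErrorSCut|^2$ carries the factor $(\KPi/\KDelta)^2$ (coming from $\KSigma g^{tt}=-\KPi/\KDelta$), so the term involving $|\vecTperp\solu|^2$ has weight $\sim\frac{\KPi^2}{\KDelta^2}M^{-2}$, whereas the corresponding term in $\EnergyCrudeFI$ has weight $\sim\frac{(r^2+a^2)r^2}{\KDelta}$; the ratio grows like $\KDelta^{-1}$ as $r\to\rp$ and like $r^4$ as $r\to\infty$. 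You flag this issue yourself at the end, but the asserted ``cancellations'' do not occur in the naive split.

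The missing ingredient is the spectral-independent approximation $(\fnMorawetzSApprox,\fnqSApprox)$. The paper splits $\EnergyS$ into the contribution of this approximation plus a remainder. For the remainder, the bound $|\fnMorawetzS-\fnMorawetzSApprox|\lesssim\smallparameter(\KDelta/(r^2+a^2))^2 M^2/r^2$ supplies \emph{both} the $\smallparameter$ smallness \emph{and} the extra radial decay: the resulting Cauchy--Schwarz partner is $\int\frac{M^3}{r^4}(\frac{\KDelta}{r^2+a^2})^2|\ErrorSCut|^2\diFourSpectral$, whose weights decay at $\rp$ and $\infty$ exactly enough to cancel $(\KPi/\KDelta)^2$ and make the integral controllable by $\EnergyCrudeFI$. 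For the approximation piece one can invert the spectral transform, exploit the time-localization of $\pt\ChiT$ and $\pt^2\ChiT$ directly (giving $\EnergyCrudeFI(0)+\EnergyCrudeFI(\FinalTime)$ with no $\smallparameter^{-1}$), and, for the $\solu\,\pt^2\ChiT$ term, integrate by parts in $r$ and observe that the boundary flux at $\bif$ vanishes because $\int_\Reals\pt^2\ChiT\,\di t=0$. Without this approximation step, neither the $\smallparameter$ factor nor the convergence of the error integral can be obtained.
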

\begin{proof}
Because $\ErrorSCut$ is supported in $t\in[-M,0]\cup[\FinalTime,\FinalTime+M]$, one might expect that $|\EnergyS| \lesssim \sum_{t\in[-M,0]\cup[\FinalTime,\FinalTime+M]} \EnergyCrudeFI(t)$. However, because $\vecMorawetzBasic$ depends on $\spectralcollection$, one cannot localise in $t$. For this reason, we approximate $\vecMorawetzBasic$ by a $\spectralcollection$-independent vector field. This allows us to localise one part in $t$ and to gain a factor of $|a|/M$ in the remainder. 

Later in this argument, it will be useful to have the estimate
\begin{align*}
\int_{\fullspectralspace} &\frac{M^3}{r^4}\left(\frac{\KDelta}{r^2+a^2}\right)^2 |\ErrorSCut|^2 \diFourSpectral \\
&=\int_{\Exterior} \frac{M^3}{r^4}\left(\frac{\KDelta}{r^2+a^2}\right)^2 |2(\pt\solu)(\pt\ChiT)+\solu(\pt^2\ChiT)|^2 \left(\frac{\KPi}{\KDelta}\right)^2\diFourFI\\
&\lesssim \frac{1}{M} \int_{\supp \pt\ChiT} \left(|\pt\solu|^2 +\frac{1}{r^2}|\solu|^2\right) r^2 \diFourFI \\
&\lesssim \sup_{t\in[-M,0]\cup[\FinalTime,\FinalTime+M]} \EnergyCrudeFI(t) \\
&\lesssim \EnergyCrudeFI(\FinalTime)+\EnergyCrudeFI(0).
\end{align*} 
The last estimate follows from the trivial energy estimate corollary \ref{cor:TrivialEnergyBound}.

Let
\begin{align*}
\fnMorawetzSApprox&=\frac{\KDelta}{(r^2+a^2)^2} \frac{r^5}{6} (\pr\potlL) ,\\
\fnqSApprox&=\frac12\frac{\KDelta}{(r^2+a^2)^2}\pr\left(\frac{r^5}{6}(\pr\potlL)\right) .
\end{align*}
Outside the support of $\chimid$, the approximators $(\fnMorawetzSApprox,\fnqSApprox)$ are exactly equal to $(\fnMorawetzS,\fnqS)$. Inside the support of $\chimid$, the differences between the coefficients of $\epsilondtsquared^2\spectralt^2\modspectralcollection^{-2}$, $\spectralt\spectralp\modspectralcollection^{-2}$, $\spectralp^2\modspectralcollection^{-2}$, and $\spectralQ\modspectralcollection^{-2}$ all have a coefficient of $\smallparameter$. Thus, 
\begin{align*}
|\fnMorawetzS-\fnMorawetzSApprox|
&\lesssim \smallparameter \left(\frac{\KDelta}{r^2+a^2}\right)^2\frac{M^2}{r^2} ,\\
|\fnqS-\fnqSApprox|
&\lesssim \smallparameter \frac{\KDelta}{r^2+a^2}\frac{M^2}{r^3} .
\end{align*}
From this, one can estimate the error from using $(\fnMorawetzSApprox,\fnqSApprox)$ to approximate $(\fnMorawetzS,\fnqS)$ in $\ErrorSCut$. The estimate is
\begin{align*}
&\left|\EnergyS -\int_{\fullspectralspace}(\fnMorawetzSApprox(\pr\soluSb)+\fnqSApprox\soluSb)\ErrorSCut \diFourSpectral\right|\\
&\lesssim \smallparameter \int_{\fullspectralspace} \left(\frac{M\KDelta^2}{(r^2+a^2)^2}|\pr\soluS|^2 +\frac{M}{r^2}|\soluS|^2 +\frac{M^3|\ErrorSCut|^2}{r^4}\left(\frac{\KDelta}{r^2+a^2}\right)^2\right) \diFourSpectral 
.
\end{align*}
The term involving $|\ErrorSCut|^2$ is bounded by $\EnergyCrudeFI(\FinalTime)+\EnergyCrudeFI(0)$.

One can estimate the integral of the approximation by inverting the spectral transform
\begin{align*}
&\left|\int_{\fullspectralspace}(\fnMorawetzSApprox(\pr\soluSb)+\fnqSApprox\soluS)\ErrorSCut \diFourSpectral\right|\\
&\lesssim \left|\int_{\Exterior} (\fnMorawetzSApprox(\pr\soluCutb)+\fnqSApprox\soluCutb)\left(2(\vecTperp\solu)\frac{\KPi}{\KDelta}(\pt\ChiT)+\solu\frac{\KPi}{\KDelta}(\pt^2\ChiT)\right) \diFourFI\right| .
\end{align*}
All of these have a factor where at least one derivative is applied to $\ChiT$, so that they are supported near $t=0$ and $t=\FinalTime$. The integrand is the product of two factors, each of which is the sum of two terms. The terms arising from the first term in the second factor are
\begin{align*}
\left|\fnMorawetzSApprox(\pr\soluCutb)2(\vecTperp\solu)\frac{\KPi}{\KDelta}(\pt\ChiT)\right|
&\lesssim |\pr\solub||\vecTperp\solu|\frac{\KPi}{r^2+a^2}(\pt\ChiT)\\
&\lesssim \left(\frac{r^2+a^2}{\KDelta}|\vecTperp\solu|^2+\frac{\KDelta}{r^2+a^2}|\pr\solu|^2\right)r^2 (\pt\ChiT) ,\\
\left|\fnqSApprox(\soluCutb)2(\vecTperp\solu)\frac{\KPi}{\KDelta}(\pt\ChiT)\right|
&\lesssim \left( |\vecTperp\solu|^2 +\frac{1}{r^2}|\solu|^2\right)r^2(\pt\ChiT) .
\end{align*}
Thus,
\begin{align*}
\int_{\Exterior}&\left|(\fnMorawetzSApprox\pr\soluSb+\fnqSApprox\soluSb)2((\vecTperp\solu)\frac{\KPi}{\KDelta}(\pt\ChiT)\right|\diFourFI\\
&\lesssim 2M\sup_{t\in[-M,0]\cup[\FinalTime,\FinalTime+M]} \frac{\EnergyCrudeFI(t)}{M} \\
&\lesssim \EnergyCrudeFI(\FinalTime)+\EnergyCrudeFI(0). 
\end{align*}
The remaining two terms must be treated together
\begin{align*}
&\Re\left((\fnMorawetzSApprox(\pr\soluCutb)+\fnqSApprox\soluCutb)\frac{\KPi}{\KDelta}\solu(\pt^2\ChiT)\right)\\
&=\frac{\KPi}{\KDelta}\frac{\KDelta}{(r^2+a^2)^2}\left( \frac{r^5}{6}(\pr\potlL)\Re((\pr\solub)\solu)+\frac12\pr\left(\frac{r^5}{6}(\pr\potlL)\right)|\solu|^2\right)(\pt^2\ChiT) \\
&=\frac{\KPi}{(r^2+a^2)^2}\pr\left(\frac{r^5}{12}(\pr\potlL)|\solu|^2 \right)(\pt^2\ChiT) .
\end{align*}
Since $\KPi(r^2+a^2)^{-2}$ is a homogeneous rational function of order $0$ in $r$, its derivative is a homogeneous rational function of order at most $-1$, which decays at worst like $Mr^{-2}$. We now integrate this, in a purely coordinate based formalism, over the region $(t,r,\theta,\phi)\in\Reals\times(\rp,\infty)\times\Stwo$ and apply integration by parts in $r$ to find
\begin{align*}
\int_{\Exterior} &\Re\left((\fnMorawetzSApprox(\pr\soluCutb)+\fnqSApprox\soluCutb)\frac{\KPi}{\KDelta}\solu(\pt^2\ChiT)\right)\diFourFI\\
&=\int_{\Exterior} \pr\left(\frac{\KPi}{(r^2+a^2)^2}\right)\left(\frac{r^5}{12}(\pr\potlL)|\solu|^2 \right)(\pt^2\ChiT) \diFourFI \\
&\qquad +\int_{\bif} \left(\frac{\KPi}{(r^2+a^2)^2}\right)\left(\frac{r^5}{12}(\pr\potlL)|\solu|^2 \right)(\pt^2\ChiT) \di t \diomega .
\end{align*}
The first of these integrals is bounded in absolute value by $\EnergyCrudeFI(\FinalTime)+\EnergyCrudeFI(0)$ because $\pt^2\ChiT$ is supported in $[-M,0]\cup[\FinalTime,\FinalTime+M]$. One should expect that the integral over $\bif$ is identically zero, since, in the closure of the Kerr spacetime, $\bif$ is a $2$-surface, not a $3$-hypersurface. However, the function $\ChiT$ does not extend even continuously to the closure of the Kerr space-time. In particular, it has no limit on $\bif$. Thus, some care must be taken in evaluating this integral. Since $\solu$ is, by assumption, continuous on the closure of the exterior of the Kerr space-time, it follows that $\lim_{r\rightarrow\rp}\solu(t,r,\theta,\phi)$ is independent of $t$. Thus, the only $t$ dependence in the integral of $\bif$ is through the function $\pt^2\ChiT$. Since $\int_\Reals \pt^2\ChiT \di t =0$, it follows that the integral over $\bif$ is indeed zero as the naive intuition would suggest. This completes the lemma. 
\end{proof}

\begin{lemma}[Basic spectral Morawetz estimate]
\label{lemma:BasicSpectralMorawetzFI}
\hypothesisD,
\hypothesisFsoluSNoCharge,
then 
\begin{align*}
\int_{\fullspectralspace}
&\left( \frac{M\KDelta^2}{(r^2+a^2)^2}|\pr\soluS|^2
+\frac{(r-\rorbit)^2}{r^3}\modspectralcollection^2|\soluS|^2
+\frac{M}{r^2}|\soluS|^2
\right)\diFourSpectral\\
&\lesssim \EnergyCrudeFI(\FinalTime)+\EnergyCrudeFI(0)
+Ca^2\int_{\Exterior} \frac{M\KDelta}{(r^2+a^2)^2r^2}|\phipm|^2\ChiT^2 \diFourFI .
\end{align*}
\end{lemma}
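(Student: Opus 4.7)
The plan is to assemble this estimate from the three preceding results by bookkeeping, with no new analytical input. Theorem \ref{thm:EnergyGenerationSpectral}, applied with the pair $(\fnMorawetzS,\fnqS)$ constructed in \eqref{eq:DefGenMorawetz}--\eqref{eq:DefBasicMorawetz}, gives the identity $\BulkSMain = \EnergyS - \BulkSIm$. I would substitute the coercive lower bound on $\BulkSMain$ from lemma \ref{lemma:BulkMainMorawetzFI} on the left, and the upper bounds on $|\BulkSIm|$ and $|\EnergyS|$ from lemma \ref{lemma:BasicMorawetzEnergy} and the preceding energy-bound lemma on the right.

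Denoting by $\mathcal{B}$ the three-term positive integrand on the left-hand side of the claimed estimate, this produces a schematic inequality of the form
\begin{align*}
C_1 \mathcal{B} - C_2 a^2 \int_{\Exterior} \frac{M\KDelta}{(r^2+a^2)^2 r^2}|\phipm|^2\ChiT^2 \diFourFI
\leq \EnergyCrudeFI(\FinalTime) + \EnergyCrudeFI(0) + C_3\left(\frac{|a|}{M}+\smallparameter\right) \mathcal{B}.
\end{align*}
The $|\pr\soluS|^2$ weight $\frac{M\KDelta^2}{r^2(r^2+a^2)}$ appearing in lemma \ref{lemma:BulkMainMorawetzFI} and the weight $\frac{M\KDelta^2}{(r^2+a^2)^2}$ appearing in the statement and in the upper bounds are $a$ almost equal, so the coefficients on the left and the absorbed coefficients on the right are compatible.

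Under hypothesis \ref{def:epsilondtsquaredalsosmall}, both $\frac{|a|}{M}$ and $\smallparameter$ can be made as small as needed, so for such a choice the $C_3\bigl(\frac{|a|}{M}+\smallparameter\bigr) \mathcal{B}$ term on the right can be absorbed into $C_1 \mathcal{B}$ on the left; moving the $a^2$ charge term to the right then produces the stated inequality. The main obstacle has in fact already been paid for in lemma \ref{lemma:BulkMainMorawetzFI}: it was there that the charge-free hypothesis entered (via lemma \ref{lemma:LowerBoundForSpectralQ}) to convert the spectral lower bound $\modspectralcollection^{2} \gtrsim r^{-2}$ into the zeroth-order coercive term $\frac{M}{r^2}|\soluS|^2$, offsetting the sign-indefinite potential of the Fackerell-Ipser equation via the Hardy/hypergeometric argument. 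At the present step the only thing left to verify is the smallness bookkeeping above.
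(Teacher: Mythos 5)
Your proposal is correct and matches the paper's own proof, which is equally terse: it simply combines the energy generation identity $\EnergyS=\BulkSMain+\BulkSIm$ with the lower bound on $\BulkSMain$, the upper bounds on $|\BulkSIm|$ and $|\EnergyS|$, and absorbs the small factors. The only slip is cosmetic: the upper bounds in lemma \ref{lemma:BasicMorawetzEnergy} and the energy-bound lemma carry the weight $M\KDelta^2/((r^2+a^2)r^2)$, not $M\KDelta^2/(r^2+a^2)^2$, but since $r^2\sim r^2+a^2$ these are $a$ almost equal and the absorption goes through exactly as you describe.
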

\begin{proof}
This follows from combining the estimates in this subsection with the energy generation formula for the spectral transform and taking $\smallparameter$ sufficiently small. 
\end{proof}

\begin{prop}[Basic Morawetz estimate, core estimate \eqref{eq:CoreIV}]
\label{prop:BasicMorawetzFI}
\hypothesisE,
\hypothesisFsoluSNoCharge,
then
\begin{align*}
\BulkFILowerOrder+\BulkFITwo
&\lesssim \EnergyCrudeFI(\FinalTime)+\EnergyCrudeFI(0)
+C\frac{|a|}{M}\BulkMorawetzF .
\end{align*}
\end{prop}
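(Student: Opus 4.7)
\textbf{Proof plan for core estimate \eqref{eq:CoreIV}.}
The strategy is to apply Plancherel's theorem to the basic spectral Morawetz estimate (lemma \ref{lemma:BasicSpectralMorawetzFI}) to convert its spectral integrals into physical-space integrals in $\soluCut = \solu\ChiT$, then identify each term with the appropriate piece of $\BulkFILowerOrder$ and $\BulkFITwo$ restricted to $[0,\FinalTime]$ (where $\ChiT=1$ and hence $\soluCut=\solu$), absorbing the contributions from $t\in[-M,0]\cup[\FinalTime,\FinalTime+M]$ into the energy via the trivial bound of corollary \ref{cor:TrivialEnergyBound}. At the outset we invoke the freedom in hypothesis E (\textit{cf.} definition \ref{def:chooseprojection}) to fix $\epsilondtsquared$ as a definite positive multiple of $M$, say $\epsilondtsquared = c_0 M$ with $c_0$ small but only depending on $\rfar$; this ensures $\epsilondtsquared/M$ and $|a|/\epsilondtsquared$ are both as small as needed by the spectral lemma, while also making $\epsilondtsquared^2$ comparable to $M^2$.

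\textbf{Step 1 (Plancherel).} For each fixed $r$, the spectral transform is an $L^2$-isometry, so
\begin{align*}
\int\tfrac{M}{r^2}|\soluS|^2 \diFourSpectral &= \int\tfrac{M}{r^2}|\soluCut|^2 \diFourFI,\\
\int\tfrac{M\KDelta^2}{(r^2+a^2)^2}|\pr\soluS|^2 \diFourSpectral &= \int\tfrac{M\KDelta^2}{(r^2+a^2)^2}|\pr\soluCut|^2 \diFourFI,\\
\int \modspectralcollection^2 |\soluS|^2\diFourSpectral &= \int\bigl(\epsilondtsquared^2|\pt\soluCut|^2 + |\pp\soluCut|^2 + \bar{\soluCut}(-\OpQ)\soluCut\bigr)\diFourFI,
\end{align*}
and integration by parts in $\theta,\phi$ in the last identity produces a nonnegative quadratic form dominating $|\pAng\soluCut|^2 + \epsilondtsquared^2 |\pt\soluCut|^2$ up to an $a^2$-small remainder.

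\textbf{Step 2 (recognise $\BulkFILowerOrder$ and the $\pr$-piece of $\BulkFITwo$).} Since $|\soluCut|\leq|\solu|$ and $\pr\ChiT=0$, restricting the $t$-integration to $[0,\FinalTime]$ in the first two identities recovers $\BulkFILowerOrder$ and the $\frac{M\KDelta^2}{(r^2+a^2)r^2}|\pr\solu|^2$ part of $\BulkFITwo$ (the latter using that $\frac{M\KDelta^2}{(r^2+a^2)^2}$ and $\frac{M\KDelta^2}{(r^2+a^2)r^2}$ are equivalent up to the bounded factor $(r^2+a^2)/r^2$). The contributions from $t\in[-M,0]\cup[\FinalTime,\FinalTime+M]$, where $|\pt^k\ChiT|\lesssim M^{-k}$, are bounded by $\sup_{t\in\text{supp}\,\pt\ChiT}\EnergyCrudeFI(t)$, which by corollary \ref{cor:TrivialEnergyBound} is $\lesssim \EnergyCrudeFI(0)+\EnergyCrudeFI(\FinalTime)$.

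\textbf{Step 3 (recognise the $\pt,\pAng$ piece of $\BulkFITwo$).} By lemma \ref{lemma:PropertiesOfDCurlyRT}, for $|a|$, $\epsilondtsquared$ sufficiently small one has $|\rorbit-3M|\leq \rfar/4$, so on $\supp\chirfar$ the inequality $|r-\rorbit|\geq\rfar/2$ holds. A separate check at bounded $r$ (where $\chirfar/r\lesssim M^{-1}$ and $(r-\rorbit)^2/r^3\gtrsim \rfar^2 M^{-3}$) and at large $r$ (where $(r-\rorbit)^2/r^3\sim 1/r$) yields the pointwise comparison
\begin{align*}
\frac{\chirfar}{r} \lesssim_{\rfar} \frac{(r-\rorbit)^2}{r^3}\qquad\text{on }(\rp,\infty).
\end{align*}
Combining this with $\modspectralcollection^2\geq\spectralp^2+\spectralQ$ (controlling $|\pAng\soluCut|^2$ after Plancherel and IBP) and $\modspectralcollection^2\geq\epsilondtsquared^2\spectralt^2 \sim c_0^2 M^2\spectralt^2$ (controlling $M^2|\pt\soluCut|^2$ with constant $c_0^{-2}$), we dominate the $\chirfar(M^2|\pt\solu|^2+|\pAng\solu|^2)/r$ part of $\BulkFITwo$ by the $(r-\rorbit)^2 r^{-3}\modspectralcollection^2|\soluS|^2$ spectral integral, up to cutoff errors already absorbed in Step 2.

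\textbf{Step 4 (RHS error and conclusion).} The right-hand side of lemma \ref{lemma:BasicSpectralMorawetzFI} contains $Ca^2\int\frac{M\KDelta}{(r^2+a^2)^2r^2}|\phipm|^2\ChiT^2\diFourFI$. Since $a^2/r^2\leq(|a|/M)^2$ on $r\geq\rp\sim M$ and $\KSigma\sim r^2+a^2$, this term is bounded by $(|a|/M)^2\BulkMorawetzF\leq(|a|/M)\BulkMorawetzF$. Assembling Steps 1--4 produces $\BulkFILowerOrder+\BulkFITwo\lesssim \EnergyCrudeFI(0)+\EnergyCrudeFI(\FinalTime)+\tfrac{|a|}{M}\BulkMorawetzF$, as desired.

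\textbf{Main obstacle.} The essential point is matching the target coefficient $M^2$ of $|\pt\solu|^2$ in $\BulkFITwo$ with the spectral estimate's smaller coefficient $\epsilondtsquared^2$. Resolving this requires using the freedom granted by hypothesis E (unlike hypothesis D) to set $\epsilondtsquared = c_0 M$ with a \emph{fixed} positive $c_0$; the resulting constants then depend on $\rfar$ (through both $c_0$ and the comparison $\chirfar/r\lesssim_{\rfar}(r-\rorbit)^2/r^3$), but this is exactly the form of dependence allowed by the conclusion.
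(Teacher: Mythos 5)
Your proof is correct in structure and takes essentially the same route as the paper: invert the spectral transform (Plancherel plus integration by parts in $t$, $\theta$, $\phi$), use the pointwise bound $|(r-\rorbit)/r| \gtrsim_{\rfar} \chirfar$ from lemma \ref{lemma:PropertiesOfDCurlyRT} to cover the weights in $\BulkFITwo$, restrict the $t$-integration to $[0,\FinalTime]$ using nonnegativity, and fix $\epsilondtsquared/M$ and $\rfar/M$ as constants so that $\epsilondtsquared^2\spectralt^2$ dominates $M^2\spectralt^2$ and $|a|/\epsilondtsquared$ is controlled by $|a|/M$. That is precisely the paper's argument.

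The one imprecision is in Step 4. The error term in lemma \ref{lemma:BasicSpectralMorawetzFI} carries the factor $\ChiT^2$, which is supported on $t\in[-M,\FinalTime+M]$, whereas $\BulkMorawetzF$ integrates only over $[0,\FinalTime]$. Your assertion that the error term ``is bounded by $(|a|/M)^2\BulkMorawetzF$'' therefore ignores the contributions from the tail intervals $[-M,0]$ and $[\FinalTime,\FinalTime+M]$. The paper handles this by splitting the $t$-integration and bounding the tails by $\frac{|a|}{M}\EnergyCrudeF(0)$ and $\frac{|a|}{M}\EnergyCrudeF(\FinalTime)$. (These $\EnergyCrudeF$ terms appear on the right side of core estimate \eqref{eq:CoreIV} as stated in the introduction, even though the proposition statement itself elides them.) You should either split the time intervals as the paper does and record the extra $\EnergyCrudeF$ terms, or explicitly observe that they are harmless for the bootstrap because they already appear in \eqref{eq:CoreIV}.
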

\begin{proof}
The essential strategy is to invert the spectral transform in the previous lemma. To start, observe that since $|\rorbit-3M|\lesssim\smallparameter M$, for any choice of $\rfar$, there is a constant $C$ such that for sufficiently small $\smallparameter$, 
\begin{align*}
\left|\frac{r-\rorbit}{r}\right|\geq C\chirfar .
\end{align*}
From the previous lemma and the lower bound involving $\chirfar$, one has
\begin{align*}
\int_{\fullspectralspace}
&\left( \frac{M\KDelta^2}{(r^2+a^2)^2}|\pr\soluS|^2
+\frac{1}{r}\chirfar\modspectralcollection^2|\soluS|^2
+\frac{M}{r^2}|\soluS|^2
\right)\diFourSpectral\\
&\lesssim \EnergyCrudeFI(\FinalTime)+\EnergyCrudeFI(0)
+Ca^2\int_{\Exterior} \frac{\KDelta}{(r^2+a^2)r^2}|\phipm|^2\ChiT^2 \diFourFI .
\end{align*}
Inverting the spectral transform, one finds
\begin{align*}
\int_{\Exterior}
&\left( \frac{M\KDelta^2}{(r^2+a^2)^2}|\pr\soluCut|^2
+\frac{1}{r}\chirfar\soluCutb(-\epsilondtsquared^2\pt^2-\pp^2-\OpQ)\soluCut
+\frac{M}{r^2}|\soluS|^2
\right)\diFourSpectral\\
&\lesssim \EnergyCrudeFI(\FinalTime)+\EnergyCrudeFI(0)
+Ca^2\int_{\Exterior} \frac{\KDelta}{(r^2+a^2)r^2}|\phipm|^2\ChiT^2 \diFourFI .
\end{align*}
Applying integration by parts in the time and angular variables and using the compact support of $\soluCut$ in these variables, one finds
\begin{align*}
\int_{\Exterior}
&\left( \frac{M\KDelta^2}{(r^2+a^2)^2}|\pr\soluCut|^2
+\frac{1}{r}\chirfar(\epsilondtsquared^2|\pt\soluCut|+|\pAng\soluCut|^2)
+\frac{M}{r^2}|\soluS|^2
\right)\diFourSpectral\\
&\lesssim \EnergyCrudeFI(\FinalTime)+\EnergyCrudeFI(0)
+Ca^2\int_{\Exterior} \frac{M\KDelta}{(r^2+a^2)^2r^2}|\phipm|^2\ChiT^2 \diFourFI .
\end{align*}
Since the integrand on the left is nonnegative, it is only reduced by restricting to the interval $[0,\FinalTime]$. The values of $\epsilondtsquared M^{-1}$ and $\rfar M^{-1}$ can now be treated as fixed constants. With these fixed, $|a|\epsilondtsquared^{-1}$ can be given an upper bound in terms of $|a|M^{-1}$. 

Finally, the integral involving $|\phipm|^2$ can be estimated by $|a|M^{-1}\BulkMorawetzF$ for $t\in[0,\FinalTime]$, and by $|a|M^{-1}\EnergyCrudeF(0)$ and $|a|M^{-1}\EnergyCrudeF(\FinalTime)$ for $t\in[-M,0]$ and $[\FinalTime,\FinalTime+M]$ respectively. 
\end{proof}

\subsection{The refined Morawetz estimate for the Fackerell-Ipser equation}
The purpose of this subsection is to prove core estimate \eqref{eq:CoreV} following the ideas in \cite{BlueSoffer:LongPaper,AnderssonBlueNicolas}. Refined Morawetz estimates also appear in \cite{TataruTohaneanu}. In this section, it is convenient to imagine $\epsilondtsquared/M$ is as in proposition \ref{prop:BasicMorawetzFI}, but $\epsilondtsquared/M$ can be chosen independently. Furthermore, we take $\fnMorawetzSA=\potlL(1-\epsilondtsquared^2\potlL)$ as in the previous subsection, $\fnMorawetzSB=1$, and
\begin{align*}
\fnMorawetzSC&=M^2 \modspectralcollection^{\ExponentRefined} \arctan(\rescaledr) \chiTworfar 
\end{align*}
$\chiTworfar$ is a smooth function with support in $|r-3M|\leq 3\rfar$, identically one in $|r-3M|\leq2\rfar$, monotone in the intervals between, and for all $k\in\Naturals: \pr^k\chiTworfar\lesssim\rfar^{-k}$. The estimates in this subsection are proved independently of those in the previous subsection. 

\begin{lemma}[Bound the main bulk term for the refined Morawetz estimate]
\hypothesisF,
\hypothesisFsoluS,
then
\begin{align*}
\BulkSMain
&\geq \int_{\fullspectralspace} M^{-1}\modspectralcollection^{\ExponentRefinedThree} |\soluS|^2 \diFourSpectral\\
&\qquad-C\left(\EnergyCrudeFI(\FinalTime)+\EnergyCrudeFI(0) +\BulkFILowerOrder+\BulkFITwo\right) .
\end{align*}
\end{lemma}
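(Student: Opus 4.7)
The plan is to compute $\BulkSMain$ by plugging the refined choices $\fnMorawetzSA=\potlL(1-\epsilondtsquared^2\potlL)$, $\fnMorawetzSB=1$, and $\fnMorawetzSC=M^2\modspectralcollection^{\ExponentRefined}\arctan(\rescaledr)\chiTworfar$ into the formulas of lemma \ref{lemma:SimplifiedAUV}, and to show that $\mathcal{U}|\soluS|^2$ supplies the bulk of the desired lower bound, while $\mathcal{A}|\pr\soluS|^2$ handles the degeneracy at $r=\rorbit$ via a weighted Poincar\'e argument after rescaling. The leading contribution to $\pr\fnMorawetzSC$ comes from differentiating the $\arctan$, giving $\pr\fnMorawetzSC\approx M\modspectralcollection(1+\rescaledr^2)^{-1}\chiTworfar$; contributions from $\pr$ acting on $\chiTworfar$ or on slowly varying factors are either lower order in $\modspectralcollection^{\ExponentRefined}$ or supported in $2\rfar\le |r-\rorbit|\le 3\rfar$, away from $\rorbit$.

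For the main positivity, lemma \ref{lemma:PropertiesOfDCurlyRT} gives $\DCurlyRT\approx -2(r-\rorbit)r^{-4}\modspectralcollection^2$ to leading order, so that
\[
\mathcal{U}=-\tfrac{1}{2}\DCurlyRT\fnMorawetzSC\approx \frac{M^{4}}{r^{4}}\,M^{-1}\modspectralcollection^{\ExponentRefinedThree}\,\rescaledr\arctan(\rescaledr)\chiTworfar,
\]
which is pointwise nonnegative because $\arctan(\rescaledr)$ and $(r-\rorbit)$ share their sign. In the \emph{outer} region $|\rescaledr|\gtrsim 1$, still inside $\chiTworfar$, the factor $\rescaledr\arctan(\rescaledr)$ is bounded below by a positive constant, and $\mathcal{U}$ alone gives $\mathcal{U}\gtrsim M^{-1}\modspectralcollection^{\ExponentRefinedThree}$ pointwise.

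In the \emph{inner} region $|\rescaledr|\lesssim 1$, $\mathcal{U}$ is too small on its own, and I would combine it with $\mathcal{A}\approx\tfrac{1}{2}\KDelta\fnMorawetzSA\cdot M\modspectralcollection(1+\rescaledr^2)^{-1}$, of order $M\modspectralcollection(1+\rescaledr^2)^{-1}$ on the support of $\chiTworfar$. After substituting $y=\rescaledr$, so that $dr=M\modspectralcollection^{-\ExponentRefined}dy$ and $|\pr\soluS|^2=\modspectralcollection M^{-2}|\partial_y\soluS|^2$, the contribution of $\mathcal{A}|\pr\soluS|^2$ rescales to a definite multiple of $\modspectralcollection^{\ExponentRefinedThree}(1+y^2)^{-1}|\partial_y\soluS|^2\,dy$, while the target $M^{-1}\modspectralcollection^{\ExponentRefinedThree}|\soluS|^2\,dr$ rescales to $\modspectralcollection|\soluS|^2\,dy$. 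A weighted Poincar\'e inequality on the bounded interval $|y|\le K$ then closes the estimate in the high-frequency regime $\modspectralcollection\gtrsim 1$; for $\modspectralcollection\lesssim 1$ the target is directly dominated by $\BulkFITwo$ (using the $\chirfar$-free $|\pAng\solu|^2$ and $|\pt\solu|^2$ weights), which also absorbs the boundary values at $|y|=K$, corresponding to $|r-\rorbit|\sim KM\modspectralcollection^{-\ExponentRefined}$ lying inside the non-degenerate region of $\chirfar$ once $K$ is chosen in terms of $\rfar$.

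The main expected obstacle is the $\mathcal{V}$ term from lemma \ref{lemma:SimplifiedAUV}. Its first piece $-\pr(\fnMorawetzSA\potlFIz)\fnMorawetzSC$ scales like $M^{-1}\modspectralcollection^{\ExponentRefined}$ and is harmless, but $-\tfrac{1}{2}\pr\KDelta\pr\fnqS$ carries a contribution with two derivatives on $\arctan(\rescaledr)$ which is of order $M^{-1}\modspectralcollection^{\ExponentRefinedThree}$, the \emph{same} order as the target and of indefinite sign. I would handle it by observing that this bad contribution is odd in $\rescaledr$, with the structure $\rescaledr(1+\rescaledr^2)^{-2}$, so that an integration by parts in $r$ (transferring one $\pr$ onto $|\soluS|^2$) converts it into either a boundary term at the edge of the near region, absorbable into $\BulkFITwo$, or an interior term of reduced size $M^{-1}\modspectralcollection^{\ExponentRefined}$ absorbable into $\BulkFILowerOrder$. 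The $\BulkSIm$ and $\EnergyS$ contributions are controlled as in lemma \ref{lemma:BasicMorawetzEnergy}, producing the $\EnergyCrudeFI(\FinalTime)+\EnergyCrudeFI(0)$ factors on the right.
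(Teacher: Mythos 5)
The key structural insight you are missing is that $\mathcal{V}$ is not an error term to be worked around: it is the \emph{source} of positivity in the inner region. With $\fnqS=\tfrac12\fnMorawetzSA\pr\fnMorawetzSC$, the term $-\tfrac12\pr\KDelta\pr\fnqS$ in $\mathcal{V}$ carries up to \emph{three} derivatives on $\arctan(\rescaledr)$ (one already from $\fnqS$, two more from $\pr\KDelta\pr$), not two as you state. The three-derivative contribution is $-\tfrac14\KDelta\fnMorawetzSA\,\pr^3_r\arctan(\rescaledr)$ (times $M^2\modspectralcollection^{\ExponentRefined}\chiTworfar$), and since $\arctan'''(y)=(6y^2-2)(1+y^2)^{-3}$ is \emph{negative} near $y=0$, this contribution is \emph{positive} there, of the highest available order in $\modspectralcollection$, and dominates the lower-derivative pieces for $|\rescaledr|\le 1$. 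For $|\rescaledr|\ge 1$ it is $\mathcal{U}\sim|\DCurlyRT|\arctan(\rescaledr)$ that takes over, since $\rescaledr\arctan(\rescaledr)\gtrsim 1$. The paper thus obtains a \emph{pointwise} bound $\chiTworfar(\mathcal{U}+\mathcal{V}+CM^{-1})\ge \chiTworfar M^{-1}\modspectralcollection^{\ExponentRefinedThree}$; no Poincar\'e inequality or integration by parts is needed, and $\mathcal{A}$ is only used for its nonnegativity.

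Your alternative route, as written, has a genuine gap. The weighted Poincar\'e inequality on $|y|\le K$ requires control of the boundary values $|\soluS|^2$ at $|y|=K$, i.e.\ at $|r-\rorbit|\sim KM\modspectralcollection^{-\ExponentRefined}$. As $\modspectralcollection\to\infty$ this radius shrinks to zero and so lies \emph{inside} the region $\{|r-3M|<\rfar\}$, precisely where $\chirfar$ vanishes; this is the opposite of your claim that it lands ``inside the non-degenerate region of $\chirfar$'', and the $\chirfar$-weighted parts of $\BulkFITwo$ therefore give no control there. To salvage this one would have to borrow positivity from the $\mathcal{U}$ term in an annulus $|y|\sim 1$ via an averaged-trace or mean-value argument, which is not sketched. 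Likewise, the integration by parts you propose for the ``bad'' $\mathcal{V}$ contribution is aimed at the two-derivative piece with structure $\rescaledr(1+\rescaledr^2)^{-2}$, but the dominant three-derivative piece has structure $(6\rescaledr^2-2)(1+\rescaledr^2)^{-3}$, is even rather than odd, scales one power of $\modspectralcollection^{\ExponentRefined}$ higher than you estimate, and is exactly the term whose favorable sign the paper uses. Treating it as an error to be integrated away does not close; recognizing its sign does.
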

\begin{proof}
The integral in $\BulkSMain$ consists of three terms, involving $\mathcal{A}$, $\mathcal{U}$, and $\mathcal{V}$. The $\mathcal{U}$ term is dominant for $\modrescaledr\geq 1$, and the $\mathcal{V}$ term is dominant for $\modrescaledr\leq 1$. The exponent $\ExponentRefined$ is chosen as in \cite{BlueSoffer:LongPaper}. Although it will not be useful in this paper, further refinements can estimate $\int M^{-1} \modspectralcollection^{2-\epsilon}|\soluS|^2 \diFourSpectral$ for any $\epsilon>0$ by using a technical argument that also uses a Fourier transform in the $r$ variable \cite{BlueSoffer:LongPaper}. 

Because of the localising factor $\chiTworfar$, it is not necessary to track asymptotic behaviour near $r=\rp$ or $r=\infty$. Similarly, since all the factors are bounded, and all the factors except $\arctan(\rescaledr)$ have bounded derivatives, the only terms that might fail to be bounded by $|\soluS|^2$ or $\chirfar\modspectralcollection^2|\soluS|^2$ arise in $\mathcal{A}$ when the arctangent is differentiated, in $\mathcal{U}$, and in $\mathcal{V}$ when $\fnqS$ is differentiated twice. By inverting the spectral transform, the integral of $\chiThreerfar M^{-1} |\soluS|^2$ can be decomposed into the intervals $[-M,0]$, $[0,\FinalTime]$, and $[\FinalTime,\FinalTime+M]$, where it is bounded by $\EnergyCrudeFI(0)$, $\BulkFILowerOrder$, and $\EnergyCrudeFI(\FinalTime)$ respectively. Similarly, the integral of $\chirfar\chiThreerfar\modspectralcollection^2|\soluS|^2$ can be bounded by $\EnergyCrudeFI(0)$, $\BulkFITwo$, and $\EnergyCrudeFI(\FinalTime)$. 

The relevant term from $\mathcal{A}$ is
\begin{align*}
\frac12M^2 \KDelta\fnMorawetzSA\chiTworfar(\pr\arctan(\rescaledr)) |\pr\soluS|^2 .
\end{align*}
This is nonnegative, since the arctangent function is increasing and all the other functions are nonnegative. 

The $\mathcal{U}$ term is
\begin{align*}
\mathcal{U}|\soluS|^2
&=-\frac12\chiTworfar\DCurlyRT\arctan(\rescaledr) |\soluS|^2.
\end{align*}
Both $-\DCurlyRT$ and $\arctan(\rescaledr)$ have a unique root at $r=\rorbit$ and go from negative to positive. Thus, from the approximation of $\DCurlyRT$ in lemma \ref{lemma:PropertiesOfDCurlyRT}, one finds
\begin{align*}
\mathcal{U}|\soluS|^2
&=\frac12M^2\chiTworfar \left|\DCurlyRT\right|\arctan(\rescaledr)| |\soluS|^2 \\
&\gtrsim M^{-1} \chiTworfar\modspectralcollection^{2-\ExponentRefined}\rescaledr\arctan(\rescaledr) . 
\end{align*}
Since $x\arctan x\geq 1$, for $|x|\geq 1$, this estimate is quite strong in that it dominates $\modspectralcollection^{2-\ExponentRefined}$ for $\modrescaledr\geq 1$. 

Because $\fnqS$ involves the derivative of $\fnMorawetzSC$, and the $\mathcal{V}$ term involves two derivatives of $\fnqS$, there are up to three derivatives of $\arctan(\rescaledr)$. (Regardless of whether the derivative is applied to $\arctan(\rescaledr)$ or some other term, all the terms in $\mathcal{V}$ have a factor of $M^{-1}$.) The $k$th derivative is of the form $M^{-1} \modspectralcollection^{\ExponentRefinedk} (1+(\rescaledr)^2)^{-(k+1)/2}$. In the remainder of this paragraph, we use $A$ dominates $B$ to mean that for any constant $C_1$ there is a $C_2$ such that $|A|\leq C_2 +B/C_1$. Using this terminology, for $\modrescaledr\leq  1$, the third derivative dominates the zeroth, first, and second derivatives, and for $\modrescaledr\geq 1$, all the terms are dominated by $\modspectralcollection^{\ExponentRefinedThree} = \modspectralcollection^{2-\ExponentRefined}$, which is the factor arising in $\mathcal{U}$. Thus, for $\rescaledr$ small, the third derivative term dominates, and for $\rescaledr$ large, all terms are dominated by the term arising from $\mathcal{U}$. Thus, 
\begin{align*}
\chiTworfar (\mathcal{U}+\mathcal{V}+CM^{-1})|\soluS|^2 &\geq \chiTworfar \modspectralcollection^{\ExponentRefinedThree}|\soluS|^2 M^{-1}.
\end{align*}
The $CM^{-1} \chiTworfar |\soluS|^2$ term can be estimated by $\EnergyCrudeFI(0)+\EnergyCrudeFI(\FinalTime)+\BulkFILowerOrder$, as explained earlier in this proof. 

Combining these, one obtains the desired result. 
\end{proof}

\begin{lemma}[Bound on the $\BulkSIm$ and $\EnergyS$ terms for the refined Morawetz estimate]
\hypothesisF,
\hypothesisFsoluS,
then
\begin{align*}
&|\BulkSIm|+|\EnergyS|\\
&\lesssim \EnergyCrudeFI(\FinalTime)+\EnergyCrudeFI(0)+\BulkFILowerOrder +\BulkFITwo \\
&\quad +\int_{\fullspectralspace} \chiTworfar M^{-1} \modspectralcollection |\soluS|^2\diFourSpectral  .
\end{align*}
\end{lemma}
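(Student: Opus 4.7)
The plan is to adapt the proof of the corresponding bound for the basic Morawetz estimate (Lemma \ref{lemma:BasicMorawetzEnergy}), exploiting the fact that the refined multipliers are compactly supported in $|r-3M|\leq 3\rfar$ via $\chiTworfar$, even though they no longer admit a spectrum-independent approximation. In the support of $\chiTworfar$, the factorizations $\fnMorawetzSA\sim M^{-2}$, $\fnMorawetzSB=1$, and $\fnMorawetzSC = M^2\modspectralcollection^{1/2}\arctan(\rescaledr)\chiTworfar$ give the pointwise bounds
\begin{align*}
|\fnMorawetzS| &\lesssim \chiThreerfar\,\modspectralcollection^{1/2}, &
|\fnqS| &\lesssim \chiThreerfar\,M^{-1}\modspectralcollection,
\end{align*}
since each application of $\pr$ to $\arctan(\rescaledr)$ adds a factor of $\modspectralcollection^{1/2}/M$, with the $(1+(\rescaledr)^2)^{-1}$ factor being bounded.

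For $\BulkSIm$, Young's inequality with weight $M$ yields
\begin{align*}
|\fnMorawetzS(\pr\soluSb)\ErrorSIm|
\lesssim \delta M\chiThreerfar|\pr\soluS|^2 + \delta^{-1}M^{-1}\chiThreerfar\modspectralcollection|\ErrorSIm|^2.
\end{align*}
By Plancherel, the first term integrates to $\delta\int M\chiThreerfar|\pr\solu|^2\ChiT^2\diFourFI$; splitting the $t$-integral at $t=0$ and $t=\FinalTime$ bounds it by a multiple of $\BulkFITwo+\EnergyCrudeFI(0)+\EnergyCrudeFI(\FinalTime)$ via Corollary \ref{cor:TrivialEnergyBound}. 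For the second term, since $\ErrorSIm$ is the transform of the $t$- and $\phi$-independent coefficient $(\potlFIz-\KSigma\potlFI)$ of size $|a|/M^2$ times $\soluCut$, the interpolation $\modspectralcollection\lesssim 1+\modspectralcollection^2$ combined with Plancherel converts the spectral weight into at most one $\pt,\pp,$ or $\pAng$ derivative of $(\potlFIz-\KSigma\potlFI)\soluCut$. The $a^2$-smallness of the coefficient then makes this term negligible against $\BulkFILowerOrder+\EnergyCrudeFI(0)+\EnergyCrudeFI(\FinalTime)$.

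For $\EnergyS$, the same Young decomposition applies to $\fnMorawetzS(\pr\soluSb)\ErrorSCut$, and an analogous one, weighted by $\epsilon M^{-1}\modspectralcollection\chiThreerfar$, applied to $\fnqS\soluSb\ErrorSCut$ produces exactly the absorption term $\epsilon\int\chiTworfar M^{-1}\modspectralcollection|\soluS|^2\diFourSpectral$ on the stated right-hand side. The remaining $|\ErrorSCut|^2$-weighted contributions are controlled by the support of $\ErrorSCut$ in the ``ear'' intervals $t\in[-M,0]\cup[\FinalTime,\FinalTime+M]$ together with $|\pt^k\ChiT|\lesssim M^{-k}$: after $\modspectralcollection\lesssim 1+\modspectralcollection^2$ and Plancherel, one integrates by parts in $t$ to shift any second time derivative of $\solu$ onto $\ChiT$, producing only first-order quantities in $\solu$; these are then bounded on the ears by $\EnergyCrudeFI(0)+\EnergyCrudeFI(\FinalTime)$ via Corollary \ref{cor:TrivialEnergyBound}.

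The principal technical obstacle is the fractional weight $\modspectralcollection^{1/2}$ coming from $|\fnMorawetzS|$; literally interpreting it as a half-derivative in $(t,\phi,\theta)$ would demand second-order regularity of $\solu$ that is not available uniformly. Replacing it with the AM--GM bound $\modspectralcollection\lesssim 1+\modspectralcollection^2$ and then integrating by parts against the rapidly localized derivatives of $\ChiT$ to avoid $\pt^2\solu$ terms is the key device that closes the estimate without appealing to higher-order energy norms.
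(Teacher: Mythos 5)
Your plan diverges from the paper's at the key technical step, and the divergence opens a genuine gap. In your Young's inequality you place the spectral weight coming from $|\fnMorawetzS|\lesssim\modspectralcollection^{1/2}\chiTworfar$ on the error side, producing $\delta^{-1}M^{-1}\chiThreerfar\modspectralcollection|\ErrorSIm|^2$ (and analogously for $\ErrorSCut$), and then interpolate $\modspectralcollection\lesssim 1+\modspectralcollection^2$ globally. This does not close near the orbiting null geodesics. After Plancherel and integration by parts, the $\modspectralcollection^2$ piece becomes (up to commutator corrections) $a^2\int\chiThreerfar\bigl(\epsilondtsquared^2|\pt\soluCut|^2+|\pAng\soluCut|^2\bigr)\diFourFI$, a full first-derivative quantity supported on $|r-\rorbit|\lesssim 4\rfar$. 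That region contains $\{|r-3M|<\rfar\}$, where $\chirfar$ vanishes, so neither $\BulkFILowerOrder$ (which has no derivatives of $\solu$) nor $\BulkFITwo$ (whose time and angular derivative terms carry the factor $\chirfar$) controls it; the $a^2$-smallness is irrelevant, since a small multiple of an uncontrolled quantity is still uncontrolled. The absorption term $\int\chiTworfar M^{-1}\modspectralcollection|\soluS|^2$ the lemma permits, and which you invoke for the $\fnqS$-piece of $\EnergyS$, is only a half-derivative in $L^2$ and cannot majorize a full derivative either. The analogous step for $\ErrorSCut$ is worse: since $\ErrorSCut$ already contains one factor of $\pt\solu$, the diagonal term $|\pt^2\solu|^2|\pt\ChiT|^2$ in $\modspectralcollection^2|\ErrorSCut|^2$ cannot be removed by integration by parts against $\ChiT$, contrary to what your closing paragraph asserts.

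The paper avoids both problems by making the opposite choice in Young's inequality: the $\modspectralcollection$ weight stays with $|\pr\soluS|^2$ and $|\soluS|^2$, so that $|\ErrorSIm|^2$ and $|\ErrorSCut|^2$ appear with no spectral weight at all, and the error terms are then handled by compact support in the ``ears'' and the crude energy. The remaining weighted bulk integral is treated by the decomposition $\chiThreerfar=\chiTworfar+(1-\chiTworfar)\chiThreerfar$. On $\chiTworfar$ nothing is estimated: the $\int\chiTworfar M^{-1}\modspectralcollection|\soluS|^2$ piece is precisely the absorption term on the right-hand side, to be absorbed later against the $\modspectralcollection^{\ExponentRefinedThree}$ term from $\BulkSMain$. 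On the annulus $(1-\chiTworfar)\chiThreerfar$, where $\chirfar$ is bounded below, the interpolation $\modspectralcollection\lesssim 1+\modspectralcollection^2$ is permissible because there $\BulkFITwo$ controls $\modspectralcollection^2|\soluS|^2$, and the observation $(1-\chiTworfar)\chiThreerfar\rfar^2\lesssim(r-\rorbit)^2$ is what enables that. Your proposal never introduces this $\chiTworfar$ / $(1-\chiTworfar)\chiThreerfar$ split, which is the essential device that makes the right-hand side of the lemma correct.
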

\begin{proof}
Since $\fnMorawetzS$ and $\fnqS$ are both supported on $|r-\rorbit|<3\rfar$, we can introduce an additional cut-off $\chiThreerfar$ that is identically one for $|r-\rorbit|<3\rfar$, identically $0$ for $|r-\rorbit|>4\rfar$, monotone in between, and having for $k\in\Naturals:\pr^k\chiThreerfar<\rfar^{-k}$. Thus, 
\begin{align*}
|\BulkSIm|
&= \int_{\fullspectralspace} (\fnMorawetzS (\pr\soluSb)+\fnqS\soluSb)\chiThreerfar \ErrorSIm \diFourSpectral \\
&\lesssim \int_{\fullspectralspace} M^{-1}|\fnMorawetzS|^2|\pr\soluS|^2 +M^{-1}|\fnqS|^2|\soluS|^2 + M\chiThreerfar|\ErrorSIm|^2 \diFourSpectral .
\end{align*}
Similarly, 
\begin{align*}
|\EnergyS|
&= \int_{\fullspectralspace} (\fnMorawetzS (\pr\soluSb)+\fnqS\soluSb)\chiThreerfar \ErrorSCut \diFourSpectral \\
&\lesssim \int_{\fullspectralspace} M^{-1}|\fnMorawetzS|^2|\pr\soluS|^2 +M^{-1}|\fnqS|^2|\soluS|^2 +M\chiThreerfar|\ErrorSCut|^2 \diFourSpectral .
\end{align*}
The terms involving $|\fnMorawetzS|^2|\pr\soluS|^2$, $|\ErrorSIm|^2$, and $|\ErrorSCut|^2$ can be estimated using the compact support of $\chiThreerfar$ and the boundedness of $\fnMorawetzS$:
\begin{align*}
\int_{\fullspectralspace} M |\fnMorawetzS|^2|\pr\soluSb|^2 \diFourSpectral
&\lesssim \EnergyCrudeFI(\FinalTime)+\EnergyCrudeFI(0)+\BulkFITwo , \\
\int_{\fullspectralspace} \chiThreerfar|\ErrorSIm|^2\diFourSpectral 
&\lesssim \int_{\Exterior} M^{-1} \chiThreerfar |\soluCut|^2\diFourFI \\
&\lesssim \int_{\fullspectralspace} M^{-1} \chiThreerfar |\soluS|^2\diFourSpectral\\
&\lesssim \EnergyCrudeFI(\FinalTime)+\EnergyCrudeFI(0)+\BulkFILowerOrder ,\\
\int_{\fullspectralspace} M^{-1} \chiThreerfar|\ErrorSCut|^2\diFourSpectral
&\lesssim\int_{\Exterior}M^3\chiThreerfar (|\pt\ChiT|^2|\vecTperp\solu|^2 \diFourFI\\
&\quad+\int+M^3|\pt^2\ChiT|^2|\solu|^2)\diFourFI \\
&\lesssim \EnergyCrudeFI(\FinalTime)+\EnergyCrudeFI(0) .
\end{align*}
The term involving $|\fnqS|^2|\soluS|^2$ can be estimated using the bound $|\fnqS|\lesssim M^{-1}\modspectralcollection^{\ExponentRefined}\chiThreerfar$ and 
\begin{align*}
\int_{\fullspectralspace}& M|\fnqS|^2|\soluS|^2 \diFourSpectral\\
&\lesssim \int_{\fullspectralspace} M^{-1}\modspectralcollection \chiThreerfar |\soluS|^2\diFourSpectral \\
&\lesssim  \int_{\fullspectralspace} M^{-1} \modspectralcollection \chiTworfar \chiThreerfar |\soluS|^2\diFourSpectral \\
&\quad+\int_{\fullspectralspace} M^{-1} \modspectralcollection (1-\chiTworfar) \chiThreerfar |\soluS|^2\diFourSpectral
\end{align*}
Because $(1-\chiTworfar)\chiThreerfar \rfar^2\lesssim (r-\rorbit)^2$, from the Cauchy-Schwarz inequality, one finds
\begin{align*}
\int &M^{-1}(1-\chiTworfar)\chiThreerfar \modspectralcollection|\soluS|^2\diFourSpectral\\
&\leq\int M^{-1}(1-\chiTworfar)\chiThreerfar \left(|\soluS|^2 +\modspectralcollection^2|\soluS|^2\right)\diFourSpectral\\
&\lesssim \EnergyCrudeFI(\FinalTime)+\EnergyCrudeFI(0) +\BulkFILowerOrder+\BulkFITwo .
\end{align*}
\end{proof}

\begin{lemma}[Refined spectral Morawetz estimate]
\hypothesisC,
\hypothesisFsoluS,
then
\begin{align*}
\int_{\fullspectralspace} \chiTworfar \modspectralcollection |\soluS|^2\diFourSpectral 
&\lesssim \EnergyCrudeFI(\FinalTime)+\EnergyCrudeFI(0) +\BulkFILowerOrder+\BulkFITwo . 
\end{align*}
\end{lemma}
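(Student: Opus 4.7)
The plan is to combine the two preceding lemmas with the energy generation identity from Theorem \ref{thm:EnergyGenerationSpectral}, and then absorb a lower-order spectral term via a Young-type split on the size of $\modspectralcollection$.

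First, I would use the energy generation identity $\EnergyS = \BulkSMain + \BulkSIm$ to write $\BulkSMain = \EnergyS - \BulkSIm$, so that the preceding two lemmas yield
\begin{align*}
\int_{\fullspectralspace} \chiTworfar M^{-1}\modspectralcollection^{\ExponentRefinedThree}|\soluS|^2 \diFourSpectral
&\lesssim |\EnergyS| + |\BulkSIm| + C\bigl(\EnergyCrudeFI(\FinalTime)+\EnergyCrudeFI(0) +\BulkFILowerOrder+\BulkFITwo\bigr) \\
&\lesssim \EnergyCrudeFI(\FinalTime)+\EnergyCrudeFI(0) +\BulkFILowerOrder+\BulkFITwo
  + \int_{\fullspectralspace} \chiTworfar M^{-1}\modspectralcollection |\soluS|^2\diFourSpectral .
\end{align*}
So everything reduces to absorbing the last integral into the left-hand side, modulo controllable remainders.

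To absorb it, I would fix a large constant $K$ (to be chosen at the end) and split the spectral integration into the regions $\{\modspectralcollection \ge K\}$ and $\{\modspectralcollection < K\}$. On the first region, $\modspectralcollection \le K^{-1/2}\modspectralcollection^{\ExponentRefinedThree}$, so this part is bounded by $K^{-1/2}$ times the left-hand side above and, for $K$ large, can be absorbed. On the second region, $\modspectralcollection \le K$, so the integral is bounded by $K \int \chiTworfar M^{-1}|\soluS|^2 \diFourSpectral$; inverting the spectral transform turns this into a physical-space integral of $\chiTworfar M^{-1}|\soluCut|^2$ over $\Exterior$, which I would split according to the three $t$-intervals $[-M,0]$, $[0,\FinalTime]$, $[\FinalTime,\FinalTime+M]$. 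On the middle interval the trivial estimate $\soluCut=\solu$ gives a contribution bounded by $\BulkFILowerOrder$, and on the two end intervals Corollary \ref{cor:TrivialEnergyBound} (together with the bounded time length $M$) yields contributions bounded by $\EnergyCrudeFI(0) + \EnergyCrudeFI(\FinalTime)$.

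Combining these, one obtains the bound
\begin{align*}
\int_{\fullspectralspace}\chiTworfar M^{-1} \modspectralcollection^{\ExponentRefinedThree}|\soluS|^2 \diFourSpectral
\lesssim \EnergyCrudeFI(\FinalTime)+\EnergyCrudeFI(0) +\BulkFILowerOrder+\BulkFITwo ,
\end{align*}
and since $\modspectralcollection \lesssim 1+\modspectralcollection^{\ExponentRefinedThree}$, the same bound holds with $\modspectralcollection^{\ExponentRefinedThree}$ replaced by $\modspectralcollection$, the constant contribution being absorbed into $\BulkFILowerOrder$ and the endpoint energies as in the previous paragraph. This proves the stated estimate.

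The main obstacle is the circularity between the two preceding lemmas: the lower bound on $\BulkSMain$ controls $\modspectralcollection^{\ExponentRefinedThree}$ while the upper bound on $|\BulkSIm|+|\EnergyS|$ only loses $\modspectralcollection$, and the gap between these two powers is exactly what makes the Young-type absorption work. The choice of exponent $\ExponentRefined$ in $\fnMorawetzSC$ is tuned so that this half-power gap is available, as in \cite{BlueSoffer:LongPaper}; once this split is set up, the remaining work is the routine verification that the bounded-frequency piece is controlled by the lower-order bulk and boundary energies, which follows directly from the $L^2$ isometry property of the spectral transform and the localization of $\ChiT$.
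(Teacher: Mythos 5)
Your proof is correct and follows essentially the same route as the paper's: both combine the two preceding lemmas with the energy generation identity, then absorb the $\modspectralcollection$-weighted error into the $\modspectralcollection^{\ExponentRefinedThree}$-weighted term by a Young-type interpolation (your explicit frequency split at $K$ is just an unpacked version of the paper's inequality $\modspectralcollection \lesssim C_2 + \modspectralcollection^{\ExponentRefinedThree}/(2C)$), and finally control the residual low-frequency term $\int \chiTworfar M^{-1}|\soluS|^2$ by inverting the spectral transform and splitting on the intervals $[-M,0]$, $[0,\FinalTime]$, $[\FinalTime,\FinalTime+M]$.
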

\begin{proof}
From the previous lemmas and the energy generation formula for the spectral transform, one finds
\begin{align*}
\int_{\fullspectralspace}& M^{-1} \chiTworfar \modspectralcollection^{\ExponentRefinedThree}|\soluS|^2\diFourSpectral \\
&\lesssim \EnergyCrudeFI(\FinalTime)+\EnergyCrudeFI(0)+\BulkFILowerOrder+\BulkFITwo \\
&\quad+\int_{\fullspectralspace}M^{-1} \chiTworfar\modspectralcollection|\soluS|^2\diFourSpectral  .
\end{align*}
Since there is a $C_2$ such that $\modspectralcollection \lesssim C_2 +\modspectralcollection^{\ExponentRefinedThree}/(2C)$, where $C$ is the implicit constant in the previous equation, one finds
\begin{align*}
\int_{\fullspectralspace}& M^{-1}\chiTworfar \modspectralcollection^{\ExponentRefinedThree}|\soluS|^2\diFourSpectral\\ 
&\leq C\left(\EnergyCrudeFI(\FinalTime)+\EnergyCrudeFI(0)+\BulkFILowerOrder+\BulkFITwo\right)\\
&\quad +2C_2\int_{\fullspectralspace} M^{-1}\chiTworfar|\soluS|^2\diFourSpectral  \\
&\quad +\frac12 \int_{\fullspectralspace} M^{-1} \chiTworfar\modspectralcollection^{\ExponentRefinedThree}|\soluS|^2\diFourSpectral ,
\end{align*}
from which the desired estimate follows. 
\end{proof}

\begin{prop}[Refined Morawetz estimate, core estimate \ref{eq:CoreV}]
\hypothesisC,
\hypothesisFsoluS,
then
\begin{align*}
\BulkFIOne
&\lesssim \EnergyCrudeFI(\FinalTime)+\EnergyCrudeFI(0) +\BulkFILowerOrder+\BulkFITwo .
\end{align*}
\end{prop}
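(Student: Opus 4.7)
The plan is to invert the spectral transform in the preceding refined spectral Morawetz lemma, following the scheme used in proposition \ref{prop:BasicMorawetzFI}. The key observation is that, by Plancherel applied to the joint spectral transform in $(i\pt, i\pp, -\OpQ)$, for each fixed $r\in(\rp,\infty)$,
\begin{align*}
\int_{\Reals}\int_{\Stwo}\soluCutb\,\pt\soluCut\,\diomega\,\di t = i\int_{\spectralspace}\spectralt\,|\soluS|^2\,\dispectral .
\end{align*}
Taking imaginary parts and using the elementary bound $\epsilondtsquared|\spectralt|\leq\modspectralcollection$ yields
\begin{align*}
\left|\int_{\Reals}\int_{\Stwo}\Im(\soluCutb\pt\soluCut)\diomega\,\di t\right|
\leq \frac{1}{\epsilondtsquared}\int_{\spectralspace}\modspectralcollection\,|\soluS|^2\,\dispectral ,
\end{align*}
and multiplying by $(1-\chirfar)\leq\chiTworfar$, integrating in $r$, and invoking the previous lemma controls the ``full'' version of $\BulkFIOne$ (with the $t$-integral running over all of $\Reals$) by $\EnergyCrudeFI(\FinalTime)+\EnergyCrudeFI(0)+\BulkFILowerOrder+\BulkFITwo$; the factor $1/\epsilondtsquared$ is absorbed into the implicit constant since $\epsilondtsquared/M$ is treated as fixed throughout this subsection.

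It remains to replace $\soluCut$ by $\solu$ and reduce the $t$-integration from $\Reals$ to $[0,\FinalTime]$. Using $\ChiT\equiv 1$ on $[0,\FinalTime]$, the vanishing of $\soluCut$ outside $[-M,\FinalTime+M]$, and the identity $\Im(\soluCutb\pt\soluCut)=\ChiT^2\Im(\solub\pt\solu)$ (which follows since $(\pt\ChiT)\ChiT|\solu|^2$ is real), one has
\begin{align*}
\int_0^{\FinalTime}\!\!\int\Im(\solub\pt\solu)\diomega\,\di t
= \int_{\Reals}\!\!\int\Im(\soluCutb\pt\soluCut)\diomega\,\di t
- \int_{[-M,0]\cup[\FinalTime,\FinalTime+M]}\!\!\int\ChiT^2\Im(\solub\pt\solu)\diomega\,\di t.
\end{align*}
For each of the two end-interval error terms, multiplied by $(1-\chirfar)$ and integrated in $r$, one applies the weighted Young inequality $|ab|\leq \tfrac{1}{2M}|a|^2+\tfrac{M}{2}|b|^2$ with $a=\solu$, $b=\pt\solu$. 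On the compact $r$-support of $(1-\chirfar)$, the quantities $M^{-1}|\solu|^2\,\di r\diomega$ and $M|\pt\solu|^2\,\di r\diomega$ are each dominated, up to constants depending on $\rfar/M$, by $M^{-1}$ times the density of $\EnergyCrudeFI$; integration in $t$ over an interval of length $M$ then bounds the contribution by $\sup\EnergyCrudeFI$ on that interval, which corollary \ref{cor:TrivialEnergyBound} controls in turn by $\EnergyCrudeFI(0)+\EnergyCrudeFI(\FinalTime)$.

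Combining the main-term bound of the first paragraph with the error-term bounds above gives the stated estimate. The principal subtlety is the choice of Young weights: the factors $1/M$ and $M$ are tuned to match the Kerr-scale coefficients $r^{-2}$ of $|\solu|^2$ and $(r^2+a^2)\KDelta^{-1}$ of $|\pt\solu|^2$ in the density of $\EnergyCrudeFI$, so that the error terms absorb cleanly without any loss in the small parameter $|a|/M$ once $\epsilondtsquared/M$ and $\rfar/M$ have been fixed.
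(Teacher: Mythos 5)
Your proposal is correct and follows essentially the same route as the paper's own proof: cut off in time, control the boundary-interval errors by $\EnergyCrudeFI(0)$ and $\EnergyCrudeFI(\FinalTime)$ via corollary \ref{cor:TrivialEnergyBound}, pass to the spectral side where $\pt$ acts as multiplication by $-i\spectralt$, use $\epsilondtsquared|\spectralt|\le\modspectralcollection$, and invoke the refined spectral Morawetz lemma with the fixed $M/\epsilondtsquared$ factor absorbed. You are actually a bit more explicit than the paper (e.g.\ the identity $\Im(\soluCutb\pt\soluCut)=\ChiT^2\Im(\solub\pt\solu)$ and the cut-off comparison $(1-\chirfar)\le\chiTworfar$, the latter holding after a harmless widening of $\chiTworfar$ to absorb $|\rorbit-3M|\lesssim\smallparameter M$), but the argument is the same.
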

\begin{proof}
Recall
\begin{align*}
\BulkFIOne
&=\int_{\rp}^\infty \chirfar\left|\int_0^\FinalTime \int_{\Stwo} \Im(\solub\pt\solu) \diomega\di t\right| \di r.
\end{align*}
In the time interval $[0,\FinalTime]$, the function $\solu$ can be replaced by $\soluCut$. The integrals of this integrand over the time intervals $[-M,0]$ and $[\FinalTime,\FinalTime+M]$ are bounded by $\EnergyCrudeFI(0)$ and $\EnergyCrudeFI(0)$ respectively, so that
\begin{align*}
\BulkFIOne
\lesssim \int_{\rp}^\infty \chirfar\left|\int_\Reals \int_{\Stwo} \Im(\soluCutb\pt\soluCut) \diomega\di t\right| \di r +\EnergyCrudeFI(\FinalTime)+\EnergyCrudeFI(0)  .
\end{align*}
Applying the spectral transform and observing that $\modspectralcollection\lesssim 1+\modspectralcollection^{\ExponentRefinedThree}$, one finds
\begin{align*}
\BulkFIOne
&\lesssim \int_{\rp}^\infty \chirfar\left|\int_{\spectralspace} M^{-1} \modspectralcollection|\soluS|^2 \dispectral\right| \di r +\EnergyCrudeFI(\FinalTime)+\EnergyCrudeFI(0) \\
& \lesssim \int_{\rp}^\infty \chirfar\left|\int_{\spectralspace} M^{-1}(1+\modspectralcollection^{\ExponentRefinedThree})|\soluS|^2 \dispectral\right| \di r\\
&\qquad +\EnergyCrudeFI(\FinalTime)+\EnergyCrudeFI(0) \\
&\lesssim \EnergyCrudeFI(\FinalTime)+\EnergyCrudeFI(0)+\BulkFILowerOrder+\BulkFITwo .
\end{align*}
\end{proof}

Now that core estimates \eqref{eq:CoreI}-\eqref{eq:CoreV} have been proved, this completes the proof of theorems \ref{thm:EnergyBound} and \ref{thm:Morawetz}. 

\subsection*{Acknowledgements}
We are grateful to Steffen Aksteiner, Dietrich H\"a{}fner, Lionel
Mason, and Jean-Philippe Nicolas for valuable discussions. We would
like to thank MSRI, where the paper was completed, for support and
hospitality. P.B. was funded by CANPDE, which was funded by an EPSRC
Science and Innovation grant and by EPSRC grant EP/J011142/1. LA was
supported in part by the Knut and Alice Wallenberg Foundation, under
a contract to the Royal Institute of Technology, Stockholm, Sweden.

\bibliographystyle{plain}

\bibliography{FourierKerrMaxwell}

\end{document}